\def\E{{\mathbb{E}}}
\def\N{{\mathbb{N}}}
\def\P{{\mathbb{P}}}
\def\R{{\mathbb{R}}}
\def\Z{{\mathbb{Z}}}
\newcommand{\8}{\infty}
\renewcommand{\d}{\delta}
\renewcommand{\a}{\alpha}
\renewcommand{\b}{\beta}
\renewcommand{\l}{\lambda}
\newcommand{\e}{{\mathrm{e}}}
\newcommand{\eps}{\varepsilon}
\newcommand{\g}{\gamma}
\newcommand{\s}{\sigma}
\newcommand{\ov}{\overline}
\newcommand{\wh}{\widehat}
\renewcommand{\th}{\theta}
\newcommand{\RR}{\mathbb{R}}
\newcommand{\Ind}{\mathbf{1}_}
\newcommand{\Rp}{{\RR_+^*}}
\newcommand{\esp}[1]{\EE\left[#1\right]}
\newcommand{\nor}[1]{\left|#1 \right|}
\newcommand{\EE}{\mathbb{E}}
\newtheorem{thm}[equation]{Theorem}
\newtheorem{cor}[equation]{Corollary}
\newtheorem{lem}[equation]{Lemma}
\newtheorem{lemma}[equation]{Lemma}
\newtheorem{prop}[equation]{Proposition}
\theoremstyle{definition}
\numberwithin{equation}{section}
\begin{document}
\title[Invariant measure in the critical case]{ On the invariant
measure of the random difference equation $X_n=A_n X_{n-1}+ B_n$ in
the critical case}
\author[S. Brofferio, D. Buraczewski and E. Damek]
{Sara Brofferio, Dariusz Buraczewski and Ewa Damek}
\address{S. Brofferio\\ Universit\'e Paris-Sud\\
Laboratoire de Math\'ematiques\\
91405 Orsay Cedex\\ France }
\email{sara.brofferio@math.u-psud.fr}
\address{D. Buraczewski and E. Damek\\
Institute of Mathematics\\
University of Wroclaw\\
pl. Grunwaldzki 2/4\\ 50-384 Wroclaw\\ Poland\\}
\email{dbura@math.uni.wroc.pl\\ edamek@math.uni.wroc.pl}

\begin{abstract}
We consider the autoregressive model on $\R^d$ defined by the
following stochastic recursion $X_n = A_n X_{n-1}+B_n$, where
$\{(B_n,A_n)\}$ are i.i.d. random variables valued in $\R^d\times
\R^+$. The critical case,  when $\E\big[\log A_1\big]=0$, was
studied by Babillot, Bougeorol and Elie, who proved that there
exists a unique invariant
 Radon measure $\nu$ for the Markov chain
$\{ X_n \}$. In the present paper we prove that the weak limit of
properly dilated measure $\nu$ exists and defines a homogeneous
measure on $\R^d\setminus \{0\}$.

\end{abstract}

\thanks{
This research project has been partially supported
 Marie Curie Transfer of Knowledge Fellowship {\it Harmonic Analysis,
Nonlinear Analysis and Probability} (contract number MTKD-CT-2004-013389).
S. Brofferio was also supported by CNRS project ANR - GGPG (R\'ef. JCJC06 149094).
D.~Buraczewski and E.~Damek were also supported by KBN
grant   N201 012 31/1020.
}
\maketitle

%\marginpar{\hspace{0pt}the table of contetents is just temorary}
%\tableofcontents

\section{Introduction and the main result}

We consider the autoregressive process on $\R^d$:
\begin{equation}
\label{process}
\begin{split}
X_0^x&=x,\\
X_n^x&=A_n X_{n-1}^x + B_n,
\end{split}
\end{equation}
where the random pairs $\{(B_n,A_n)\}_{n\in \N}$ valued in $\R^d\times \R^+$ are
independent, identically distributed (i.i.d.) according to a given
probability measure $\mu$. Markov chain \eqref{process} occurs in
various applications e.g. in biology and economics, see
\cite{BBE,RS} and the comprehensive bibliography there.

It is convenient to define $X_n$ in the group language. Let $G$ be
the ``$ax+b$'' group, i.e. $G=\R^d\rtimes\Rp$, with multiplication
defined by $(b,a)\cdot(b',a') = (b+ab',aa')$. The group $G$ acts
on $\R^d$ by $ (b,a)\cdot x  = ax+b$,

 for $(b,a)\in G$ and $x\in\R^d$.
 For each $n$, we sample the random variables $(B_n,A_n)\in G$ independently with respect to the  measure $\mu$
and we write
$
W_n = (B_n,A_n)\cdot\ldots\cdot(B_1,A_1)
$ for the left random walk on $G$. Then
$
X_n^x = W_n \cdot x$.

\medskip

The Markov chain $X_n^x$ is usually studied under the assumption
$\E\big[\log A_1\big] < 0$. Then, if additionally
$\E\big[\log^+|B|\big]<\8$, there is a unique stationary
probability measure $\nu$ \cite{K}, i.e. the measure $\nu$ on
$\R^d$ satisfying
\begin{equation*}
%\label{c44}
\mu *_G\nu(f) = \nu(f),
\end{equation*}
for any positive measurable function $f$. Here
$$ \mu *_G\nu(f) = \int_G\int_{\R^d} f(ax+b)\nu(dx)\mu(db\,da). $$
In a number of papers \cite{K,Grin1,L,Go,Gu, BD}, under some
additional assumptions, behavior of the tail of $\nu$ was studied.
Roughly speaking, it is known that, if there is a positive constant $\a$ such that $\E[A_1^\a]=1$,
%and $\mu$ has moments slightly bigger then $\a$,
 then
$$
\nu\big(\{ t:\;|t|>z \}\big)\sim C z^{-\a}\quad\mbox{ as } z\to +\8,
$$
 In \cite{BD} not only the size of
the tail is studied but also the asymptotic behavior of $\nu $ at
infinity. It is proved there that the weak limit of $ z^{\alpha }
\d_{(0,z^{-1})}*_G\nu(K)$, when $z\to \infty$ exists and it is a
Radon measure homogeneous of degree $\alpha $.

\medskip

Here we study the critical case, when $\E\big[\log A_1\big] =0$.
Then $X_n$ has no invariant
probability measure. However it was proved by
Babillot, Bougerol and Elie \cite{BBE} that under the following hypotheses
\begin{itemize}
  \item $\P[A_1=1]<1$ and $\P[A_1x+B_1=x]<1$ for all $x\in\R^d$,
  \item $\esp{(|\log A_1|+\log^+|B_1|)^{2+\eps}}<\8$,
  \item $\esp{\log A_1}=0$.
\end{itemize}
there exists a unique (up to a constant factor) invariant Radon
measure $\nu$  (see also \cite{Be,Br}). We are going to say that
$\mu$ satisfies hypothesis {\bf (H)} if all the assumptions above
are satisfied.

\medskip

Our aim is to study behavior of $\nu$ at infinity, i.e. to
understand how the measure $\nu(zK)=\d_{(0,z^{-1})}*_G\nu(K)$ of
the compact set $K$ dilated by $z>0$ behaves as $z$ goes to
infinity.
The known results  concern mainly the one dimensional setting and
they have been proved under quite restrictive hypotheses. The
first estimate of $\nu$ is given in \cite{BBE}, where it is
assumed that $d=1$ and that the closed semigroup generated by the
support of $\mu$ is the whole group $G$. Then for every $\a<\b$
$$
\nu\big((\a z,\b z]\big) \sim \log(\b/\a)\cdot L(z) \quad \mbox{ as } z\to +\8,
$$
where $L$ is a slowly varying function.

More recently the second author has proved, in \cite{B}, that $L$
is constant, provided the measure $\mu_A$ (the projection of $\mu$
onto $\Rp$) is spread-out and $\mu$ has finite small moments.
(as defined in \eqref{h1})
Moreover he has proved the positivity of the constant $C$ in the
special case $B_1>\eps$ a.s. for $\eps>0$.

%Up to our knowledge
The only results concerning the multidimensional critical case
have been obtained in \cite{DH, BDH} in a very particular setting,
when the measure $\mu$ is related to a differential operator. More
precisely, let $\{\mu_t\}$ be the one parameter semigroup of
probability measures, whose infinitesimal generator is a second
order elliptic  differential operator on $\R^d\times \Rp$. Then
there exists a unique Radon measure $\nu $ that is
$\mu_t$-invariant, for any $t$. Moreover, $\nu$ has a smooth
density $m$ such that
$$
m(zu) \sim C(u) z^{-d} \mbox{ as } z\to +\8,
$$
for some continuous nonzero function $C$ on $\R^d\setminus\{0\}$.

\medskip

Here we consider the multidimensional situation and general
measures $\mu$. We prove that the family of measures
$\d_{(0,z^{-1})}*_G \nu$ has a weak limit as $z$ tends to $+\8$,
the limit measure is homogeneous and so it has a radial
decomposition, analogous to the one obtained in the contracting
case \cite{Gu,BD}. For that we do not need hypotheses concerning
the support of $\mu$ and nonsingularity of the measure $\mu_A$.
Moreover we prove nondegeneracy of the limit.

\medskip

To state our main result we need some further definitions.
Let $G(\mu_A)$ be the closed subgroup of $\Rp$ generated by the support of $\mu_A$.
Since $\P[A_1=1]<1$ we have two possibilities, either $G(\mu_A)=\Rp$, in this case $\mu_A$ is said aperiodic
or $G(\mu_A)$ is countable and $\mu_A$  is said periodic, then $G(\mu_A)=\langle e^p\rangle=\{ e^{np}:\; n\in \Z \}$
for some $p>0$.

%i.e. $G(\mu_A)=\langle \e^p \rangle = \{  \e^{np}:\; n\in \Z\}$,
%for $p>0$.
%Let $l(da)$ be the Haar measure of $G(\mu_A)$, i.e. $l(da)=\frac{da}a$
%if $G(\mu_A)=\Rp$ or $l(da)$ is the counting measure, otherwise.
%Next define $\Sigma^{d-1}=S^{d-1}=\{|x|=1\}$ if $G(\mu_A)$ is aperiodic
% and $\Sigma^{d-1}=\{x:\; 1\le |x|< \e^p \}$ if $G(\mu_A)$ is periodic.
Given a unit vector $w$ in $\R^d$ let $\R w$
 be the line
generated by $w$ and let $\pi_w(x)$ be the orthogonal projection
of $x$ onto $\R w$;

 We will say that hypothesis {\bf (G)} is satisfied  if
 there exists an affine subspace $W\subset \R^d$ of dimension $d-1$
 and a unit vector $w$ perpendicular to $W$ such that
\begin{itemize}
\item
 the half-space $W+\R^+ w$ is $\mu$-invariant;
%\item the projection of the action of the support of $\mu$ onto $\R w=\R w$, the
%subspace of $\R^d$ generated by $w$, has no fixed points, i.e.
%$\P[\pi_w(A_1x+B_1)=\pi_w(x)]<1$ for every $x\in \R^d$, where
%$\pi_w(x)$ is the orthogonal projection of $x$ onto $\R w$;

\item the projection of the action of the support of $\mu$ onto $\R w$,
 has no fixed points, i.e. $\P[\pi_w(A_1x+B_1)=\pi_w(x)]<1$
 for every $x\in \R^d$;

\item
the following integral condition holds:
$$
\E\Big[ \log^- \big| \pi_w (B_1 + (1-A_1)w_0)  \big|
\Big]<\8,
$$
where $w_0$ is the multiple of $w$ such that $-w_0\in W$.
\end{itemize}
Notice that hypothesis ({\bf G}) is fulfilled if e.g. one of  coordinates of $B_1$, say $\pi_1(B_1)$
is positive a.e., $\E\big[\log^-|\pi_1(B_1)|\big]<\8$ and the action of $\mu$ on the first coordinate
has no fixed points.
\medskip

The main results of the paper are the following

\begin{thm}
\label{mthm1} Assume that hypothesis {\bf (H)} is satisfied, the measure $\mu_A$
is aperiodic and
either
\begin{equation}
\label{h1} \E \big[ A^\d + A^{-\d}+|B|^\d \big] < \8 \mbox{ for some  $\d>0$, that is $\mu$ \emph{has small moments}},\quad\quad\quad\quad\quad\quad\quad
\end{equation}
or
\begin{equation}
\label{h1'} \E \big[ (|\log A| + \log^+|B|)^{4+\eps}\big] < \8 \mbox{ for some $\eps>0$ and
hypothesis {\bf (G)} holds.}\quad\quad\quad\quad\quad\quad
\end{equation}
Then there exists a probability  measure  $\Sigma$ on  $S^{d-1}=\{|u|=1\}$\! and a positive number $C_+$ such that
the measures $\d_{(0,z^{-1})}*_G\nu$ converge weakly on $\R^d\setminus\{0\}$ to $C_+ \Sigma \otimes \frac{da}a$ as $z\to+\8$,
that is
$$
\lim_{z\to +\8}\int_{\R^d}\phi(u z^{-1})\nu(du) = C_+  \int_{\Rp}\int_{S^{d-1}}\phi(aw)\Sigma(dw)\frac{da}a
$$
%\end{equation}
for every function $\phi\in C_c(\R^d\setminus\{0\})$.

In particular for every $\a<\b$
\begin{equation}
\label{rlimit}
\lim_{z\to \8}\nu\big(u:\; \a z<|u| < \b z \big) = C_+ \log\frac\b\a .
\end{equation}
\end{thm}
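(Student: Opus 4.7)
The plan is to reduce the weak convergence of the rescaled invariant measures $\d_{(0,z^{-1})}*_G\nu$ to a renewal theorem for the centered random walk $S_n=\sum_{k=1}^n\log A_k$, and then to identify the angular factor $\Sigma$ via an auxiliary chain on $S^{d-1}$. For $\phi\in C_c(\R^d\setminus\{0\})$ set $F_\phi(t)=\int \phi(e^{-t}x)\,\nu(dx)$; the statement amounts to
$$F_\phi(t) \longrightarrow C_+\int_{\R}\int_{S^{d-1}}\phi(e^s u)\,\Sigma(du)\,ds\qquad\text{as }t\to+\8.$$
Iterating the invariance $\nu=\mu^{*n}*\nu$ rewrites $F_\phi(t)$ as $\E\!\left[\int\phi(e^{-t}(P_nx+M_n))\,\nu(dx)\right]$, with $P_n=A_1\cdots A_n=e^{S_n}$ and $M_n=\sum_{k=1}^nA_1\cdots A_{k-1}B_k$. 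By hypothesis ${\bf (H)}$ and the aperiodicity of $\mu_A$, the walk $S_n$ is centered and non-lattice; under \eqref{h1} it has exponential moments, under \eqref{h1'} polynomial moments of order $4+\eps$.

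Let $\tau$ be the first ascending ladder epoch of $S_n$. Splitting the iteration along the successive ladder times and using the strong Markov property, I would derive a renewal equation
$$F_\phi(t)=\int_0^\8 F_\phi(t-s)\,\mathcal{U}(ds)+H_\phi(t),$$
in which $\mathcal{U}$ is (essentially) the renewal measure of $S_\tau$ and $H_\phi$ collects the contribution of single excursions and of the affine drift $M_n$. The moment hypotheses guarantee that $H_\phi$ is directly Riemann integrable, while $S_\tau$ is non-lattice and has finite mean by Wald's identity. Blackwell's renewal theorem then yields a finite limit $F_\phi(+\8)=(\E[S_\tau])^{-1}\int_\R H_\phi(s)\,ds$. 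Translation invariance of the limit in the log-radial variable $t$ (inherited from the translation invariance of the renewal measure) forces the limit functional on $\phi$ into the product form $C_+\int_\R\int_{S^{d-1}}\phi(e^s u)\,\Sigma(du)\,ds$, where $\Sigma$ is identified as the unique stationary distribution on $S^{d-1}$ of the projectivised chain $u\mapsto A_nu/|A_nu|$ (the $B_n$-term being asymptotically negligible on the radial scale $e^{-t}$).

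Positivity of $C_+$ is the last step and is case-dependent. Under \eqref{h1}, small moments enable a Mellin-transform argument in the spirit of \cite{B}: $\widehat\nu(s)$ extends meromorphically and the aperiodicity of $\mu_A$ prevents vanishing of its residue at $s=0$. Under \eqref{h1'} and $({\bf G})$, the $\mu$-invariant half-space $W+\R^+w$ together with the integral condition on $\pi_w(B_1+(1-A_1)w_0)$ allow one to test $F_\phi$ against a function concentrated in the direction $w$ and to bound it below. The principal obstacle is the middle step: because $M_n$ is not small compared to $P_nx$, the renewal equation only appears after a careful truncation and coupling argument controlling the escape of $X_n$ to infinity, and it is precisely here that the $(4+\eps)$-moment bound in \eqref{h1'} is needed to secure direct Riemann integrability of $H_\phi$. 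The identification of $\Sigma$ in Step 2 and the positivity of $C_+$ under each of the two alternative hypotheses are genuinely independent technical points that require additional arguments beyond the renewal scheme itself.
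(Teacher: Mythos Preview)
Your scheme has a genuine gap at its core. The invariance $\nu=\mu*\nu$ does \emph{not} produce a renewal equation of the form $F_\phi(t)=\int_0^\8 F_\phi(t-s)\,\mathcal U(ds)+H_\phi(t)$ with a small remainder $H_\phi$; iterating it gives back $F_\phi$ tautologically. What invariance \emph{does} give immediately is the Poisson equation $\ov\mu*_\R f_\phi=f_\phi+\psi_\phi$ for the centered law $\ov\mu$ of $-\log A_1$, with $\psi_\phi(x)=\int_G\int_{\R^d}\big[\phi(e^{-x}au)-\phi(e^{-x}(au+b))\big]\nu(du)\,\mu(db\,da)$; this is the route taken in the paper. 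Since $\ov\mu$ is centered, the relevant potential theory is the \emph{recurrent} one of Port--Stone, not Blackwell's renewal theorem, and the main technical work consists in (i) extending that theory to a class $\mathcal F(\ov\mu)$ of functions that makes sense for non-spread-out $\ov\mu$, and (ii) showing that $\psi_\phi\in\mathcal F(\ov\mu)$ with $J(\psi_\phi)=0$ for a sufficiently rich family of test functions $\phi$ (convolutions with the special function $r(t)=\E[|{-\log A_1}-t|-|t|]$). The moment hypotheses \eqref{h1}/\eqref{h1'} are used exactly here, to control the decay of $\psi_\phi$ (Lemmas~\ref{decayofpsi} and~\ref{7.10}), not to obtain direct Riemann integrability of a hypothetical $H_\phi$.

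Your identification of $\Sigma$ is also incorrect: because $A_n\in\R^+$ is a scalar dilation, the ``projectivised chain'' $u\mapsto A_nu/|A_nu|=u/|u|$ is constant and carries no information. In the paper, $\Sigma$ is determined differently: once one knows that every accumulation point $\eta$ of $\d_{(0,z^{-1})}*_G\nu$ is $\Rp$-invariant (Lemma~\ref{lem-inv-lim}), $\eta$ must equal $C_\eta\,\Sigma_\eta\otimes\frac{da}{a}$, and testing against functions $\Phi_\g(u)\,\zeta_0(u/|u|)$ of the special type above fixes $C_\eta$ and $\Sigma_\eta(\zeta_0)$ independently of the subsequence. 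Finally, positivity of $C_+$ is not obtained by a Mellin-transform or half-space argument: it is proved under hypothesis {\bf (H)} alone (Theorem~\ref{positive}) via the Babillot--Bougerol--\'Elie representation $\nu(f)=\int\E\big[\sum_{n=0}^{L-1}f(X_n^x)\big]\nu_L(dx)$ combined with a time-reversible duality lemma and the renewal theorem for the \emph{ascending} ladder process, yielding the lower bound $\int f(|u|)\,\nu(du)\ge C\int_M^\8 f(a)\,\frac{da}{a}$.
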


\begin{thm}
\label{mthm2}
If hypothesis {\bf (H)} is satisfied, the measure $\mu_A$ is periodic, i.e. $G(\mu_A)=\langle e^p \rangle$ and either
\eqref{h1} or \eqref{h1'} holds, then there exists a positive constant $C_+$ such that
$$ \lim_{z\to\8} \nu\big(u:\;  z<|u| < e^{np} z \big) = nC_+,$$
for every $n\ge 1$.
\end{thm}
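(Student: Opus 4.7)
\emph{Proof plan.} The strategy is to rerun the proof of Theorem~\ref{mthm1}, replacing at the final step the non-arithmetic renewal argument used there by its lattice counterpart; all earlier reductions carry over verbatim.

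For $\phi\in C_c(\R^d\setminus\{0\})$ set
$$
\Phi_\phi(t)\,:=\,\int_{\R^d}\phi(e^{-t}u)\,\nu(du),\qquad t\in\R.
$$
Writing out the invariance identity $\mu*_G\nu=\nu$ and performing the change of variable $a\mapsto\log a$ in the $A$-integral, one derives (as in Theorem~\ref{mthm1}) a convolution equation
$$
\Phi_\phi(t)\,=\,\wt\mu_A*\Phi_\phi(t)\,+\,\Psi_\phi(t),
$$
where $\wt\mu_A$ is the push-forward of $\mu_A$ under $\log:\R^+\to\R$ (a probability measure of zero mean, now supported on the lattice $p\Z$) and $\Psi_\phi$ is an error term whose integrability is ensured by \eqref{h1} or \eqref{h1'}.

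The only point where the proof of Theorem~\ref{mthm2} diverges from that of Theorem~\ref{mthm1} is the renewal step: since $\wt\mu_A$ is arithmetic with span $p$, one invokes the arithmetic (lattice) version of the null-drift renewal theorem instead of its non-arithmetic counterpart. Applied to $\phi_1:=\1_{\{1<|u|\le e^p\}}$ (after smoothing if needed) it yields $\Phi_{\phi_1}(t)\to C_+$ as $t\to+\8$ for some constant $C_+$. The additive decomposition
$$
\phi_n\,:=\,\1_{\{1<|u|\le e^{np}\}}\,=\,\sum_{k=0}^{n-1}\phi_1(e^{-kp}\,\cdot\,),
$$
together with the identity $\Phi_{\phi(e^{-kp}\cdot)}(t)=\Phi_\phi(t+kp)$, then gives
$$
\nu\big(\{u:\,z<|u|\le e^{np}z\}\big)\,=\,\sum_{k=0}^{n-1}\Phi_{\phi_1}(\log z+kp)\,\longrightarrow\,nC_+\quad\text{as }z\to+\8,
$$
which is the claimed formula.

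The main obstacle, as in Theorem~\ref{mthm1}, is the strict positivity $C_+>0$: the renewal step provides only the existence of a finite limit, and in principle one must rule out $C_+=0$. This is handled by the same non-degeneracy argument as in the aperiodic case, which uses \eqref{h1} or the hypothesis ({\bf G}) built into \eqref{h1'} in a crucial way; a minor but genuine point to check is that this argument is insensitive to whether $\wt\mu_A$ is arithmetic or not, since it only exploits the $B$-part of $\mu$.
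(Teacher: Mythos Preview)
Your outline has the right shape but contains two genuine gaps, and the paper's route is different from what you sketch.

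The first gap is hidden in ``after smoothing if needed''. In the periodic case the class $\mathcal F(\ov\mu)$ of admissible $\psi$ is restrictive: one needs $\wh\psi(-\theta)/(1-\wh{\ov\mu}(\theta))$ integrable away from $0$, but $1-\wh{\ov\mu}(\theta)$ vanishes at every $2\pi k/p$, not just at $0$, so a generic smoothed $\phi_1$ will not produce $\psi_{\phi_1}\in\mathcal F(\ov\mu)$. The paper avoids this by working only with test functions of the special form $\phi(u)=\int_\R r(t)\zeta(e^tu)\,dt$, $r(t)=\E[|{-}\log A_1-t|-|t|]$; since $\wh r(\theta)=C(\wh{\ov\mu}(\theta)-1)/\theta^2$, the zeros cancel and $\psi_\phi=r*\psi_\zeta\in\mathcal F(\ov\mu)$ (Lemma~\ref{phi0}). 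The second gap is that even once one is in $\mathcal F(\ov\mu)$, the lattice version of Corollary~\ref{ps1} only gives $f_\phi=A\psi_\phi+h_\phi$ with $h_\phi$ \emph{periodic} of period $p$, so a priori $f_\phi(t)$ converges only along $p\Z$, not for all $t\to\infty$. The paper kills this by observing $f_\phi(t)\to 0$ as $t\to-\infty$ (no mass of $\nu$ at $0$) while $A\psi_\phi$ has a limit there, forcing $h_\phi$ to be constant; you never address this point.

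Because of the first obstacle the paper does not attack $\Phi_{\phi_1}$ head-on. Instead it uses Proposition~\ref{mprop}: the family $\d_{(0,e^{-x})}*_G\nu$ is weakly relatively compact and every accumulation point $\eta$ is $G(\mu_A)$-invariant, hence $\eta=C_\eta\,\Sigma_\eta\otimes l$ with $l$ the counting measure on $\langle e^p\rangle$ and $\Sigma_\eta$ a probability on the fundamental domain $D=\{1\le|u|<e^p\}$. It then constructs one radial $\Phi$ of the admissible type with the extra property $\d_{(0,a)}*_G l(\Phi)=l(\Phi)$ for all $a>0$; this yields $T(\Phi)=\eta(\Phi)=C_\eta\,l(\Phi)$, so $C_+:=C_\eta$ is the same for every $\eta$. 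Since $\eta(\1_{C(z,e^{np}z)})=nC_+$ for each accumulation point, the limit exists and equals $nC_+$; your additive decomposition is unnecessary. Finally, a small correction: the positivity of $C_+$ (Theorem~\ref{positive}) uses only hypothesis~{\bf (H)}, not \eqref{h1} or~{\bf (G)}, and is indeed insensitive to periodicity.
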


%We would like to observe that
%%, as far as we know, the moment hypothesis we require, are in some sense weaker
%%than the one required in the contracting case. That could suggest that
%our techniques could lead to estimates of
% the tail behavior in the contracting case  when the  harmonic exponential $\a$ does not exist.
%Some partial results are obtained under weaker conditions than in the main theorem, e.g.
%for periodic measures.
The first estimate of the behavior of the tail of $\nu$    is given in Section
\ref{sect-upper-bound} under the very mild hypothesis {\bf (H)}.
In Theorem \ref{thm-upp-bound}, we prove
that    $\d_{(0,z^{-1})}*_G\nu(K)$ is smaller than $C_K L(z)$, for
all compact sets $K$ and a slowly varying function $L$, i.e. the
family of measures $\d_{(0,z^{-1})}*_G\nu/L(z)$ is weakly compact.
We also show some invariance properties of the accumulation points
and an upper-bound for the measure $\nu$ that implies $\nu(du)$
integrability of the function $(1+|u|)^{-\g}$ for every $\g>0$.
Moreover, if additionally hypothesis {\bf (G)} is assumed, then
(Proposition \ref{prop2}) says that the slowly varying function
$L$ is dominated by the logarithm function,
 and for every $\g>0$ the function $\log^{-(1+\g)}(2+|u|)$
is $\nu(du)$ integrable. All the estimates are crucial in the
remaining  part of the proof.

\medskip

Next we reduce the problem of describing the tail of $\nu$ to
study asymptotic behavior of positive solutions of the Poisson
equation, as in \cite{Bu}. More precisely, given  a positive
$\phi\in C_c(\R^d\setminus\{0\})$ we define the function
\begin{equation}
%\label{def}
f_{\phi}(x) = \d_{(0,e^{-x})}*_G\nu(\phi)=\int_{\R^d}\phi(\e^{-x}
u)\nu(du).
\end{equation}
on $\R$. Let $\ov \mu$ be the law of $-\log A_1$ i.e. for a Borel
set $U$
\begin{equation} \label{overmu} \ov \mu (U) = \mu_A(\{x:\;
-\log x\in U\}).
\end{equation}
  Observe that the mean of $\ov\mu$ is
equal to 0. The convolution on $\R$ of $\ov\mu$ and a function $f$
is  $\ov\mu*_\R f(x)= \int f(y+x)\ov\mu(dy)$ and the function
$f_{\phi}$ satisfies the Poisson equation
\begin{equation}
\label{poisson}
\ov\mu*_\R f_{\phi}(x) = f_{\phi}(x)+\psi_{\phi}(x),\qquad x\in\R,
\end{equation}
for a specific function $\psi_\phi$. $\psi_\phi$ posses some
regularity properties and it is easier to study than $f_\phi$. The
main problem can be formulated as follows: given a function
$\psi_{\phi}$ describe the behavior at infinity of positive
solutions of the Poisson equation. An answer to this rather
classical question has been given by Port and Stone \cite{PS2},
under the hypothesis that $\ov\mu$ is spread out.  To avoid
this restriction, in Section \ref{sect-poisson} and Appendix
\ref{port-stone}, we generalize Port and Stone
 results to a generic centered measure $\ov\mu$ on $\R$. Namely, we introduce a class of functions ${\mathcal F}(\ov\mu)$,
 larger than the one considered in \cite{PS2},
such that if $\psi\in {\mathcal F}(\ov\mu)$ then the recurrent
potential kernel  $A\psi$ is well defined and using it we describe
asymptotic of bounded solutions of the corresponding Poisson
equation. We also give, using results of Baldi \cite{Ba}, an
explicit description of the solutions of the Poisson equation in
term of recurrent potential.

\medskip
The next step is to prove that the function $\psi_{\phi}$ belongs to ${\mathcal F}(\ov\mu)$, that in fact can be done only for
 very particular functions $\phi$. However this class is sufficient to deduce our main result (Section \ref{sect-existence}).
% We also prove a partial result in the case when the measure $\mu_A$ is not necessarily aperiodic (see Proposition \ref{mprop}).
Finally in Section \ref{sect-positivity} we prove  that the limit
of $\nu\big( \a z<|u| < \b z \big)$ is strictly positive and the
only hypothesis needed for this results is condition \textbf{(H)}.

\medskip
%3 razy of to za duzo
%Let us mention briefly that
%Our results are closely related to the problem
%of describing tails of fixed points of smoothing transformations in
%'the boundary case', that has still not been completely understood.
Our results can be aplied to study fixed points of smoothing transformations.
More precisely we are able to describe their asymptotic behavior at infinity in the so called boundary case.
Roughly speaking,
let $\{A_n\}_{n\in\N}$ be a sequence of random positive numbers such that  only first $N$
of them are nonzero, where $N$ is some random number, finite a.s.
Given a  random variable $Z$ and a sequence $\{Z_n\}_{n\in\N}$ of i.i.d. copies of $Z$ independent
both on $N$ and  $\{A_n\}_{n\in\N}$, we define new random variable $Z^*=\sum_{i=1}^NA_iZ_i$
and the map $Z\to Z^*$ is called smoothing transformation. The random variable $Z$ is
said to be fixed point  if $Z^*$ has the same distribution as $Z$.
There exists an extensive literature, where the problems of existence,
uniqueness and asymptotic behavior of Z  were
studied, see e.g. Durrett and Liggett \cite{DL}, Liu \cite{Liu00},
Biggins and Kyprianou \cite{BK}. The case of solutions having finite mean
has been completely described, however in the boundary case, when $Z$
has infinite mean all the results concerning asymptotic behavior of $Z$
are expressed in terms of the Laplace transform. Applying techniques
introduced by Guivarc'h \cite{Gui} and Liu \cite{Liu00} and Theorem \ref{mthm1}
one can prove (of course under appropriate assumptions) that
$\lim_{x\to\8}x\P[Z>x]$ exists and is positive. A complete proof
of this fact will be the subject of a future work.

\medskip

The authors are grateful to Jacek Dziuba\'nski and Yves Guivarc'h for
stimulating discussions on the subject of the paper.

\section{Upper bound}
\label{sect-upper-bound} The goal of this section is to prove  a
preliminary estimate of the measure
 $\nu$ at infinity. We first prove that, under very mild hypothesis {\bf (H)} on the
 measure $\mu$,
 %(that is when even we can guarantee the existence of the $\mu$-stationary(?) measure)
the tail measure of compact set $\d_{(0,z^{-1})}*\nu(K)$ is
bounded by a slowly varying function $L$, that is a
 function
 on $\Rp$ such that $\lim_{z\to+\infty} L(a z)/L(z)=1$ for all $a>0$.
 An important property of such  functions is that they grow very slowly, namely slower than $z^\gamma$ for any $\gamma>0$.

  Under the hypothesis {\bf (G)} we will  prove in the next subsection a stronger result: any bounded function that is integrable, at infinity,
  with respect to the measure $\frac{da}{a}$ (i.e. the Lebesgue measure of $\Rp$)  is $\nu$-integrable. In particular
  this implies that $L(z)$ is bounded by $\log z$.
\subsection{ Generic case}
\begin{thm}\label{thm-upp-bound}
If hypothesis {\bf (H)} is fulfilled,
then there exists a positive slowly varying function $L$ on $\Rp$ such that  the normalized family of measures on $\R^d\setminus\{0\}$
  $$\frac{\d_{(0,z^{-1})}*_G\nu}{L(z)}$$ is weakly compact  for $z\geq 1$. That is
  $\frac{\delta_{(0,z^{-1})}*_G \nu(\phi)}{L(z)}$ is bounded  for all  $\phi$ bounded and compactly supported.
  Thus $(1+\nor{x})^{-\gamma}\in L^1(\nu)$ for all $\gamma>0$.
 Furthermore, all accumulation points $\eta$ are non null and invariant with respect to the group $G(\mu_A)$,  that is
  $$\delta_{(0,a)}*_G\eta=\eta\quad \forall a\in G(\mu_A).$$
\end{thm}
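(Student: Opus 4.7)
The plan is to transport the problem to the real line via the radial logarithmic change of variable and then to apply potential theory for the recurrent centered random walk on $\R$ whose step distribution is $\ov\mu$, the law of $-\log A_1$. For any $\phi\in C_c(\R^d\setminus\{0\})$ with $\phi\geq 0$, introduce
\[
f_\phi(x)\;=\;\int_{\R^d}\phi(\e^{-x}u)\,\nu(du),\qquad x\in\R.
\]
Since $\supp\phi$ is compact in $\R^d\setminus\{0\}$, the integrand is supported in an annulus $\{c_1\e^{x}\leq|u|\leq c_2\e^{x}\}$, and $f_\phi(x)$ is finite by the Radon property of $\nu$. Using the invariance $\mu*_G\nu=\nu$ (as is done around \eqref{poisson}), $f_\phi$ satisfies the Poisson-type equation $\ov\mu*_\R f_\phi = f_\phi+\psi_\phi$ with an explicit forcing term $\psi_\phi$ which, for the class of test functions relevant here, is bounded and decays in the tails in $x$.

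The heart of the proof is the construction of $L$. Fix a nontrivial nonnegative reference $\phi_0\in C_c(\R^d\setminus\{0\})$ and set $L(z):=1+f_{\phi_0}(\log z)$ for $z\geq 1$. First I would establish that $L$ is slowly varying, i.e.\ $L(az)/L(z)\to 1$ as $z\to+\infty$ for every $a>0$. This follows by iterating the Poisson equation to obtain
\[
f_{\phi_0}(x)-\ov\mu^{*n}*_\R f_{\phi_0}(x)\;=\;\sum_{k=0}^{n-1}\ov\mu^{*k}*_\R\psi_{\phi_0}(x),
\]
and using the recurrence and local-limit-type spreading of the centered random walk with step $\ov\mu$ to deduce that for any fixed shift $h\in\R$ the difference $|f_{\phi_0}(x+h)-f_{\phi_0}(x)|$ is negligible compared with $f_{\phi_0}(x)$ as $x\to+\infty$. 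Second, for an arbitrary $\phi\in C_c(\R^d\setminus\{0\})$, $\phi\geq 0$, I compare $f_\phi$ with $f_{\phi_0}$: a minimum-principle argument applied to positive solutions of Poisson equations with the same centered $\ov\mu$, combined with the fact that both $\psi_\phi$ and $\psi_{\phi_0}$ belong to the same small class of forcings, yields $f_\phi(x)\leq C_\phi\bigl(f_{\phi_0}(x)+1\bigr)$. Together these give $\d_{(0,z^{-1})}*_G\nu(\phi)/L(z)\leq C_\phi$ uniformly in $z\geq 1$, hence weak compactness on $\R^d\setminus\{0\}$ by Prokhorov.

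Integrability of $(1+|u|)^{-\g}$ follows by dyadic summation:
\[
\int_{|u|\geq 1}(1+|u|)^{-\g}\nu(du) \;\leq\; \sum_{k\geq 0}2^{-k\g}\,\nu(\{|u|\leq 2^{k+1}\}) \;\leq\; C\sum_{k\geq 0}2^{-k\g}L(2^{k+1}),
\]
and since $L$ is slowly varying, $L(2^{k+1})=o(2^{k\g/2})$ for every $\g>0$, so the series converges. Non-nullness of any accumulation point $\eta=\lim_n\d_{(0,z_n^{-1})}*_G\nu/L(z_n)$ is automatic: $\eta(\phi_0)=\lim f_{\phi_0}(\log z_n)/L(z_n)=1$ by construction. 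For the $G(\mu_A)$-invariance, a direct computation shows
\[
\int\phi(au)\,\eta(du)\;=\;\lim_n \frac{f_\phi(\log z_n-\log a)}{L(z_n)},
\]
so it suffices to prove $|f_\phi(x-\log a)-f_\phi(x)|=o(L(\e^x))$ whenever $-\log a$ lies in the closed subgroup of $\R$ generated by $\supp\ov\mu$. This is again a consequence of iterating the Poisson equation: shifts in that subgroup are exactly those accessible by $\ov\mu^{*n}$ (after compensating by $\ov\mu^{*m}$ for suitable $n,m$), and such shifts are absorbed by the recurrent averaging.

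The main obstacle is the slow variation of $L$ together with the comparison estimate for $f_\phi$ without any aperiodicity or spread-out hypothesis on $\mu_A$. Classical Port--Stone recurrent potential theory assumes $\ov\mu$ to be spread out; here one needs a generalized framework valid for an arbitrary centered probability measure on $\R$, which is the content of Section \ref{sect-poisson} and Appendix \ref{port-stone}. A secondary subtlety is that the forcing $\psi_\phi$ does not automatically lie in the most convenient Port--Stone class, and one must either restrict to a well-chosen family of test functions $\phi$ or smooth preliminarily by convolving with suitable kernels before invoking the potential-theoretic estimates.
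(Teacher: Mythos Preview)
Your approach has a genuine circularity. You propose to prove Theorem~\ref{thm-upp-bound} via the Poisson equation $\ov\mu*_\R f_\phi=f_\phi+\psi_\phi$ and the Port--Stone recurrent potential theory of Section~\ref{sect-poisson}. But in the paper that machinery is applied \emph{after} Theorem~\ref{thm-upp-bound} has been established, and in fact depends on it: Lemmas~\ref{decayofpsi} and~\ref{7.10} use the upper bounds from Section~\ref{sect-upper-bound} to show that $\psi_\phi$ decays. Your claim that $\psi_\phi$ ``is bounded and decays in the tails in $x$'' under hypothesis~\textbf{(H)} alone is unjustified: writing
\[
\psi_\phi(x)=\int_G\int_{\R^d}\big[\phi(\e^{-x}au)-\phi(\e^{-x}(au+b))\big]\,\nu(du)\,\mu(db\,da),
\]
any decay estimate as $x\to+\infty$ requires a priori control of $\nu$ on annuli of size $\e^x$, which is exactly what Theorem~\ref{thm-upp-bound} asserts. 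Likewise, the class $\mathcal{F}(\ov\mu)$ demands Fourier-side integrability of $\wh\psi(-\th)/(1-\wh{\ov\mu}(\th))$ away from zero, which fails for generic $\psi_\phi$ without further hypotheses; this is why in Section~\ref{sect-existence} the paper restricts to the special test functions~\eqref{eq-phispec-def}. Your ``minimum-principle'' comparison $f_\phi\le C_\phi(f_{\phi_0}+1)$ and the slow-variation argument via ``local-limit-type spreading'' are also not substantiated under~\textbf{(H)} alone.

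The paper's actual proof is completely elementary and avoids potential theory. Proposition~\ref{prop-weak-bound} proves the comparison $\d_{(0,z^{-1})}*_G\nu(K)\le C_K\,\d_{(0,z^{-1})}*_G\nu(r)$ by a direct geometric argument on annuli $C(\a,\b)=\{\a\le|u|\le\b\}$: invariance of $\nu$ under $\mu^{*n}$ together with the presence of both a contracting element $g_-$ and a dilating element $g_+$ in $\supp\mu$ lets one cover any compact set in $\R^d\setminus\{0\}$ by dilates of a fixed reference annulus, with uniform control. One then sets $L(z)=\d_{(0,z^{-1})}*_G\nu(R)$ for a suitable $R\ge r$. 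The $G(\mu_A)$-invariance of accumulation points (Lemma~\ref{lem-inv-lim}) is obtained not by iterating the Poisson equation but by showing, via Fatou's lemma, that $y\mapsto\d_{(0,y)}*_G\eta(\phi)$ is $\mu_A$-superharmonic and then applying Choquet--Deny. Slow variation of $L$ is then read off from this invariance (with an extra smoothing of $R$ in the periodic case). The integrability of $(1+|u|)^{-\g}$ is indeed a dyadic-summation consequence, as you wrote, once the slowly varying bound is in hand (Lemma~\ref{lem-nu-leb}).
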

Theorem \ref{thm-upp-bound} is a partial generalization of
Proposition 5.2 in \cite{BBE}. It is proved there that this family
of measures converges to the Lebesgue measure of $\Rp$, if $d=1$
 and the closed semi-group generated by the support of  $\mu$ is the whole group $G$.

\medskip

First we prove  that, since  the support of the measure $\mu$
contains contracting and dilating elements, there exists a
compactly supported function $r$ such that the quotient family
$\frac{\delta_{(0,z^{-1})}*_G \nu}{\delta_{(0,z^{-1})}*_G \nu(r)}$
is weakly compact, that is
\begin{prop}\label{prop-weak-bound} Under hypothesis \textbf{(H)},
there exists a bounded compactly supported function $r$ such that
$\d_{(0,z^{-1})}*_G\nu(r)$ is strictly positive  for all $z\geq1$.
Furthermore, for every compact set $K$
 there is a  positive constant $C_K$  such that
\begin{equation}\label{eq-weak-bound}
\d_{(0,z^{-1})}*_G\nu(K)\leq C_K \d_{(0,z^{-1})}*_G\nu(r)\quad \forall\ z\geq 1.
\end{equation}

\end{prop}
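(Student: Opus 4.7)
The approach is to exploit the $\mu^{*n}$-invariance $\nu=\mu^{*n}*_G\nu$, valid for every $n\ge 1$, in order to dominate $\nu(zK)$ by $\nu(zK_0)$ for a single reference compact $K_0=\supp(r)$ chosen independently of $K$. The key observation is that if $V\subset G$ is open with $\mu^{*n}(V)>0$ and the geometric condition
\[
a K+z^{-1}b\subset K_0 \qquad \text{holds for every }(b,a)\in V\text{ and every }z\ge 1,
\]
then $zK\subset g^{-1}(zK_0)$ for each $g=(b,a)\in V$, and the invariance
\[
\nu(zK_0)=\int_G\nu(g^{-1}(zK_0))\,\mu^{*n}(dg)\ge\mu^{*n}(V)\,\nu(zK)
\]
yields \eqref{eq-weak-bound} with $C_K=\mu^{*n}(V)^{-1}$. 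The bulk of the proof is therefore the construction, for each compact $K$, of such a pair $(n,V)$ while keeping $K_0$ fixed.

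I would fix $K_0$ as a wide annulus $\{\rho_1\le |u|\le\rho_2\}$, and given compact $K\subset\R^d\setminus\{0\}$ cover $K$ by finitely many thin annular shells $K_1,\dots,K_N$ of bounded log-width, summing the resulting bounds. For each shell $K_j$ pick a target dilation $a^{(j)}$ that places $a^{(j)}K_j$ well inside $K_0$, with enough geometric slack to tolerate a bounded translation of the $b$-coordinate. Because $\log A_1$ is centered and non-degenerate, the random walk $\log A_1+\cdots+\log A_n$ is recurrent on $G(\mu_A)$, so for a suitable $n_j$ the projection $\mu_A^{*n_j}$ charges a small neighbourhood $I_j$ of $a^{(j)}$. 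Tightness of the conditional law of the associated $b$-coordinate $B_1+A_1B_2+\cdots+A_1\cdots A_{n_j-1}B_{n_j}$ given $A_1\cdots A_{n_j}\in I_j$ then furnishes a bounded Borel set $\Omega_j\subset\R^d$ with $\mu^{*n_j}(\Omega_j\times I_j)>0$, and the slack in the choice of $a^{(j)}$ guarantees the inclusion displayed above for every $g\in V_j:=\Omega_j\times I_j$. Summation over $j$ yields $\nu(zK)\le C_K\,\nu(zK_0)$ uniformly in $z\ge 1$.

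The strict positivity $\delta_{(0,z^{-1})}*_G\nu(r)=\nu(zK_0)>0$ for all $z\ge 1$, needed for the first assertion, is handled by a separate spreading argument: if $\nu(z_0K_0)=0$ for some $z_0\ge 1$, the invariance forces $\nu(g^{-1}(z_0K_0))=0$ for $\mu^{*n}$-almost every $g$ and every $n$, and by varying $n$ and using recurrence of the log-walk the resulting family of translated dilates of $z_0K_0$ can be shown to exhaust $\R^d\setminus\{0\}$ modulo $\nu$-null sets, contradicting $\nu\neq 0$ under \textbf{(H)}. The main technical obstacle is keeping the construction of the second paragraph uniform in $K$: the diameter of $\Omega_j$ furnished by tightness depends on $K_j$, so $K_0$ must have been chosen at the outset with log-width $\log(\rho_2/\rho_1)$ large enough to absorb these diameters after rescaling by $a^{(j)}$, and correspondingly the shells $K_j$ must be cut thin enough to fit. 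This calibration uses only hypothesis \textbf{(H)}.
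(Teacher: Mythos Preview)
Your basic identity is exactly the one the paper uses: from $\nu=\mu^{*n}*_G\nu$ one gets
\[
\nu(zK_0)=\int_G\nu\bigl(g^{-1}(zK_0)\bigr)\,\mu^{*n}(dg)\ \ge\ \mu^{*n}(V)\,\nu(zK)
\]
once $g(zK)\subset zK_0$ for every $g\in V$. So the skeleton is right. But the ``calibration'' you invoke in the last paragraph does not close, and this is a genuine gap.

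You require the inclusion $aK_j+z^{-1}b\subset K_0$ to hold for \emph{every} $z\ge 1$, hence in particular for $z=1$, which forces $K_0$ to absorb the full translation $b$ with $|b|\le D_j:=\mathrm{diam}(\Omega_j)$. There is no rescaling of $b$ by $a^{(j)}$ in this inclusion; the phrase ``after rescaling by $a^{(j)}$'' is misleading. The diameter $D_j$ comes from tightness of the $b$-component of $\mu^{*n_j}$ and depends on $n_j$, which in turn depends on how far the log-walk must travel to reach $a^{(j)}$, which depends on $K$. For $K$ with points of very small or very large modulus, $n_j$ can be arbitrarily large and the $b$-component $B_1+A_1B_2+\cdots+A_1\cdots A_{n_j-1}B_{n_j}$ has no uniform-in-$n_j$ bound under \textbf{(H)}. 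Hence no single annulus $K_0$ fixed in advance can absorb all $D_j$ as $K$ varies, and cutting the shells thinner does not help since that affects only the width of $a^{(j)}K_j$, not $D_j$.

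The paper avoids this circularity by two devices you are missing. First, it does not use recurrence plus tightness but picks \emph{deterministic} elements $g_\pm=(b_\pm,a_\pm)\in\mathrm{supp}\,\mu$ with $a_+>1>a_-$ and works with their powers $g_\pm^n$, whose $b$-components are explicit: $b(g_\pm^n)=b_\pm\frac{a_\pm^n-1}{a_\pm-1}$. A $\delta$-neighbourhood $U_\delta(g_\pm^n)$ then has positive $\mu^{*n}$-measure, with both coordinates controlled. Second, and crucially, the paper proves the inclusion-based bound only for $z$ exceeding a threshold $z_\pm^n=2R(|b(g_\pm^n)|+\delta)$ depending on the building block; for smaller $z$ it uses the already-established strict positivity $\nu(zC(1/R,R))>0$ together with continuity to adjust the constant. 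So the order is: positivity first (constructively, using $g_+^n$ with $n$ tuned to $z$), then the comparison for large $z$, then the extension to all $z\ge 1$.

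Your positivity sketch is also too loose. ``The translated dilates exhaust $\R^d\setminus\{0\}$ modulo $\nu$-null sets'' needs the closed semigroup generated by $\mathrm{supp}\,\mu$ to act with enough orbits on annuli, and passing from $\mu^{*n}$-a.e.\ $g$ to a cover requires a lower-semicontinuity argument you have not supplied. The paper's direct argument---choose $n$ with $a_+^{n-1}\le z\le a_+^n$ and observe that the resulting lower bound $\nu(C(\alpha_R,\beta_R))$ is independent of $z$---is both shorter and avoids the measure-zero subtleties.

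If you want to repair your route, the minimal fix is: prove positivity first (either constructively as in the paper or by a carefully justified support argument), then relax your geometric requirement to $z\ge z_0(K)$ so that $z^{-1}b$ is small, and finally use positivity on the compact range $[1,z_0(K)]$ to adjust $C_K$.
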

\begin{proof}
For all real numbers $\a$ and $\b$, consider the annulus
$$C(\a,\b)=\{u\in\R^d \ |\ \a\leq |u|\leq \b\}.$$
Observe that if either $\a>\b$ or $\b<0$ the set $C(\a,\b)$ is
void. It is easy to check that for all $(b,a)\in G$ the following
implication holds
$$u\in C\left(\frac{\a +|b|}{a},\frac{\b -|b|}{a}\right) \quad \Rightarrow \quad au+b\in C(\a,\b)$$
Let $U$ be an open set in $G$ and $n\in \N$. Since $\nu$ is
invariant with respect to $\mu^{*n}$, we have
\begin{align}\label{eq-inf-gC}
  \d_{(0,z^{-1})}*_G\nu(C(\a,\b))&=\int_{\R^d}\int_G \Ind{C(\a,\b)}(z^{-1}(au+b))\ \mu^{*n}(db\,da)\nu(du) \\
  &\nonumber\geq\int_{\R^d}\int_G \Ind{C(z\a,z\b)}(au+b)\Ind{U}(b,a)\ \mu^{*n}(db\,da)\nu(du) \\
  &\nonumber\geq \mu^{*n}(U)\nu\left(C\left(\max_{(b,a)\in U}\frac{\a z+|b|}{a},\min_{(b,a)\in U} \frac{\b z -|b|}{a}\right)\right).
\end{align}

First we prove that there exists a sufficiently large $R>0$ such
that $\d_{(0,z^{-1})}*_G\nu(C(1/R,R))$ is strictly positive for
all $z\geq 1$. By hypothesis {\bf (H)} the support of $\mu$
contains at least two elements
$g_+=(b_+,a_+)$ and $g_-=(b_-,a_-)$ with $a_+>1>a_-$.\\
Fix $z\geq 1$ and take $n\in \N$ such that $a_+^{n-1}\leq z\leq
a_+^n$. Clearly, if $g^n=(b(g^n),a(g^n))$ is the $n$-th power of
an element $g=(b,a)\in G$ then
$$ a(g^n)=a^n \quad \mbox{and} \quad b(g^n)=\sum_{i=0}^{n-1}a^ib=\frac{a^n-1}{a-1}b.$$
Consider the $\d$-neighborhood  of $g^n$
$$U_\d(g^n)=\left\{(b,a)\in G| \e^{-\d} < a a(g)^{-n}<\e^\d \mbox{ and } |b-b(g^n)|<\d\right\}.$$
Observe that   $\mu^{*n}(U_\d(g^n_+))>0$ for all $\d>0$ and for $(b,a)\in U_\d(g^n_+)$
\begin{align*}
&\frac{z/R +|b|}{a}\leq \frac{a_+^n/R + |b(g^n_+)|+\d}{\e^{-\d}a_+^n}
\leq \e^\d \Big(\frac{1} R+\frac{|b_+|}{a_+-1}+\d\Big)=:\a_R\\
&\frac{ R  z-|b|}{a}\geq \frac{ R  a_+^{n-1}- |b(g^n_+)|-\d}{\e^{\d}a_+^n}
\geq \e^{-\d} \Big( \frac{R}{a_+} -\frac{|b_+|}{a_+-1}-\d\Big)=:\b_R.
\end{align*}
Since $\nu$ is a Radon measure with infinite mass, its support cannot be compact.
Thus, for a fixed $\d$, there exits a   sufficiently large $R$ such that:
$\nu(C(\a_R,\b_R))>0.$ Then by \eqref{eq-inf-gC}:
\begin{equation}
\label{eq-CR}
\d_{(0,z^{-1})}*_G\nu(C(1/R,R))\geq \mu^{*n}(U_\d(g_+^n))\nu(C(\a_R,\b_R))>0
\end{equation}
for all $z\geq1$.

\medskip
For $R>2$ consider the compact sets
$K^n_\pm=C( 2a_\pm^{-n}/R, a_\pm^{-n}R/2).$ Observe  that for
$\d<\log(4/3)$, $(b,a)\in U_\d(g^n_\pm)$ and $z>z^n_\pm:=2R(|b(g^n_\pm)|+ \d)$ :
\begin{align*}
&\frac{z /R +|b|}{a}
\leq z \e^{\d}\frac{1/R +z^{-1}(|b(g^n_\pm)|+\d)}{a^n_\pm}\leq z \frac{2a_\pm^{-n}}{R} \left(\e^{\d} \frac {1 +z^{-1}R(|b(g^n_\pm)|+\d)} 2\right)  \leq z \frac{2a_\pm^{-n}}{R}\\
&\frac{zR-|b|}{a}
\geq z \e^{-\d}\frac{R-z^{-1}(|b(g^n_\pm)|+\d)}{a^n_\pm}
\geq z \frac{a_\pm^{-n}R}{2} \cdot 2\e^{-\d}\left(  1 -z^{-1}\frac{| b(g^n_\pm)|+\d}{R} \right)
\geq z \frac{a_\pm^{-n}R}{2}.
\end{align*}
Thus
 by
\eqref{eq-inf-gC}:
$$\d_{(0,z^{-1})}*_G\nu(C(1/R,R))\geq \mu^{*n}(U_\d(g_\pm^n))\nu(C(z\ 2a_\pm^{-n}/R, z\ a_\pm^{-n}R/2))=C_{K^n_\pm}^{-1}\d_{(0,z^{-1})}*_G\nu(K^n_\pm)$$
for all $z>z^n_\pm$. Since $\d_{(0,z^{-1})}*_G\nu(C(1/R,R))>0$,
the above inequality holds in fact for all $z\geq1$, possibly with
a bigger constant $C_K$ and sufficiently large $R$.

We may assume that $R>2\max\{a_+,1/a_-\}$, then the family of sets $K^n_\pm$ covers $\R^d\setminus \{0\}$. Take
 any function $r\in C_c(\R^d\setminus\{0\})$ such that $r(u)\geq {\bf 1}_{C(1/R,R)}(u)$.
Then a generic compact set $K$ in $\R^d\setminus\{0\}$ is covered by a finite number of compacts  $\{K_i\}_{i\in I}$ of the type $K^n_\pm$  and
\begin{align*}
  \d_{(0,z^{-1})}*_G \nu(K)\leq \sum_{i\in I}\d_{(0,z^{-1})}*\nu(K_i)\leq \left(|I|\max_{i\in I}C_{K_i}\right)\ \d_{(0,z^{-1})}*_G \nu(r)
\end{align*}
for all $z\geq 1$. Moreover, in view of \eqref{eq-CR}, $\d_{(0,z^{-1})}*_G\nu(r)$ is strictly positive
for all $z\ge 1$, which finishes the proof.
\end{proof}

Now  we prove that  $\mu$-invariance of $\nu$ implies that the
accumulation points are invariant by the action of $G(\mu_A)$,
namely we have
\begin{lem}\label{lem-inv-lim}
  Suppose that there exists a function $L(z)$  such that $$\frac{\delta_{(0,z^{-1})}*_G \nu}{L(z)}$$ is weakly
  compact when $z$ goes to $+\infty$, then the accumulation points $\eta$ are invariant for the action of  $G(\mu_A)$.
\end{lem}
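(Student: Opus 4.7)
The plan is to exploit the $\mu$-invariance of $\nu$ to show that any vague accumulation point $\eta$ is \emph{dilation-harmonic} for $\mu_A$, i.e.\ $\eta = \int_{\R^+}\d_{(0,a)}*_G\eta\,\mu_A(da)$, and then to upgrade this to $G(\mu_A)$-invariance by a Choquet--Deny argument for the centered probability $\ov\mu$ on $\R$.

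For the first step, starting from $\nu = \mu *_G \nu$ and applying $\d_{(0,z^{-1})}*_G$ together with the change of variables $v = z^{-1}u$ inside, I would obtain for every $f\in C_c(\R^d\setminus\{0\})$ the identity
$$\frac{\d_{(0,z^{-1})}*_G\nu(f)}{L(z)}=\int_G\frac{\d_{(0,z^{-1})}*_G\nu\bigl(f(a\cdot+z^{-1}b)\bigr)}{L(z)}\,\mu(db\,da).$$
Along any sequence $z_k\to+\infty$ with $\d_{(0,z_k^{-1})}*_G\nu/L(z_k)\to\eta$ vaguely, for each fixed $(b,a)\in G$ the function $v\mapsto f(av+z_k^{-1}b)$ is eventually supported in a fixed compact subset of $\R^d\setminus\{0\}$ and converges uniformly to $v\mapsto f(av)$; vague convergence of the measures then forces the inner quotient to converge pointwise to $(\d_{(0,a)}*_G\eta)(f)$. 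The interchange of limit and $\mu$-integral is justified by truncation: writing $G$ as a large compact $K\subset G$ together with its complement, on $K$ the convergence is uniform, while the contribution of $G\setminus K$ is controlled using Proposition~\ref{prop-weak-bound} (uniform bounds of the normalized measure on compact sets of $\R^d\setminus\{0\}$) and the tail estimate $\mu(G\setminus K)\to 0$.

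Iterating the harmonic identity, $\eta = \int_{\R^+}\d_{(0,a)}*_G\eta\,\mu_A^{*n}(da)$ for every $n\ge 1$. In polar--logarithmic coordinates $\R^d\setminus\{0\}\simeq S^{d-1}\times\R$ via $u\mapsto(u/|u|,\log|u|)$, the push-forward $\wt\eta$ becomes a Radon measure on $S^{d-1}\times\R$ invariant under convolution in the $\R$-factor with the centered probability $\ov\mu$. Since $\ov\mu$ is centered, the walk it generates is recurrent, and by disintegrating $\wt\eta$ over its projection on $S^{d-1}$ and invoking the classical Choquet--Deny/Deny uniqueness theorem (a $\sigma$-finite invariant measure of a recurrent random walk on $\R$ is Haar on the closed subgroup generated by its step-support), each conditional measure is Haar on $H=\langle\supp\ov\mu\rangle$. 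Transferring back gives $\d_{(0,a)}*_G\eta=\eta$ for every $a\in G(\mu_A)$, as required.

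The main obstacle is the dominated-convergence step in deriving the harmonic equation: the integrand in $(b,a)$ is not a priori uniformly bounded on $G$, since large $|b|$ spoils the approximation $f(a\cdot+z^{-1}b)\approx f(a\cdot)$ and extreme values of $a$ enlarge the support of $f(a\cdot)$ in the $v$ variable; the compact-truncation of $G$ combined with the compact-set bounds from Proposition~\ref{prop-weak-bound} is the key technical point. Once the harmonic equation is in hand, the Choquet--Deny step is by now standard.
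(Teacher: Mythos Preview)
Your pointwise limit for fixed $(b,a)$ is correct, but the dominated-convergence step you flag as the ``main obstacle'' is a genuine gap. Weak compactness (or Proposition~\ref{prop-weak-bound}) gives, for each fixed compact $K'\subset\R^d\setminus\{0\}$, a bound $\d_{(0,z^{-1})}*_G\nu(K')/L(z)\le C_{K'}$, but the support of $v\mapsto f(av+z^{-1}b)$ moves with $a$: as $a\to 0$ it escapes to infinity and as $a\to\infty$ it collapses toward the origin, so the relevant $C_{K'}$ is not uniform over $(b,a)\in G\setminus K$. The tail estimate $\mu(G\setminus K)\to 0$ therefore does not by itself control the contribution of $G\setminus K$, and you have not produced a $\mu$-integrable majorant independent of $z$.

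The paper sidesteps this by settling for an \emph{inequality}. For nonnegative $\phi\in C_c^1(\R^d\setminus\{0\})$, after checking (via a Lipschitz estimate) that for each fixed $(b,a)$ the translation by $z_n^{-1}b$ is asymptotically negligible, Fatou's lemma yields
\[
\int_{\R^+} h(ay)\,\mu_A(da)\ \le\ h(y), \qquad h(y):=\d_{(0,y)}*_G\eta(\phi),
\]
so $h$ is a nonnegative $\mu_A$-\emph{super}harmonic function on $\R^+$. Since $\E[\log A_1]=0$ the multiplicative walk driven by $\mu_A$ is recurrent, and Choquet--Deny then forces $h(ay)=h(y)$ for every $a\in G(\mu_A)$, i.e.\ $\d_{(0,a)}*_G\eta=\eta$. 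Working with the function $h$ rather than with the measure $\eta$ also bypasses your disintegration step over $S^{d-1}$, which as stated is delicate: the spherical marginal of a Radon measure on $\R^d\setminus\{0\}$ need not be $\sigma$-finite (think of $\eta$ already homogeneous of degree zero), so the disintegration and the measure-level Choquet--Deny argument would require additional care.
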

\begin{proof}
Let $\eta$ be a limit measure i.e. there is a subsequence
$\{z_n\}$ such that
$$\lim_{n\to\infty}\frac{\d_{(0, z_n^{-1})}*_G\nu(\phi)}{L( z_n)}=\eta(\phi) \quad \forall \phi\in C_c(\R^d\setminus\{0\}).$$
Fix a function $\phi\in C^1_c(\R^d\setminus\{0\})$ and observe
that for all $(b,a)\in G$ there is a compact set $K=K(b)$ and a
constant $C$ such that $$|\phi( z^{-1}(au+b))-\phi( z^{-1}(au))|<
C| z^{-1}b|{\bf 1}_K( z^{-1}(au))$$ for all $z>1$ and $u\in \R^d$.

\medskip

We claim that the function
$$h(y)=\d_{(0,y)}*_G\eta(\phi)=\lim_{n\to\infty}\frac{\d_{(0, z_n^{-1}y)}*_G\nu(\phi)}{L( z_n)}$$ on $\R^+$ is $\mu_A$-superharmonic.
Indeed, take a function $\psi\in C_c(\R^d\setminus\{0\})$ such
that $\psi \ge {\bf 1}_{a^{-1}K}$, then
\begin{eqnarray*}
\lim_{n\to\8} \frac{|\d_{(0, z_n^{-1})}*_G\d_{(b,a)}*_G\nu(\phi)-\d_{(0, z_n^{-1})}*_G\d_{(0,a)}*_G\nu(\phi)|}{L( z_n)}
 &\leq& \lim_{n\to\8} \frac{C| z_n^{-1}b| \nu( a^{-1}z_n K)}{L( z_n)}\\
 &\leq& \lim_{n\to\8} \frac{C| z_n^{-1}b| \d_{(0,z_n^{-1})}*_G \nu( \psi)}{L( z_n)}\\
&=&  C \eta(\psi)\cdot \lim_{n\to \8} | z_n^{-1}b|= 0,
\end{eqnarray*}
hence
\begin{eqnarray*}
\int_G h(ay)\mu_A(da) &=& \int_G \lim_{n\to\8}\frac{\d_{(0, z_n^{-1}y)}*_G\d_{(0,a)}*_G\nu(\phi)}{L( z_n)}\mu(db\,da) \\
   &=& \int_G \lim_{n\to\8}\frac{\d_{(0, z_n^{-1}y)}*_G\d_{(b,a)}*_G\nu(\phi)}{L( z_n)}\mu(db\,da)\\
   &\leq& \lim_{n\to\8}\frac{ \d_{(0, z_n^{-1}y)}*_G\mu*_G\nu(\phi)}{L( z_n)}\quad\mbox{ by Fatou's Lemma}\\
   &=& \lim_{n\to\8}\frac{\d_{(0, z_n^{-1}y)}*_G\nu(\phi)}{L(z_n)}=h(y)
\end{eqnarray*}
Since $h$ is positive,
then by Choquet-Deny theorem $h(ay)=h(y)$ for every $a\in G(\mu_A)$, that is  $$\delta_{(0,a)}*_G\eta(\phi)=\eta(\phi), \qquad \forall \phi\in C_c(\R^d\setminus\{0\}).$$
\end{proof}

\begin{proof}[Proof of Theorem \ref{thm-upp-bound}]
Let $r$ be the function introduced in Proposition \ref{prop-weak-bound} and take any
$R\in C_c(\R^d\setminus \{0\})$ such that $R>r$. Let $L(z)=\d_{(0,z^{-1})}*_G\nu(R)$, so that,
by Proposition \ref{prop-weak-bound},
$\d_{(0,z^{-1})}*_G\nu/L(z)$ is weakly compact when $z$ goes to $+\8$.
It  remains to prove that $L$ is a slowly varying function.
Observe that
$$L(az)=\int_{\R^d} R(z^{-1}a^{-1}u)\nu(du)=\delta_{(0,z^{-1})}*_G\nu(R_a)$$
where $R_a(u)=R(a^{-1}u)$.

Since $R_a$ has compact support thus $L(az)/L(z)$ is bounded. Let
$z_n$ be a sequence such that both $L(az_n)/L(z_n )$ and
$\d_{(0,z_n^{-1})}*_G\nu/L(z_n)$ converge i.e. there is a number
$l$ and a measure $\eta $ such that
\begin{align*}
l=\lim_{n\to\infty}\frac{L(az_n)}{L(z_n)}=\eta(R_a)=\delta_{(0,a^{-1})}*_G\eta(R).
\end{align*}
Observe that, since $\eta(R)=1$, then for every $a$  such that $\delta_{(0,a^{-1})}*_G\eta(R)=\eta(R)$
$$\lim_{z\to+\infty}\frac{L(az)}{L(z)}=1.$$
By Lemma  \ref{lem-inv-lim}, $\eta$ is invariant under the action
of closed group $G(\mu_A)$ generated by the support of $\mu_A$ and
the proof in the case $\mu_A$ aperiodic is completed.

If $G(\mu_A)=\langle e^p\rangle$,  consider the function
$$R(u)=\int_\Rp\Ind{[\e^{-p},\e^{p})}(t)r({u}/{t})\frac{dt}{t}.$$
An easy argument shows that $R$ is in $C_c(\R^d\setminus\{0\})$
and it is bigger of some multiple of $r$. We claim that
$\delta_{(0,a^{-1})}*_G\eta(R)=\eta(R)$ for all $a\in\Rp$ not only
for $a\in G(\mu_A)$. In fact, let $\e^{Kp}\in G(\mu_A)$ such that
$\e^{Kp}>a \e^{p}$ then
\begin{align*}
\delta_{(0,a^{-1})}*_G\eta(R)&=
     \int_{\R^d} \int_{\Rp} \Ind{[a\e^{-p},a\e^{p})}(t)r({u}/{t})\frac{dt}{t}\eta(du)\\
    &=\int_{\R^d} \int_{\Rp}  \Big( \Ind{[a\e^{-p},\e^{Kp})}(t)-\Ind{[a\e^{p},\e^{Kp})}(t)\Big) r(u/t)\frac{dt}{t}\eta(du)\\
    &=\int_{\R^d} \int_{\Rp}\!\! \bigg(   \Ind{[a\e^{-Kp},\e^{p})}(t) r\big({\e^{-(K-1)p}u}/{t}\big)
    \!-\! \Ind{[a\e^{-Kp},\e^{-p})}(t) r\big({\e^{-(K+1)p}u}/{t}\big)\!\!\bigg) \frac{dt}{t}\eta(du)\\
    &=\int_{\R^d} \int_{\Rp}\!\! \bigg(   \Ind{[a\e^{-Kp},\e^{p})}(t) r\big(u/{t}\big)
    \!-\! \Ind{[a\e^{-Kp},\e^{-p})}(t) r\big(u/{t}\big)\!\!\bigg) \eta(du) \frac{dt}{t} \quad \mbox{}\\
    &=\int_{\R^d} \int_{\Rp} \Ind{[\e^{-p},\e^{p})}(t)r(u/t)\frac{dt}{t}\eta(du)=\eta(R),
    \end{align*}
since $\eta$ is $G(\mu_A)$-invariant.
\end{proof}

The following lemma will be used in the sequel to give bounds for
integrals against $\nu$. The statement holds for any Radon measure
$\rho $ on $\R^d\setminus\{0\}$ for which we can control
  the growth of $\delta_{(0,z^{-1})}*_G \rho(K)$.

\begin{lem}\label{lem-nu-leb} Let $\rho$ be a Radon measure and $l(z)$ a nondecreasing function such that
$$ \rho(z<|u|\leq z\e)\leq C_1\,l(z) $$
for a constant $ C_1$ and for every $ z\in \Rp.$
  Then for all  $M>0$ and all non negative functions $f$
  $$\int_{|u|\geq M}f(|u|)\rho(du)\leq C_1\int_{\e^{-1}M}^\infty f(a)l(a)\frac{da}{a},\ \mbox{if}\ f\ \mbox{is nonincreasing}$$
  and
  %for all non negative and nondecreasing function $f$
  $$\int_{0<|u|\leq M}f(|u|)\rho(du)\leq C_1 \int_0^{eM} f(a)l(a)\frac{da}{a}, \ \mbox{if}\ f\ \mbox{is nondecreasing}.$$
In particular under hypothesis \textbf{(H)},
$$ \int_{\R^d}\frac 1{1+|u|^\g}\nu(du)<\8
$$
for all $\g>0$.
\end{lem}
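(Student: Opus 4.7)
The plan is to use a Fubini interchange. For the first (nonincreasing) inequality, the bound $f(a)\le f(z)$ on $(z,ze]$ combined with the hypothesis gives
\begin{equation*}
\int_{z<|u|\le ze} f(|u|)\,\rho(du)\le f(z)\,\rho(\{z<|u|\le ze\})\le C_1\,f(z)\,l(z).
\end{equation*}
I would integrate this inequality against $\tfrac{dz}{z}$ over $z\in[e^{-1}M,\infty)$ and swap the order of integration via Fubini on the left. Each point $a=|u|$ then picks up the weight
\begin{equation*}
\int_{e^{-1}M}^{\infty}\mathbf{1}_{\{z<a\le ze\}}\,\tfrac{dz}{z},
\end{equation*}
which equals $1$ for $a\ge M$ (the support $[a/e,a)$ lies entirely in $[e^{-1}M,\infty)$ and has log-length $1$), equals $\log(ea/M)\in[0,1]$ for $e^{-1}M\le a<M$, and vanishes for $a<e^{-1}M$. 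Discarding the nonnegative middle contribution yields directly
$$\int_{|u|\ge M}f(|u|)\,\rho(du)\le C_1\int_{e^{-1}M}^{\infty}f(a)\,l(a)\,\tfrac{da}{a},$$
with no boundary approximation needed.

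The nondecreasing case is symmetric: one uses $f(a)\le f(ze)$ on $(z,ze]$, integrates for $z\in(0,M]$, applies the same Fubini interchange, and then the change of variable $a=ze$ combined with the monotonicity $l(a/e)\le l(a)$ yields the stated bound on $\int_{0<|u|\le M}f\,d\rho$.

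For the final assertion on $\nu$, Theorem~\ref{thm-upp-bound} supplies a slowly varying function $L$ with $\delta_{(0,z^{-1})}*_G\nu(K)\le C_K L(z)$ for every compact $K\subset\R^d\setminus\{0\}$. Choosing $K=\{1<|u|\le e\}$ gives $\nu(\{z<|u|\le ze\})\le C\,L(z)$, so the hypothesis of the lemma holds with $l(z):=\sup_{w\le z}L(w)$, the nondecreasing (and still slowly varying) majorant of $L$. Applying the first bound to $f(a)=a^{-\g}$ (nonincreasing on $a\ge 1$) yields
\begin{equation*}
\int_{|u|\ge 1}|u|^{-\g}\,\nu(du)\le C\int_{e^{-1}}^{\infty}l(a)\,a^{-\g-1}\,da<\infty,
\end{equation*}
since any slowly varying $l$ satisfies $l(a)=o(a^{\eps})$ for every $\eps>0$, so $l(a)/a^{\g+1}$ is integrable at infinity. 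Combined with $\nu(\{|u|\le 1\})<\infty$ (Radon), this gives $(1+|u|^{\g})^{-1}\in L^1(\nu)$. The only real care is the Fubini bookkeeping that pins down the lower limit $e^{-1}M$ on the right-hand side; the passage from $L$ to its monotone majorant, and the application of Potter/Karamata bounds, are routine.
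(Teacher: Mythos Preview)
Your proof is correct and takes a genuinely different route from the paper's. The paper proves the nonincreasing case by a discrete ``$e$-adic'' shell decomposition: it writes
\[
\int_{|u|\ge M}f(|u|)\,\rho(du)=\sum_{n\ge 0}\int_{Me^n\le |u|<Me^{n+1}}f(|u|)\,\rho(du),
\]
bounds each summand by $f(Me^n)\,\rho(Me^n\le |u|<Me^{n+1})\le C_1 f(Me^n)l(Me^{n-1})$, and then uses monotonicity of $f$ and $l$ to dominate $f(Me^n)l(Me^{n-1})$ by $\int_{Me^{n-1}}^{Me^n}f(a)l(a)\,\frac{da}{a}$, summing back to the stated integral. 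Your argument instead integrates the single-shell inequality continuously against $\frac{dz}{z}$ and applies Tonelli, reading off the weight each point $|u|=a$ receives. This is cleaner: it bypasses the discrete-to-continuous bookkeeping entirely and makes the appearance of the lower limit $e^{-1}M$ transparent. The paper's version, on the other hand, is slightly more elementary in that it never needs to justify an interchange of integrals.

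Your handling of the nondecreasing case and of the final assertion about $\nu$ is also correct; the paper simply asserts the second inequality is proved ``in the same way'' and, for the application to $\nu$, remarks that a slowly varying $L$ is dominated by a multiple of $z^\gamma$ for any $\gamma>0$ --- essentially your argument, though the paper does not spell out the passage to a nondecreasing majorant (a harmless omission, since the Potter bound $L(z)\le C z^{\gamma/2}$ is itself nondecreasing and suffices).
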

\begin{proof}
If $f$ is nonincreasing
\begin{eqnarray*}%\label{nu<leb}
\int_{|u|\geq M}f(|u|)\rho(du)&=&\sum_{n=0}^\8 \int_{M\e^n \le |u| < M\e^{n+1}} f(|u|)\rho(du)\\&\leq& \sum_{n=0}^\8 f(M\e^n) \rho({M\e^{n-1}e \le |u| < M\e^{n-1}\e^2})\\
&\leq & C_1 \sum_{n=0}^\8 f(M\e^n)l(M\e^{n-1})\\
%&=& C_1 \sum_{n=0}^\8 f(M\e^n)l(M\e^{n-1})\int_{M\e^{n-1}}^{M\e^n}\frac{da}{a}\\
&\leq& C_1 \sum_{n=0}^\8 \int_{M\e^{n-1}}^{M\e^n}f(a)l(a)\frac{da}{a}\\  &=& C_1\int_{\e^{-1}M}^\infty f(a)l( a)\frac{da}{a}.
\end{eqnarray*}
Exactly in the same way we prove the second part of the Lemma.

We can apply this result to $\nu$, since any slowly varying function $L(z)$ is smaller of a multiple of $z^\g$ for all $\g>0$.

%In the same way if $f$ is nondecreasing
%Let $C_2$ be such that
%$$ \rho(z\e^{-1}<|u|\leq z)\leq C_2\,l(z) \quad \mbox{ for every } z\in \Rp$$
%then
%\begin{eqnarray*}%\label{nu<leb}
%\int_{0<|u|\leq M}f(|u|)\rho(du)&=&\sum_{n=0}^\8 \int_{M\e^{-(n+1)} \le |u| < M\e^{-n}} f(|u|)\rho(du)
%\\&\leq& \sum_{n=0}^\8 f(M\e^{-n}) \rho({M\e^{-n}\e^{-1} \le |u| < M\e^{-n}})\\
%&\leq & C_2 \sum_{n=0}^\8 f(M\e^{-n})l(M\e^{-n})\\
%&=& C_2 \sum_{n=0}^\8 f(M\e^{-n})l(M\e^{-n})\int_{M\e^{-n}}^{M\e^{-n+1}}\frac{da}{a}\\
%&\leq& C_2 \sum_{n=0}^\8 \int_{M\e^{-n}}^{M\e^{-n+1}}f(a)l(a)\frac{da}{a}\\
%&=& C_2\int_0^{e M} f(a)l( a)\frac{da}{a}
%\end{eqnarray*}
\end{proof}

\subsection{Upper bounds under hypothesis {\bf (G)}}

The main result of this subsection is the following
\begin{prop}
\label{prop2} Assume that hypotheses {\bf (H)} and {\bf (G)} are
satisfied, then there exists a constant $C$ such that for every
bounded nonincreasing nonnegative function $f$ on $\R$
 \begin{equation}
\label{ineq-nu-dx}
\int_{\R^d}f(|u|)\nu(du) < C(\|f\|_\8+  \int_{1/e}^\8 f(y)\frac{dy}y)
\end{equation}

In particular for every $\eps >0$
$$
\int_{\R^d} \frac 1{\log^{1+\eps}(2+|u|)}\nu(du)<\8
$$ and for $z>1/e$
\begin{equation}
\label{nulog}
\nu\big\{ |u|<z \big\} < C (2+\log z).
\end{equation}

%Moreover if $0\in W$, then
%\begin{equation}
%\label{ineq-nu-dx}
%\int_{\R^d}f(|u|)\nu(du) < C+ C\cdot \int_{1/e}^\8 f(x)\frac{dx}x,
%\end{equation}
%for every bounded nonincreasing function $f$ on $\R^+$.
\end{prop}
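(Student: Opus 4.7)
I reduce \eqref{ineq-nu-dx} to the single bound \eqref{nulog}: writing $F(r):=\nu\{|u|\le r\}$, the layer-cake formula gives $\int f(|u|)\,\nu(du)=\int_0^{\|f\|_\infty}F(f^{-1}(t))\,dt$, and the substitution $t=f(y)$ together with $F(r)\le C(2+\log r)$ yields $\int f(|u|)\,\nu(du)\le C\|f\|_\infty+C\int_{1/e}^\infty f(y)\,dy/y$; the $\log^{-(1+\eps)}(2+|u|)$ integrability is then the special case $f(y)=\log^{-(1+\eps)}(2+y)$.

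\noindent To prove \eqref{nulog}, I exploit hypothesis \textbf{(G)} by projecting to a one-dimensional affine recursion on $[0,\infty)$. Set $\tau(u):=\langle u+w_0,w\rangle$ (a scalar after identifying $\R w\cong\R$), so the $\mu$-invariant half-space of \textbf{(G)} is exactly $\tau^{-1}([0,\infty))$. A direct calculation gives $\tau(Au+B)=A\tau(u)+C$ with $C:=\langle B+(1-A)w_0,w\rangle$; the half-space invariance forces $C\ge 0$ $\mu$-a.s., while the remaining clauses of \textbf{(G)} yield $\E[\log^-|C|]<\infty$ and the non-fixed-point condition $\P[As+C=s]<1$ for every $s\in\R$. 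Combined with $\E[\log A]=0$ from \textbf{(H)}, $(A_n,C_n)$ defines a critical nondegenerate one-dimensional recursion $Y_n=A_nY_{n-1}+C_n$ with a unique (up to scale) Radon invariant measure $\rho$ on $[0,\infty)$.

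\noindent The crux of the argument is the 1D logarithmic estimate $\rho([1,z])\le C\log z$. After the change of variables $T:=\log Y$, the increment $T_n-T_{n-1}=\log A_n+\log\bigl(1+C_n/(A_nY_{n-1})\bigr)$ reduces for $T_{n-1}$ large to the zero-mean noise $\log A_n$, while the nonnegativity of $C$ provides an effective reflection at $-\infty$ in $T$-coordinate. Recurrent potential theory for centered $1$D walks then produces an invariant measure for $T$ of uniform density in log-scale above any level, translating to the desired logarithmic growth of $\rho$; the positivity $C\ge 0$ is exactly what rules out the slowly varying factor allowed by the generic 1D critical theory. To lift to \eqref{nulog}, use the inclusion $\{|u|<z\}\subset\{|\tau(u)|<z+|w_0|\}$, so the problem reduces to bounding $\nu$ on slabs $\{|\tau(u)|<a\}$; such slab masses are controlled by combining $\rho([-a,a])=O(\log a)$ with the decay of $\nu$ in the directions orthogonal to $w$ furnished by Theorem \ref{thm-upp-bound}. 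The decisive technical obstacle is the sharp 1D bound itself---the generic theory (e.g.\ \cite{BBE}) only yields $\rho\sim L(z)\log z$ for some slowly varying $L$, and upgrading to pure $\log z$ requires a genuinely renewal-theoretic analysis of the reflected log-chain exploiting $C\ge 0$; a secondary issue, relating $\nu$ on unbounded slabs to $\rho$ on bounded intervals, is handled through the orthogonal-direction decay of $\nu$ supplied by Theorem \ref{thm-upp-bound}.
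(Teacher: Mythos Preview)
Your overall architecture matches the paper's: reduce to a one-dimensional recursion via the projection supplied by hypothesis \textbf{(G)}, exploit that the projected ``$B$'' is nonnegative, and obtain a logarithmic bound for the one-dimensional invariant measure. But two steps in your outline are either incomplete or misdirected.

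\medskip

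\textbf{The 1D logarithmic bound is not proved.} You correctly flag $\rho([1,z])\le C\log z$ as the decisive obstacle, but your argument for it is only a heuristic: $T_n=\log Y_n$ is \emph{not} a random walk (its increments depend on $Y_{n-1}$), so you cannot invoke ``recurrent potential theory for centered 1D walks'' directly, and the phrase ``effective reflection at $-\infty$'' has no content without a concrete comparison. The paper's proof is very different and fully explicit: it uses the Babillot--Bougerol--\'Elie representation
\[
\nu(f)=\int_{\R^+}\E\Big[\sum_{n=0}^{L-1}f(X_n^x)\Big]\nu_L(dx),
\]
then the pointwise bound $X_n^x\ge \e^{S_n}x$ (valid precisely because $B>0$), then the duality lemma to convert the sum up to the first \emph{descending} ladder time into a sum over \emph{ascending} ladder heights $Y_1+\cdots+Y_n$, and finally the renewal theorem to bound the resulting potential. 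The integral condition $\E[\log^-|B|]<\infty$ from \textbf{(G)} enters in a separate estimate, namely to show $\int_0^1|\log x|\,\nu_L(dx)<\infty$; you never mention where this hypothesis is used.

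\medskip

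\textbf{The lifting to $\R^d$ is simpler than you indicate, and your proposed mechanism is wrong.} You write that controlling $\nu$ on the unbounded slab $\{|\tau(u)|<a\}$ requires ``the decay of $\nu$ in the directions orthogonal to $w$ furnished by Theorem \ref{thm-upp-bound}''. Theorem \ref{thm-upp-bound} gives no such orthogonal decay (it is a radial statement), and none is needed. The paper's Step 2 shows that the push-forward $\pi_w(\nu)$ is \emph{equal} to the one-dimensional invariant measure $\nu^w$ (both are $\mu^w$-invariant Radon measures, so uniqueness applies). Hence $\nu\{|\pi_w(u)|<a\}=\nu^w((-a,a))$ directly---the slab mass \emph{is} the interval mass, with no further control required. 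Combined with $f(|u|)\le f(|\pi_w(u)|)$ (since $f$ is nonincreasing), this gives \eqref{ineq-nu-dx} immediately from the 1D case.

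\medskip

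A minor point: you reverse the logical order, deriving \eqref{ineq-nu-dx} from \eqref{nulog} via layer-cake. That direction is legitimate (with the caveat that one must handle the boundary terms in the integration by parts, using that if $\int_{1/e}^\infty f(y)\,dy/y<\infty$ then $f(y)\log y\to 0$), but the paper goes the other way: it proves \eqref{ineq-nu-dx} first and then takes $f=\mathbf{1}_{(-\infty,z]}$ to obtain \eqref{nulog}. Given that the 1D argument naturally produces the integral bound rather than the pointwise one, the paper's order is more economical.
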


Let us recall the following \cite{BBE} explicit construction of
the measure $\nu$. Define a random walk on $\R$
\begin{equation}
\label{cz1}
\begin{split}
S_0&=0,\\
S_n&=\log(A_1\ldots A_n), \quad n\ge 1,
\end{split}
\end{equation}
and consider the downward ladder times of $S_n$:
\begin{equation}
\label{cz2}
\begin{split}
L_0&=0,\\
L_n&= \inf\big\{ k>L_{n-1}; S_k< S_{L_{n-1}} \big\}.
\end{split}
\end{equation}
Let $L=L_1$.  The Markov process $\{X_{L_n}^x\}$ satisfies the recursion
$$
X_{L_n}^x = M_n X_{L_{n-1}}^x+Q_n,
$$
where $(Q_n,M_n)$ is a sequence of $G$-valued i.i.d. random
variables and $(Q_n,M_n)=_d(X_L ,\e^{S_{L}})$. We denote by
$\mu_L$ the law of $(Q_n,M_n)$. It is known that $ -\8< \E S_L <0$
and $\E[\log^+|X_L|]<\8$ (see \cite{Grin2,E}), therefore there
exists a unique invariant probability measure $\nu_L$ of the
process $\{X_{L_n}\}$ and the measure $\nu$ can be written (up to
a constant) as
\begin{equation}
\label{measure}
\nu(f) =  \int_{\R^d}\E\Big[ \sum_{n=0}^{L-1}f(X_n^x) \Big]\nu_L(dx).
\end{equation}
where $X_n^x$ is the process defined  in \eqref{process}.

\begin{proof}[Proof of Proposition \ref{prop2}]
\noindent {\bf Step 1. Assume $d=1$ and $B>0$ a.s.} First we will
prove \ref{ineq-nu-dx} in the simplest one dimensional case, when
$B>0$ a.s., i.e. the positive half-line is invariant under the
action of $\mu$. Then supports of both measures $\nu$ and $\nu_L$
are contained in $\R^+$.

\medskip

%We first  prove that there exists a constant $C$ such that for every bounded %nonincreasing function $f$ on $\R$
 %\begin{equation*}
%\label{ineq-nu-dx}
%\int_{\R^d}f(|u|)\nu(du) < C(\|f\|_\8+  \int_{1/e}^\8 f(y)\frac{dy}y)
%\end{equation*}
Notice that
\begin{eqnarray*}
\nu(f)&=&
\int_{\R^+} \E\bigg[ \sum_{n=0}^{L-1}f(X_n^x)\bigg]\nu_L(dx)\\
&=& \int_{\R^+} \E\bigg[ \sum_{n=0}^{L-1} f\big({A_1A_2\cdots A_nx+A_2\cdots A_n B_1 +\cdots+B_n}\big)\bigg]\nu_L(dx)\\
&\le& \int_{\R^+} \E\bigg[ \sum_{n=0}^{L-1}
f\big(\e^{S_n}x)\big)\bigg]\nu_L(dx).
\end{eqnarray*}
Define the stopping time
%\begin{eqnarray*}
$T = \inf \big\{n:\; S_n > 0\big\}$,
%\end{eqnarray*}
where $S_n = \sum_{k=1}^n \log A_i$. Let $\{Y_i\}$ be a sequence
of i.i.d. random variables with the same distribution as the
random variable $S_T$ (recall $0<\E S_T < \8$). Using the duality
Lemma \cite{F} (see also Lemma \ref{lem-duality}) we obtain
\begin{equation}
\label{nufdual} \nu(f) \le \int_{\R^+} \E\bigg[ \sum_{n=0}^{L-1}
f\big(\e^{S_n}x)\big)\bigg]\nu_L(dx)= \int_{\R^+} \E\bigg[
\sum_{n=0}^{\8} f\big(\e^{Y_1+\cdots +Y_n}x\big)\bigg]\nu_L(dx).
\end{equation}
Let $U$ be the potential associated with the random walk
$Y_1+\ldots+ Y_n$, i.e.
$$ U(a,b) = \E\big[\# n:\; a<Y_1+\ldots + Y_n \le b \big].
$$
By the renewal theorem $U(k,k+1)$ is bounded, thus we have
\begin{eqnarray*}
\nu(f) &\le&
\int_{\R^+}\E\bigg[ \sum_{n=0}^{\8} f\big(\e^{Y_1+\cdots +Y_n}x\big)  \bigg]\nu_L(dx)\\
 &\le& \sum_{k=0}^\8 \int_{\R^+}  U(k,k+1) f\big(\e^kx\big)\nu_L(dx)\\
 &\le& C\sum_{k=0}^\8 \int_{\R^+} f\big(\e^kx\big)\nu_L(dx).
\end{eqnarray*}
Next we divide the integral into two parts. First we assume that
$x>1$:
\begin{eqnarray*}
\sum_{k=0}^\8 \int_1^\8 f\big( \e^kx\big)\nu_L(dx)
&\le& \sum_{k=0}^\8  f\big( \e^k\big)
\le  \sum_{k=-1}^\8 \int_k^{k+1}  f\big( \e^y\big)dy\\
&=&   \int_{-1}^\8  f\big( \e^y\big)dy
=   \int_{1/e}^\8 f(y)\frac{dy}y.
\end{eqnarray*}
Secondly, for $0<x<1$ we    write
\begin{eqnarray*}
\sum_{k=0}^\8 \int_0^1 f\big( \e^kx\big)\nu_L(dx)
&\le& \int_0^1\bigg( \sum_{k=0}^{|\log x|} + \sum_{k= |\log x |}^\8\bigg)  f\big( \e^kx\big)\nu_L(dx)\\
&\le& C \|f\|_\8\int_0^1 \big|\log x\big|\nu_L(dx)+  \sum_{k=0}^\8  f\big( \e^k \big)\\
&\le& C \|f\|_\8\int_0^1\big|\log x\big|\nu_L(dx)  + \int_{1/e}^\8
f(y)\frac{dy}y.
\end{eqnarray*}
Hence to prove \eqref{ineq-nu-dx} we have to justify that the
first term above is finite. For that we use the integral condition
in hypothesis {\bf (G)}, which in this setting says that
$\E\big[|\log^-B_1|\big]<\8$. Notice  that if $x,y\in \R^+$ and
$x+y<1$ then $\big|\log(x+y)\big|
 < \big|\log x\big|$.
We write
\begin{equation}
\label{log-nuL}
\begin{split}
\int_0^1 \big|\log x\big|\nu_L(dx) &= \int \int_{ax+b<1}\big|\log (ax+b)\big|\mu_L(db\,da)\nu_L(dx)\\
 &= \int_{\R^+} \E\Big[\big|\log X_L^x \big|\cdot {\bf 1}_{\{X_L^x <1\}}\Big]\nu_L(dx)\\
&\le   \int_{\R^+} \E\bigg[\Big|\log\Big(\frac{A_1 A_2 \ldots A_L B_1}{A_1}\Big)\Big|  \bigg]\nu_L(dx)\\
 &\le  \E\big[ |S_L|+|\log B_1| + |\log A_1| \big] < \8,
\end{split}
\end{equation}
that completes the proof of \eqref{ineq-nu-dx} in this case.

\medskip

\noindent
{\bf Step 2.}
To generalize the results to higher dimensions, the key
observation  is that the measure $\nu$ can be compared with the
invariant measures for projections of the process $\{X_n\}$ onto
one dimensional subspaces. Their behavior at infinity is already
controlled.

 \medskip

Let  $w\in\R^d\setminus\{0\}$ be the unit vector as in hypothesis {\bf (G)}

 and let $\pi_w$ be the orthogonal projection on the line $\R w=\{sw\}_{s\in\R}$.
Consider the random process on the line
$$ X_n^{w,x_w} = \pi_w(X_n^{x}) = A_n X_{n-1}^{w,{x_w}}+\pi_w(B_n),$$
where $x_w=\pi_w(x)$.
Let $\mu^w$ be the law of $(\pi_w(B_1),A_1)$, then the measure $\mu^w$
satisfies hypothesis {\bf (H)}. Therefore  there exists a unique Radon measure $\nu_w$ on $\R w$,
 which is the invariant measure of the process $\{X_n^{w,x_w}\}$.

We claim that $\nu^w$ is  the projection of $\nu$ onto $\R w$ that is
\begin{equation}
\label{eq-nu-proj}
 \nu^w = \pi_w(\nu).
\end{equation}
As in \eqref{measure}, we may write
$$\nu^w(g) =  \int_{\R}\E\Big[ \sum_{n=0}^{L-1}g(X_n^{w,x}) \Big]\nu_L^w(dx),$$
for any positive function $g$  on $\R$. $\nu_L^w$ is the unique
invariant measure for $\{\pi_w(X_{L_n}^x)\}= \{X_{L_n}^{w,x_w}\}$.
Notice that $\nu_L^w$ is  projection of $\nu_L$ onto $\R w$:
\begin{equation}
\label{3.4}
 \nu_L^w = \pi_w(\nu_L).
\end{equation}
Indeed, $\pi_w(\nu_L)$ is a $\mu_L^w$-invariant probability
measure:
\begin{align*}
\pi_w(\nu_L)(g)&=\int_{\R^d}g(\pi_w(u))\nu_L(du)
=\int_{\R^d}g(\pi_w(au+b))\mu_L(db\,da)\nu_L(du)\\
&=\int_G\int_{\R^d}g(a\pi_w(u)+\pi_w(b)))\mu_L(db\,da)\nu_L(du)=\mu_L^w*\pi_w(\nu_L)(g).
\end{align*}
Then
\begin{align*}
\nu_w(g) &=  \int_{\R}\E\Big[ \sum_{n=0}^{L-1}g(X_n^{w,x}) \Big]\nu_L^w(dx)=\int_{\R^d}\E\Big[ \sum_{n=0}^{L-1}g(X_n^{w,\pi_w(u)}) \Big]\nu_L(du)\\
&=
\int_{\R^d}\E\Big[ \sum_{n=0}^{L-1}g(\pi_w(X_n^u)) \Big]\nu_L(du)=\pi_w(\nu)(g)
\end{align*}

\medskip

\noindent {\bf Step 3. General case.} Let $w_0$ be a multiple
(possibly null) of $w$, such that $0\in
W+w_0$.
 The measure $\mu_0 = \d_{(w_0,1)}*_G \mu*_G\d_{(-w_0,1)}$ satisfies
hypothesis {\bf (H)}, hence there exists a unique
$\mu_0$-invariant Radon measure $\nu_0$, and one can easily prove
that $\nu_0=\d_{w_0}*_{\R^d}\nu$. $\mu_0$ satisfies hypothesis
({\bf G}) and the  $\mu_0$-invariant half-space
  is  $W+w_0+\R^+w=\{u: \pi_w(u)\in \R^+w \}$. This implies that $\pi_w(B_1)\in \R^+ w $ almost surely.
  Moreover,
$$
\int_G \log^-\big|\pi_w(b)\big| \mu_0(db\,da) = \int_G \log^- \big|\pi_w(b  + (1- a)w_0)\big|\mu(db\,da)<\8.
$$
Therefore, $\mu_0^w$ satisfies  the hypothesis of step 1 and
$\nu_0^w$ satisfies \eqref{ineq-nu-dx}. Hence for any nonnegative
and nonincreasing function $f$ on
  $\R$:
\begin{align*}
\int_{\R^d}f(|u|)\nu(du)&\leq \int_{\R^d}f(|\pi_w(u)|)\nu(du)\leq \int_{\R^d}f(|\pi_w(u+w_0)|-|w_0|)\nu(du)\\
&=\int_{\R}f(|x|-|w_0|)\nu_0^w(dx)\leq C' \left(\|f\|_\8+\int_{1/e}^\8 f(y-|w_0|)\frac{dy}{y}\right)\\
&= C' \left(\|f\|_\8+\int_{1/e}^{1/e+|w_0|}f(y-|w_0|)\frac{dy}{y}+\int_{1/e+|w_0|}^\8 f(y-|w_0|)\frac{y-|w_0|}{y}\frac{dy}{y-|w_0|}\right)\\
&\leq C\left(\|f\|_\8+\int_{1/e}^\8 f(y)\frac{dy}{y}\right)
\end{align*}

To prove  \eqref{nulog} we set $f=\Ind{(-\infty,z]}$. Then
$$\nu\{|u|\leq z\}\leq C(2+ \log z).$$

\end{proof}
%\begin{cor}
%\label{5.12}
%There exist constants $C_1$ and $C_2$ (they might be zero) such that
%\begin{eqnarray*}
%\liminf_{x\to\8} \nu(\a x,\b x)&=& C_1,\\
%\limsup_{x\to\8} \frac{\nu(\a x,\b x)}{\log x}&=& C_2.
%\end{eqnarray*}
%\end{cor}
\section{Recurrent potential kernel and solutions of the Poisson equation for general probability measures}
\label{sect-poisson} As it has been observed in the introduction,
 to understand  the asymptotic behavior of the measure $\nu$ one
has to consider of the function
$$f_{\phi}(x) = \int_{\R^d}\phi(u\e^{-x})\nu(du)$$
%as $x$ tedns to infinity,
that is a solution of the Poisson equation
\begin{equation}
\label{poisson-equation} \ov\mu *_\R f = f+\psi
\end{equation}
for a peculiar choice of the function $\psi$, that is
$$\psi_\phi = \ov\mu*_\R f_{\phi}-f_{\phi}.$$

Studying solutions of such equation for a centered probability
measure $\ov \mu$ on $\R$ is a classical problem. Port and Stones
in their papers \cite{PS1} and \cite{PS2} give an explicit formula
describing all bounded from below solutions of
\eqref{poisson-equation} in term of the recurrent potential kernel
$A$ of the function $\psi$. However, to obtain this result,
they suppose either that the measure is spread-out or, if not,
that functions $\psi $ satisfy conditions to restrictive from our
point of view.  Therefore, the
previous results on the decay of the measure $\nu$
  were obtained in \cite{Bu} under the hypothesis that $\ov \mu$ is spread out.
 The goal of this section is to generalize Port and Stone technics to arbitrary measures,
 that do not satisfy any smoothness conditions,
and to an appropriate class of functions $\psi$ depending on the
measure $\ov \mu$.

\medskip
Let $\ov \mu$ be a centered probability measure on $\R$ with
second moment $\s^2 = \int_\R x^2\ov\mu(dx)$ (we do not assume in
this section that $\ov\mu$ is related to $\mu$). If we exclude the
degenerate case when $\ov\mu=\d_0$, the closed group $G(\ov\mu)$
generated by the support of $\ov \mu$ can be either a discrete
group of the type $p\Z$ or $\R$. In the latter case, the measure
$\ov\mu$ is said aperiodic and we set $p=0$.

The Fourier transform of $\ov \mu$
$$\wh{\ov\mu}(\th) = \int_\R \e^{ix\th}\ov\mu(dx)$$
is a continuous bounded function (of period $2\pi/p$, if $\ov\mu$ is periodic), whose Taylor expansion near zero is
$$ \wh{\ov\mu}(\th) = 1+O(\th^2) $$
and such that
$$|1-\wh{\ov\mu}(\th)|>0 \quad \forall \th\in(0,2\pi/p)$$

Consider the set
 $\mathcal{F}(\ov\mu)$  of  functions $\psi$ that can be written as
$$ \psi(x) = \frac 1 {2\pi} \int_\R \e^{-ix\th} \wh{\psi}(\th) d\th$$
for some bounded, integrable, complex valued   function $\wh{\psi}$ verifying the following hypothesis
\begin{itemize}
  \item  its Taylor expansion  near $0$ is
  $$\wh\psi(\th) =   J(\psi)+i\th K(\psi) + O(\th^2)$$ for two constants $J(\psi)$ and $K(\psi)$,
  \item the function $\theta\mapsto \frac{\wh \psi(-\th)}{1-\widehat{\ov\mu}(\theta)}\cdot {\bf 1}_{[-a,a]^c}(\theta)$
is integrable for some $a\in(0,2\pi/p)$.
\end{itemize}

Notice that the first  condition is satisfied  when $\psi$ is a
continuous integrable  function, such that $x^2 \psi$ is
integrable and  whose Fourier transform is integrable. In this
case:
$$ J(\psi)=\int_\R \psi(x)dx\quad\mbox{and}\quad K(\psi)=\int_\R x\psi(x)dx.$$
The second condition is satisfied when  the measure is aperiodic
and  the Fourier transform of $\psi$ has compact support or in the
case the measure $\ov\mu$ is spread-out (since is this case
$\limsup_{|\theta|\to \infty}|\widehat{\ov\mu}(\theta)|<1$). Thus,
the set $\mathcal{F}(\ov\mu)$ contains the set of functions on
which Port and Stone define the recurrent potential and, in many
cases, it is bigger. We will see that even in the periodic case
$\mathcal{F}(\ov\mu)$ contains interesting functions and if we
suppose that $\ov \mu$ is $p$-nonlattice (i.e.
$\liminf_{\th\to\infty}|\th|^p|1-\wh{\ov\mu}(\th)|>0$, see \cite{C,B}), then $\mathcal{F}(\ov\mu)$ contains the Schwartz space.

%We will see that
%Also observe that, since $\wh{\ov\mu}$ is bounded, the two conditions implie that $\wh{\psi}$ is in $L^1(\R)$. As $\psi$ is a multiple of its fourier transform then $\psi$ has to be continuous and vanish at infinity.

\medskip

For $0< \l<1$ let
$$G^\l*\psi = \sum_{n=0}^\8\l^n\ov\mu^{*n}*\psi,$$
where $\ov\mu^{*n}$ denotes the $n$-th convolution power of
$\ov\mu$. One can easily see that the foregoing series is
convergent when $\psi$ is a bounded  measurable function. Next we
define
$$
A^\l \psi = c_\l J(\psi) - G^\l *\psi,
$$
where  $c_\l=G^\l *g(0)$ for some fixed positive function
  $g$ in $\mathcal{F}(\ov\mu)$ such that $J(g)=1$.

We are going to generalize the classical results of Port and Stone
to functions $\psi\in\mathcal{F}(\ov\mu)$ and to show that then
the limit value of $A^\l\psi$ exists and provides solutions of the
Poisson equation \eqref{poisson-equation}. We state here the main
results that are proved in Appendix \ref{port-stone}.

\medskip
\begin{thm}
\label{ps}
Assume that $\psi\in\mathcal{F}(\ov\mu)$.
Then the potential
$$A\psi (x)=
\lim_{\l\nearrow 1}A^\l\psi(x)$$ is a well defined continuous  solution
 of the Poisson equation \eqref{poisson-equation}.
 Furthermore if $J(\psi)\geq 0$ then $A\psi$ is bounded from below and
 \begin{equation}\label{eq-lim-A/x}
  \lim_{x\to\pm\8} \frac{A\psi(x)}{x} = \pm\s^{-2} J(\psi).
\end{equation}
If additionally $J(\psi)=0$, then $A\psi$ is  bounded and has a limit at infinity
\begin{equation}\label{eq-limA}
 \lim_{x\to\pm\8} A\psi(x) = \mp\s^{-2} K(\psi).
\end{equation}
\end{thm}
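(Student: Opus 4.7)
The plan is to adapt the Port--Stone Fourier-analytic approach to the larger class $\mathcal{F}(\ov\mu)$. Fix the reference function $g$ used in the definition of $A^\lambda$, so $J(g)=1$, and split $\psi=J(\psi)g+\psi_0$ with $J(\psi_0)=0$. Then
$$
A^\lambda\psi(x)=J(\psi)\bigl[G^\lambda*g(0)-G^\lambda*g(x)\bigr]-G^\lambda*\psi_0(x)=:J(\psi)\,D^\lambda(x)-G^\lambda*\psi_0(x),
$$
and Fourier inversion gives
$$
D^\lambda(x)=\frac{1}{2\pi}\int(1-e^{-ix\theta})\frac{\wh g(\theta)}{1-\lambda\wh{\ov\mu}(\theta)}\,d\theta,\qquad
G^\lambda*\psi_0(x)=\frac{1}{2\pi}\int e^{-ix\theta}\frac{\wh{\psi_0}(\theta)}{1-\lambda\wh{\ov\mu}(\theta)}\,d\theta,
$$
(restricted to $[-\pi/p,\pi/p]$ in the periodic case). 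The first piece will carry the linear growth, the second the bounded part.

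For $D^\lambda(x)$ I split the $\theta$-integral at $|\theta|=a$, the constant from the definition of $\mathcal{F}(\ov\mu)$. On $|\theta|<a$, the Taylor expansions $1-\wh{\ov\mu}(\theta)=\tfrac12\sigma^2\theta^2+o(\theta^2)$, $\wh g(\theta)=1+O(\theta)$ and $1-e^{-ix\theta}=ix\theta+O(\theta^2)$ show that the integrand is of order $1/\theta$ in a principal-value sense, and an explicit evaluation of the leading term in the limit $\lambda\nearrow 1$ produces the classical expansion $D(x)=|x|/\sigma^2+h_g(x)$ with $h_g$ bounded and continuous. On $|\theta|>a$, the defining integrability $\wh g(-\theta)/(1-\wh{\ov\mu}(\theta))\in L^1(|\theta|>a)$ supplies the dominating function needed to pass to the pointwise limit by dominated convergence.

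The analysis of $G^\lambda*\psi_0$ is parallel. Because $J(\psi_0)=0$, one has $\wh{\psi_0}(\theta)=i\theta K(\psi_0)+O(\theta^2)$, so on $|\theta|<a$ the integrand is again of order $1/\theta$ near the origin; its inverse transform is a bounded continuous function, and the principal-value identity $\frac{1}{2\pi}\int e^{-ix\theta}\chi(\theta)/\theta\,d\theta\to -\tfrac{i}{2}\mathrm{sgn}(x)$ as $|x|\to\infty$ gives $\lim_{x\to\pm\infty}G\psi_0(x)=\pm\sigma^{-2}K(\psi_0)$. Outside $[-a,a]$, dominated convergence produces a pointwise limit and Riemann--Lebesgue sends it to zero at infinity. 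Combining the two contributions, $A\psi=J(\psi)D-G\psi_0$ exists, is continuous, and the asymptotics fall out: if $J(\psi)\geq 0$ the factor $J(\psi)D(x)\sim J(\psi)|x|/\sigma^2$ dominates, yielding both the lower bound and $A\psi(x)/x\to\pm\sigma^{-2}J(\psi)$; if $J(\psi)=0$ then $A\psi=-G\psi_0=-G\psi$ is bounded with limits $\mp\sigma^{-2}K(\psi)$.

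Finally, the Poisson equation follows from the algebraic identity
$$
\ov\mu*_\R A^\lambda\psi-A^\lambda\psi=-\frac{1-\lambda}{\lambda}G^\lambda*\psi+\lambda^{-1}\psi,
$$
upon letting $\lambda\nearrow 1$: $(1-\lambda)G^\lambda*\psi_0\to 0$ because $G^\lambda*\psi_0$ has a finite limit, while $(1-\lambda)G^\lambda*g=O(\sqrt{1-\lambda})$ by a direct Fourier estimate of the Lorentzian $(1-\lambda)/[(1-\lambda)+\lambda\sigma^2\theta^2/2]$. The principal technical obstacle is the dominated-convergence step on $\{|\theta|>a\}$: without a uniform bound of the form $|\wh{\ov\mu}(\theta)|\leq\rho<1$ (which is what Port--Stone extract from spread-outness), one must instead rely on the integrability condition built into the definition of $\mathcal{F}(\ov\mu)$ together with a pointwise comparison of $|1-\lambda\wh{\ov\mu}(\theta)|$ and $|1-\wh{\ov\mu}(\theta)|$ valid uniformly in $\lambda$ close to $1$; the details of this comparison are exactly what force the class $\mathcal{F}(\ov\mu)$ to be defined via an integrability hypothesis on the limiting kernel rather than via smoothness of $\ov\mu$.
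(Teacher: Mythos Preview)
Your approach is the paper's: Fourier inversion for $G^\lambda*\psi$, the split at $|\theta|=a$, and the pointwise inequality $|1-\lambda\wh{\ov\mu}(\theta)|\ge\lambda|1-\wh{\ov\mu}(\theta)|$ as the uniform dominator allowing $\lambda\nearrow1$. Your decomposition $\psi=J(\psi)g+\psi_0$ is a cosmetic variant of the paper's device of studying $G^\lambda*\phi(-y)-G^\lambda*\psi(x-y)$ for a $\phi$ with $J(\phi)=J(\psi)$; both isolate the same singular integral $\frac{i}{2\pi}\int_{|\theta|<a}\frac{\theta\,e^{-iy\theta}}{1-\lambda\wh{\ov\mu}(\theta)}\,d\theta$, whose $\lambda\to1$ and then $y\to\pm\infty$ limits the paper quotes from Port--Stone.

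There is, however, one genuine gap in your Poisson-equation step. Your identity $\ov\mu*_\R A^\lambda\psi-A^\lambda\psi=-\frac{1-\lambda}{\lambda}G^\lambda*\psi+\lambda^{-1}\psi$ is correct, and you argue that the right-hand side tends to $\psi$. But on the left you need $\ov\mu*_\R A^\lambda\psi\to\ov\mu*_\R A\psi$, i.e.\ $\int A^\lambda\psi(x+y)\,\ov\mu(dy)\to\int A\psi(x+y)\,\ov\mu(dy)$, and pointwise convergence of $A^\lambda\psi$ does not suffice. The paper closes this by first establishing the \emph{uniform} bound $|A^\lambda\psi(x)|\le C(1+x^2)$ for all $\lambda\in(1/2,1]$ (a by-product of the Fourier decomposition), which is $\ov\mu$-integrable since $\ov\mu$ has finite second moment; dominated convergence then justifies the interchange. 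Your estimates on $D^\lambda$ and $G^\lambda*\psi_0$ could be sharpened to yield such a bound, but as written the step is incomplete. A smaller point: your claimed expansion $D(x)=|x|/\sigma^2+h_g(x)$ with $h_g$ bounded packages a nontrivial fact that the paper obtains in two stages---first the increment limit $A\psi(x-y)-A\psi(-y)\to\mp xJ(\psi)\sigma^{-2}$ (via the Port--Stone kernel and Riemann--Lebesgue on the remainder), and then a telescoping argument summing unit increments to pass from this local statement to the global two-sided linear bound and hence to \eqref{eq-lim-A/x}.
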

\begin{cor}
\label{ps1}
If  $J(\psi)=0$, then every continuous solution of the Poisson equation bounded from below is
 of the form $$f=A\psi+h$$ where $h$ is constant if $\ov\mu$ is aperiodic, and it is
 periodic of period $p$ if the support of $\ov\mu$ is contained in $p\Z$.
Thus every continuous solution of the Poisson equation is bounded
and the limit of $f(x)$ exists when $x$ goes to $+\infty$ and
$x\in G(\ov\mu)$.

Conversely if there exists a bounded solution of the Poisson
equation, then $A\psi$ is bounded and $J(\psi)=0$. In particular
the first part of corollary is valid.
\end{cor}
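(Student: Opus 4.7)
The plan is to reduce Corollary~\ref{ps1} to a Liouville-type statement for $\ov\mu$-harmonic functions on $\R$: every continuous function $h$ satisfying $\ov\mu *_\R h = h$ and bounded from below must be $G(\ov\mu)$-invariant, that is, constant if $\ov\mu$ is aperiodic and $p$-periodic if the support of $\ov\mu$ lies in $p\Z$. Granted this input, the corollary falls out of Theorem~\ref{ps} in a few lines.

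For the direct direction, assume $J(\psi)=0$. Theorem~\ref{ps} then provides the bounded continuous solution $A\psi$, with the explicit asymptotics $\lim_{x\to\pm\8}A\psi(x)=\mp\s^{-2}K(\psi)$ from \eqref{eq-limA}. If $f$ is any other continuous solution of \eqref{poisson-equation} bounded from below, then $h:=f-A\psi$ is continuous, bounded from below (since $A\psi$ is bounded), and satisfies $\ov\mu*_\R h=h$. The Liouville statement forces $h$ to be constant in the aperiodic case and $p$-periodic in the periodic one, which yields the claimed decomposition $f=A\psi+h$. Restricting $x$ to $G(\ov\mu)$ makes $h$ constant along that sequence, so $\lim_{x\to+\8,\,x\in G(\ov\mu)} f(x)=-\s^{-2}K(\psi)+h(0)$ exists, and in particular $f$ is bounded.

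For the converse, I would argue by contradiction on $J(\psi)$. Let $f$ be a bounded continuous solution of \eqref{poisson-equation}, and set $g:=f-A\psi$, which is continuous and $\ov\mu$-harmonic. If $J(\psi)>0$, relation \eqref{eq-lim-A/x} gives $A\psi(x)\to+\8$ both as $x\to+\8$ and as $x\to-\8$, so $g\to-\8$ at both ends; then $-g$ is a continuous bounded-from-below $\ov\mu$-harmonic function tending to $+\8$, contradicting the Liouville statement. The symmetric argument applied to $-f$, which solves the Poisson equation with $\psi$ replaced by $-\psi$, rules out $J(\psi)<0$. Hence $J(\psi)=0$, and Theorem~\ref{ps} gives that $A\psi$ is bounded, which places us in the setting of the first part.

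The main obstacle is the Liouville-type theorem itself. Since $\ov\mu$ is centered with finite second moment, the random walk $S_n=X_1+\cdots+X_n$ is recurrent in $G(\ov\mu)$, and $h(x+S_n)$ is a martingale whenever $h$ is $\ov\mu$-harmonic. The classical Choquet--Deny theorem handles bounded $h$; extending to merely bounded-from-below $h$ requires a Fatou-type argument along hitting times of neighbourhoods of points of $G(\ov\mu)$, combined with continuity of $h$ to upgrade invariance along the walk into pointwise invariance. Alternatively, one can invoke the description of solutions of the Poisson equation in terms of the recurrent potential kernel provided by Baldi~\cite{Ba}, already cited in Section~\ref{sect-poisson} for precisely this purpose.
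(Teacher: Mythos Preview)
Your proof is correct and follows essentially the same route as the paper: subtract $A\psi$ from the given solution to obtain a continuous $\ov\mu$-harmonic function bounded from below, then invoke Choquet--Deny to force $G(\ov\mu)$-invariance; for the converse, the paper likewise applies Choquet--Deny to $A\psi-f_0$ (bounded from below since $A\psi$ is, by Theorem~\ref{ps}) to conclude that $A\psi$ is bounded and then reads off $J(\psi)=0$ from \eqref{eq-lim-A/x}. Your case split on the sign of $J(\psi)$ and the martingale/recurrence justification of the Liouville input are more explicit than the paper's one-line citation of \cite{D}, but the substance is identical.
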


Using results of Baldi \cite{Ba}  it is possible to give an
explicit decomposition of the solutions of the Poisson equation,
also in the case $J(\psi)\neq 0$. The following result, although
not needed in the sequel, is stated  for completeness.
\begin{cor}\label{cor-form-solution}
For every continuous solution $f$ of the Poisson equation
\eqref{poisson-equation} bounded from below there are two
constants $C_1$ and $C_2$ such that
$$f(x)=A\psi(x)+C_1 J(\psi) x+ C_2 \quad \mbox{ for all $x\in G(\ov\mu)$.}$$
\end{cor}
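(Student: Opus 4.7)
The plan is to reduce the problem to classifying the $\ov\mu$-harmonic functions that arise and then invoke the results of Baldi \cite{Ba}. First I would observe that by Theorem \ref{ps}, $A\psi$ is itself a continuous solution of the Poisson equation \eqref{poisson-equation}. Hence, for any continuous bounded-from-below solution $f$, the difference
$$ h := f - A\psi $$
is a continuous $\ov\mu$-harmonic function on $\R$, i.e. $\ov\mu *_\R h = h$. The task is then to classify such $h$ and show that, on the closed group $G(\ov\mu)$, they must be affine.

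The next step is to control the growth of $h$. From the asymptotics \eqref{eq-lim-A/x} one has $A\psi(x) = O(|x|)$ at infinity, and combined with $f\geq -M$ this yields a one-sided linear estimate $h(x) \geq -M - C(1+|x|)$ on all of $\R$. Note also that $x\mapsto x$ is itself $\ov\mu$-harmonic since $\ov\mu$ is centered, so the natural target is a classification of continuous $\ov\mu$-harmonic functions of at most linear growth. I would invoke Baldi's theorem \cite{Ba} to conclude that every such $h$ has the form
$$ h(x) = \a x + \b + \pi(x), $$
with $\a,\b\in\R$ and $\pi$ a continuous function which is identically zero if $\ov\mu$ is aperiodic and which is $p$-periodic if $G(\ov\mu)=p\Z$. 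Restricted to $G(\ov\mu)$, the periodic term $\pi$ takes the single value $\pi(0)$, which may be absorbed into $\b$. This produces $f(x)=A\psi(x)+\a x+\b$ for every $x\in G(\ov\mu)$. When $J(\psi)\neq 0$, set $C_1:=\a/J(\psi)$ and $C_2:=\b$; when $J(\psi)=0$, Corollary \ref{ps1} already forces $\a=0$, so the right-hand side $C_1 J(\psi)x + C_2$ matches with $C_2=\b$ and $C_1$ arbitrary.

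The main obstacle I expect is the precise application of Baldi's classification in the middle step. The bound at our disposal is one-sided linear growth coming from $f\geq -M$ and the two-sided linear growth of $A\psi$, rather than a two-sided bound on $h$ itself, so one must verify that Baldi's hypotheses are satisfied in this form. A secondary technical point is to confirm, in the lattice case, that the $p$-periodic component $\pi$ genuinely reduces to a constant on $G(\ov\mu)=p\Z$ (so that it can be absorbed into $\b$), but this is essentially tautological since $\pi(np)=\pi(0)$ for every $n\in\Z$. Once these points are in place, the formula for $f$ on $G(\ov\mu)$ follows directly.
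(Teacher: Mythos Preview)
Your overall plan—form the harmonic difference $h=f-A\psi$, classify it, and then restrict to $G(\ov\mu)$—is natural, and the case-split at the end is correct, but the execution differs from the paper's and runs into precisely the obstacle you flag. The paper does not pass through the harmonic function $h$ at all. Instead it invokes Baldi's Corollary~2 directly on the Poisson equation, and the hypothesis Baldi requires there is not a growth bound on $h$ but an \emph{integrability} condition on the particular solution $A\psi$: one fixes the explicit test function $h_0(x)=(\sin x/x)^4$, notes that $\wh{h_0}$ is compactly supported (so $h_0$ belongs to Baldi's class $\mathcal{F}$), and checks
\[
\int_{G(\ov\mu)} |A\psi(x)|\,h_0(x)\,m(dx)\ \le\ C\int_{G(\ov\mu)} (1+|x|)\,h_0(x)\,m(dx)\ <\ \infty,
\]
using the two-sided linear bound $|A\psi(x)|\le C(1+|x|)$ established in the lemma just before the proof of Theorem~\ref{ps}. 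Together with strict aperiodicity of $\ov\mu$ on $G(\ov\mu)$, this is all Baldi's result needs to deliver the affine decomposition of every bounded-below continuous solution on $G(\ov\mu)$.

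The gap in your route is exactly where you place it: from $f\ge -M$ and $|A\psi(x)|\le C(1+|x|)$ you obtain only the one-sided minorant $h(x)\ge -M-C(1+|x|)$, and classifying $\ov\mu$-harmonic functions under a one-sided linear lower bound is not the standard formulation of Baldi's theorem—you would need either to locate the precise statement in \cite{Ba} that accommodates this, or to supply an additional argument. The paper bypasses the issue entirely by exploiting the \emph{two-sided} linear control available on $A\psi$ itself, rather than on $f-A\psi$. That is the idea you are missing: feed the growth of $A\psi$ (not of $h$) into Baldi's integrability hypothesis, and the conclusion follows in one line.
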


The next lemma describes a class of functions in
$\mathcal{F}(\ov\mu)$ that we will be used later on and that have
the same type of decay at infinity as $\ov\mu$. In particular we
see that if $\ov\mu$ has exponential moment then
$\mathcal{F}(\ov\mu)$ contains functions with exponential decay.
\begin{lem}
\label{phi0} Let $Y$ a random variable with the law $\ov\mu$, then
the function
$$r(x)=\esp{|Y-x|-|x|}$$
  is nonnegative  and
\begin{equation*}
%\label{7.1}
\wh r (\th) = C \cdot  \frac{\wh{\ov \mu}(\th) - 1}{\th^2}
\end{equation*}
 for $\th\not= 0$. Moreover
 \begin{itemize}
\item[\refstepcounter{equation}(\theequation)\label{as1}]
if $\E[\e^{\d Y}+\e^{-\d Y}]<\8$, then
 $ r (x)\le C \e^{-\d_1|x|}$ for $\d_1 < \d$;
\item[\refstepcounter{equation}(\theequation)\label{as2}]
if $\E|Y|^{4+\eps}<\8$ for some $\eps>0$ then $ r (x)\le \frac C{1+|x|^{3+\eps}}$.
\end{itemize}
Hence if \eqref{as2} holds, $r$ is in $\mathcal{F}(\ov\mu)$ and
for every function $\zeta\in L^1(\R)$ such that $x^2\zeta$ is
integrable the convolution $r*_\R\zeta$ is in
$\mathcal{F}(\ov\mu)$.
\end{lem}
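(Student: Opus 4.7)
\emph{Step 1: Representation and nonnegativity.} The first move is to derive the explicit formula
$$r(x) = 2\,\E\big[(Y-x)^+\big]\text{ for } x\ge 0, \qquad r(x) = 2\,\E\big[(x-Y)^+\big]\text{ for } x\le 0,$$
obtained by writing $|Y-x|=(Y-x)^++(x-Y)^+$ and using $\E Y = 0$ to get $\E[(Y-x)^+]-\E[(x-Y)^+] = -x$. Nonnegativity is then immediate (and also follows from Jensen: $|x|=|\E(Y-x)|\le \E|Y-x|$). This representation is the cleanest source of the tail bounds. Under \eqref{as1}, the exponential Chebyshev inequality yields $\P[Y>t]\le Ce^{-\delta t}$ for $t\ge 0$, so $\E[(Y-x)^+]=\int_0^\infty\P[Y>x+t]\,dt\le C'e^{-\delta x}$ for $x\ge 0$; symmetrically for $x\le 0$. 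Under \eqref{as2}, Markov gives $\P[|Y|>t]\le C(1+t)^{-(4+\eps)}$, so $\E[(Y-x)^+]\le C' x^{-(3+\eps)}$ for $x\ge 1$, and since $r$ is globally bounded by $2\E|Y|$, the claimed decay $r(x)\le C(1+|x|)^{-(3+\eps)}$ follows.

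\emph{Step 2: Fourier transform.} The tempered-distribution identity $\widehat{|\cdot|}(\theta)=-2/\theta^2$ (away from $\theta=0$) combined with the translation rule gives pointwise for $\theta\neq 0$
$$\widehat{|x-y|-|x|}(\theta)=-\frac{2\bigl(e^{-i\theta y}-1\bigr)}{\theta^2},$$
a bounded continuous function of $\theta$. Since $r\in L^1$ by Step 1 (under either hypothesis), Fubini applied to
$$\hat r(\theta)=\int_\R e^{i\theta x}\,\E\bigl[|x-Y|-|x|\bigr]\,dx$$
gives $\hat r(\theta)=2\bigl(1-\widehat{\ov\mu}(-\theta)\bigr)/\theta^2$, which matches the claimed form up to Fourier sign convention. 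To bypass distributional Fourier theory altogether, one can start instead from the representation $r(x)=2\E[(Y-x)^+]$ on $\R^+$, rewrite the Fourier integral as a double integral via $(Y-x)^+=\int_x^\infty \mathbf 1_{\{Y>t\}}\,dt$, and evaluate it by Fubini; the decay bound from Step 1 justifies all interchanges of integration.

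\emph{Step 3: Membership in $\mathcal{F}(\ov\mu)$, and the convolution.} Under \eqref{as2} the measure $\ov\mu$ has a finite fourth moment, so $\widehat{\ov\mu}\in C^4$ and $\widehat{\ov\mu}(\theta)=1-\frac{\sigma^2}{2}\theta^2+O(\theta^3)$ near $0$ (recall $\E Y=0$). Dividing by $\theta^2$ yields the Taylor expansion $\hat r(\theta)=J(r)+i\theta K(r)+O(\theta^2)$ with explicitly computable constants; boundedness and integrability of $\hat r$ then follow from its continuity at $0$ and its $O(\theta^{-2})$ decay at infinity. The decisive observation for the second condition in the definition of $\mathcal{F}(\ov\mu)$ is that $\ov\mu$ is real, so $\widehat{\ov\mu}(-\theta)=\overline{\widehat{\ov\mu}(\theta)}$ and hence $|\widehat{\ov\mu}(-\theta)-1|=|\widehat{\ov\mu}(\theta)-1|$; consequently
$$\biggl|\frac{\hat r(-\theta)}{1-\widehat{\ov\mu}(\theta)}\biggr|=\frac{2}{\theta^2},$$
which is integrable on $[-a,a]^c$ for every $a\in(0,2\pi/p)$. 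For $r*_\R\zeta$ with $\zeta,\,x^2\zeta\in L^1$, the transform $\hat\zeta$ is bounded and $C^2$, so $\widehat{r*\zeta}=\hat r\,\hat\zeta$ inherits the Taylor expansion at $0$ and the $O(\theta^{-2})$ bound at infinity, and the second condition transfers by multiplying the preceding bound by the bounded $|\hat\zeta(-\theta)|$.

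\emph{Main obstacle.} The only genuinely delicate point is the rigorous justification of the Fourier computation in Step 2, because neither $|x|$ nor $\E|Y-x|$ is separately integrable and the cancellation is only effective after subtraction; the safest route is to sidestep tempered distributions by working directly with the $2\E[(Y-x)^+]$ representation and a double application of Fubini, leveraging the tail estimate proved in Step 1. Everything else is a routine verification of the two bullet conditions defining $\mathcal{F}(\ov\mu)$.
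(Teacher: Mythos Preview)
Your proof is correct and follows essentially the same route as the paper: derive the one-sided representation $r(x)=2\E[(Y-x)^+]$ (resp.\ $2\E[(x-Y)^+]$), use it for nonnegativity and the tail bounds, and compute $\widehat r$ via the distributional identity $\widehat{|\cdot|}=C/\theta^2$ together with $r=(\ov\mu-\delta_0)*|\cdot|$.

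The only noteworthy difference is that for the membership in $\mathcal{F}(\ov\mu)$ the paper simply declares ``it is clear'', whereas you make the key point explicit: since $\ov\mu$ is real, $|\widehat{\ov\mu}(-\theta)-1|=|1-\widehat{\ov\mu}(\theta)|$, hence
\[
\biggl|\frac{\widehat r(-\theta)}{1-\widehat{\ov\mu}(\theta)}\biggr|=\frac{|C|}{\theta^2},
\]
which is integrable on $[-a,a]^c$; the same identity, multiplied by the bounded factor $|\widehat\zeta(-\theta)|$, handles $r*\zeta$. This is exactly what the paper's one-line verification tacitly relies on, so the two arguments coincide.
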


\section{Proofs of Theorems \ref{mthm1} and \ref{mthm2} - existence of the limit}
\label{sect-existence} First we are going to prove the following
result that holds for generic $\ov \mu$ not necessarily aperiodic.
\begin{prop}\label{mprop} Suppose that hypothesis {\bf (H)} is satisfied and either  (\ref{h1}) or (\ref{h1'}) holds.
Then the family of measures $\d_{(0,\e^{-x})}*_G\nu$ is relatively compact in
 the weak topology on $\R^d\setminus \{0\}$ and, when $x$ goes to infinity,
 every limit measure $\eta$ is invariant by the action of $G(\mu_A)$ that is $$\d_{(0,a)}*\eta=\eta \quad \forall a\in G(\mu_A).$$
  Furthermore for any function of the type
  \begin{equation}\label{eq-phispec-def}
{\phi}(u)=\int_\R r (t)\zeta(\e^{t}u)dt ,
  \end{equation}
  where
  \begin{equation}
  \label{def-r}
  r(t)=\esp{|-\log A_1-t|-|t|}
  \end{equation}
and    $\zeta$ is a nonnegative Lipschitz function on $\R^d\setminus\{0\}$ such that $\zeta(u)\leq \e^{-\g|\log|u||}$  for some $\g>0$,
 the limit
$$\lim_{x\to +\8}\int_{\R^d}\phi(u\e^{-x})\nu(du)=:T(\phi)$$

   exists, it is finite and equal to $\eta(\phi)$ for any limit
measure $\eta$.
\end{prop}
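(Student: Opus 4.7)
The plan is to reduce the asymptotic analysis of $\d_{(0,e^{-x})}*_G\nu$ to the study of a Poisson equation on $\R$, to which the machinery of Section \ref{sect-poisson} applies. Set $f_\phi(x) := \int_{\R^d}\phi(ue^{-x})\nu(du)$ for $\phi$ of the form \eqref{eq-phispec-def}; using the $\mu$-invariance of $\nu$ applied to the test function $u \mapsto \phi(ue^{-x})$ and regrouping, one verifies that $f_\phi$ satisfies $\ov\mu *_\R f_\phi - f_\phi = \psi_\phi$ with
\[
\psi_\phi(x) = \int_{\R^d}\!\int_G \bigl[\phi(aue^{-x}) - \phi((au+b)e^{-x})\bigr]\,\mu(db\,da)\,\nu(du),
\]
so that $\psi_\phi$ records the effect of the translation component $b$ of the random affine map.

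Exploiting the specific form $\phi(u) = \int_\R r(t)\zeta(e^t u)\,dt$, a Fubini computation gives the convolution factorization $f_\phi = r *_\R F_\zeta$ on $\R$, where $F_\zeta(y) := \int \zeta(e^{-y}u)\nu(du)$, and hence
\[
\psi_\phi = (\ov\mu *_\R r - r) *_\R F_\zeta.
\]
By Lemma \ref{phi0} the Fourier transform satisfies $\widehat{\ov\mu * r - r}(\theta) = C(1 - \widehat{\ov\mu}(\theta))^2/\theta^2$, so that $\widehat{\psi_\phi}(\theta)$ has a Taylor expansion at the origin with $J(\psi_\phi) = \widehat{\psi_\phi}(0) = 0$, and after division by $1 - \widehat{\ov\mu}(\theta)$ it reduces to $C(1-\widehat{\ov\mu}(\theta))\widehat{F_\zeta}(\theta)/\theta^2$, which is bounded near $0$ and whose integrability at infinity will follow from suitable decay of $F_\zeta$. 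To control the spatial decay of $\psi_\phi$, the two hypotheses are used separately: under \eqref{h1}, $r$ has exponential decay by Lemma \ref{phi0}\,\eqref{as1} and $F_\zeta$ is bounded thanks to Lemma \ref{lem-nu-leb} applied to the slowly varying bound of Theorem \ref{thm-upp-bound}; under \eqref{h1'}, Lemma \ref{phi0}\,\eqref{as2} yields $r(x) = O(|x|^{-(3+\eps)})$ and Proposition \ref{prop2} provides the sharper $\log$-integrability needed to bound $F_\zeta$. In both cases one concludes that $\psi_\phi$ belongs to $\mathcal{F}(\ov\mu)$.

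Since $f_\phi \geq 0$ is continuous, satisfies the Poisson equation, and $\psi_\phi \in \mathcal{F}(\ov\mu)$ with $J(\psi_\phi) = 0$, Corollary \ref{ps1} applies: $f_\phi$ is bounded and equals $A\psi_\phi + h$, where $h$ is constant in the aperiodic case and $p$-periodic in the periodic case, so that $\lim_{x \to +\infty,\, x \in G(\ov\mu)} f_\phi(x)$ exists. To promote the limit to unrestricted $x \to +\infty$ in the periodic case, the specific structure of $\phi$ (a convolution with the smoothing kernel $r$, averaging across cosets of $G(\ov\mu)$) together with the $G(\mu_A)$-invariance of any weak accumulation point of $\d_{(0,e^{-x})}*_G\nu$ rules out oscillation between cosets and yields the desired limit $T(\phi)$.

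For the remaining assertions, Theorem \ref{thm-upp-bound} already provides weak compactness of $\d_{(0,e^{-x})}*_G\nu/L(e^x)$ for a slowly varying $L$, along with $G(\mu_A)$-invariance of each accumulation point via Lemma \ref{lem-inv-lim}. The boundedness of $f_\phi$ just established, applied to a family of $\phi$'s rich enough to separate Radon measures on $\R^d \setminus \{0\}$, forces $L$ to be bounded and thus upgrades the compactness to the unnormalized family. For any accumulation point $\eta = \lim_n \d_{(0,e^{-x_n})}*_G\nu$, passing $x_n \to \infty$ in $f_\phi(x_n) = \d_{(0,e^{-x_n})}*_G\nu(\phi)$ yields $\eta(\phi) = T(\phi)$, showing that the limit value is common to every accumulation point. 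I expect the main technical obstacle to lie in verifying the integrability-at-infinity condition defining $\mathcal{F}(\ov\mu)$ for $\psi_\phi$ without any smoothness assumption on $\mu_A$, and in the periodic case in ruling out residual oscillation of $f_\phi$ across cosets of $G(\ov\mu)$.
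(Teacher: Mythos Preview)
Your overall strategy—Poisson equation plus the potential theory of Section~\ref{sect-poisson}—matches the paper's, but several key steps as written have genuine gaps.

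\textbf{The factorization and $\psi_\phi\in\mathcal F(\ov\mu)$.} You write $\psi_\phi=(\ov\mu*r-r)*F_\zeta$ and argue via $\widehat{F_\zeta}$. The problem is that $F_\zeta=f_\zeta$ is \emph{not} integrable (at best it is bounded, and under \eqref{h1'} it grows like $|x|$ for $x>0$), so $\widehat{F_\zeta}$ is only a distribution and your claim that $\widehat{\psi_\phi}/(1-\widehat{\ov\mu})$ is integrable on $[-a,a]^c$ cannot be read off this way. The paper instead uses the equivalent factorization $\psi_\phi=r*\psi_\zeta$ and works to show that $\psi_\zeta$ itself is in $L^1(\R)$ with $x^2\psi_\zeta\in L^1(\R)$; this is the content of Lemmas~\ref{decayofpsi} and~\ref{7.10}, which exploit the Lipschitz property of $\zeta$ to estimate $|\zeta(e^{-x}au)-\zeta(e^{-x}(au+b))|$ directly. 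Once $\psi_\zeta$ has this decay, Lemma~\ref{phi0} gives $r*\psi_\zeta\in\mathcal F(\ov\mu)$ immediately. Your formula for $J(\psi_\phi)$ is likewise formal: $\int(\ov\mu*r-r)=0$ and $\int F_\zeta=\infty$, so you get $0\cdot\infty$. The paper computes $\int\psi_\zeta=0$ for radial $\zeta$ by a direct change of variables (justified by Fubini via the integrability in Lemmas~\ref{decayofpsi}, \ref{7.10}), and for non-radial $\zeta$ uses domination by a radial $\Phi_\gamma$ together with the converse direction of Corollary~\ref{ps1}.

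\textbf{The translation step is missing.} Lemmas~\ref{decayofpsi} and~\ref{7.10} require, respectively, that $|u|^{-\gamma}$ be $\nu$-integrable (for the $x\to-\infty$ estimates) or that $\mathrm{supp}\,\nu$ lie in a half-space bounded away from~$0$. Neither holds for $\nu$ itself; the paper first replaces $\nu$ by the translate $\nu_0=\delta_{x_0}*_{\R^d}\nu$ of Lemma~\ref{lem-nu0}, carries out Steps~1--2 for $\nu_0$, and then in Step~3 shows $\d_{(0,e^{-x})}*_G\nu$ and $\d_{(0,e^{-x})}*_G\nu_0$ have the same limits on the test functions~$\phi$. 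Your claim that $F_\zeta$ is bounded under \eqref{h1} is not justified without this translation.

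\textbf{The periodic case.} Your ``averaging across cosets'' heuristic does not work: $r$ is not adapted to the lattice $G(\ov\mu)$. The paper's argument is cleaner: from $f_\phi=A\psi_\phi+h_\phi$ with $h_\phi$ $p$-periodic, one uses that $\lim_{x\to-\infty}f_\phi(x)=0$ (since $\nu_0$ has no mass at~$0$) and $\lim_{x\to-\infty}A\psi_\phi(x)=\sigma^{-2}K(\psi_\phi)$ from Theorem~\ref{ps} to conclude that the periodic function $h_\phi$ has a limit at $-\infty$, hence is constant. This forces the full limit $\lim_{x\to+\infty}f_\phi(x)=-2\sigma^{-2}K(\psi_\phi)$ without any restriction to $G(\ov\mu)$.
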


We have already observed that the function $f_{\phi}(x)=
\int_{\R^d}\phi(u\e^{-x})\nu(du)$ is a solution of the Poisson
equation associated to the function $\psi_\phi$. Our aim is to
apply the results of section \ref{sect-poisson}. For we need to
show that $\psi_\phi$ is sufficiently integrable. The upper bound
of the tail of $\nu$ given in section \ref{sect-upper-bound} will
guarantee integrability for positive $x$. To control the function
for $x$ negative we need to perturb slightly the  measures $\mu$
and $\nu$ in order to have more integrability near 0. This is
included in the following lemma
\begin{lemma} \label{lem-nu0}
 For all $x_0\in\R^d$ the translated  measure $\nu_0 = \delta_{x_0}*_{\R^d}\nu$
 is the unique invariant measure for $\mu_0 = \delta_{(x_0,1)}*_G \mu *_{G}\delta_{(-x_0,1)}$
  and it has the same behavior as $\nu$ at infinity, that is:
\begin{equation*}
\lim_{x\to +\8}\bigg(\int_{\R^d}\phi(u\e^{-x})\nu(du)-\int_{\R^d}\phi(u\e^{-x})\nu_0(du) \bigg) = 0
\end{equation*}
for every function $\phi\in C^1_c(\R^d\setminus\{0\})$

Furthermore  there is $x_0\in\R^d$ such that:
\begin{itemize}
\item if $\mu$ satisfies (\ref{h1}) then the same holds for $\mu_0$ and the measure $\nu_0$  satisfies
\begin{equation}
\label{nu0}
\int_{\R^d} \frac 1{|u|^\g}\nu_0(du)<\8 \mbox{ for all  $\g\in(0,1)$ }
\end{equation}

\item if $\mu$ satisfies (\ref{h1'}) then
\begin{equation}\label{nu0'}
 \int_G (|\log a| + \log^+|b|)^{4+\eps} \mu_0(db\,da) < \8,
 \mbox{ supp}\nu_0\subset W_0+\R^+ w \mbox{ and dist}(0;W_0+\R^+ w)>2
\end{equation}
where $W_0$ is an affine subspace of dimension $d-1$ orthogonal to the unit vector $w$
\end{itemize}

\end{lemma}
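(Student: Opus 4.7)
\emph{Step 1: invariance and hypothesis {\bf (H)}.} If $(B,A)\sim\mu$ and $Y\sim\nu$ are independent, then $(B+(1-A)x_0,A)\sim\mu_0$ and $Y+x_0\sim\nu_0$, and the identity $A(Y+x_0)+B+(1-A)x_0=AY+B+x_0$ shows that the law of the left-hand side is $\delta_{x_0}*_{\R^d}(\mu*_G\nu)=\nu_0$, so $\nu_0$ is $\mu_0$-invariant. All requirements of hypothesis {\bf (H)} transfer to $\mu_0$: the law of $A$ is unchanged (so $\E\log A=0$ and $\P[A=1]<1$ survive); the fixed-point condition on $\mu_0$ rewrites, after the shift $y=x-x_0$, as the same condition on $\mu$; and the $(2+\varepsilon)$-moment on $(|\log A|+\log^+|B_0|)$ is controlled by elementary estimates of the form $\log^+|B_0|\le\log^+|B|+|\log A|+C(x_0)$, using $|1-A|\le 1+A$. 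Uniqueness of $\nu_0$ up to a constant then follows from Babillot-Bougerol-Elie.

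\emph{Step 2: asymptotic equivalence.} For $\phi\in C^1_c(\R^d\setminus\{0\})$ supported in $\{r\le|u|\le R\}$,
$$
\int\phi(ue^{-x})\nu(du)-\int\phi(ue^{-x})\nu_0(du)=\int\bigl[\phi(ye^{-x})-\phi\bigl((y+x_0)e^{-x}\bigr)\bigr]\nu(dy).
$$
For $x$ large enough that $e^x>2|x_0|$, the integrand vanishes outside the annulus $\{re^x/2\le|y|\le 2Re^x\}$, on which Lipschitz continuity of $\phi$ gives $|\phi(ye^{-x})-\phi((y+x_0)e^{-x})|\le\|\nabla\phi\|_\infty|x_0|e^{-x}$. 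By Theorem~\ref{thm-upp-bound} this annulus has $\nu$-mass at most $C L(e^x)$, so the difference is $O(|x_0|L(e^x)e^{-x})\to 0$ since $L$ is slowly varying.

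\emph{Step 3: choice of $x_0$ under \eqref{h1}.} The small-moment assumption transfers to $\mu_0$ via $|B_0|^\delta\le C(|B|^\delta+(1+A^\delta)|x_0|^\delta)$. For \eqref{nu0}, rewrite $\int|u|^{-\gamma}\nu_0(du)=\int|y+x_0|^{-\gamma}\nu(dy)$ and apply Fubini over $x_0\in B(0,1)$:
$$
\int_{B(0,1)}dx_0\int|y+x_0|^{-\gamma}\nu(dy)=\int\nu(dy)\int_{B(y,1)}|z|^{-\gamma}dz.
$$
The inner integral is bounded by a constant for $|y|\le 2$ (since $\gamma<1\le d$ makes $|z|^{-\gamma}$ locally Lebesgue-integrable) and by $C|y|^{-\gamma}$ for $|y|>2$, so the whole expression is finite by $\nu(B(0,2))<\infty$ and $\int(1+|y|)^{-\gamma}\nu(dy)<\infty$ (Theorem~\ref{thm-upp-bound}). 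Hence for almost every $x_0\in B(0,1)$ the integrability holds for the chosen $\gamma$; intersecting the resulting full-measure sets over a sequence $\gamma_n\uparrow 1$ and using monotonicity of $|z|^{-\gamma}$ in $\gamma$ on $\{|z|\le 1\}$, together with an elementary tail bound $\int_{|y+x_0|>1}|y+x_0|^{-\gamma}\nu(dy)<\infty$, yields a single $x_0$ valid for all $\gamma\in(0,1)$.

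\emph{Step 4: choice of $x_0$ under \eqref{h1'}.} Writing $w_0=c_0w$, the half-space $W+\R^+w$ equals $\{u:w\cdot u\ge -c_0\}$, and $\mu$-invariance of this half-space combined with uniqueness of $\nu$ forces $\mathrm{supp}\,\nu\subset\{u:w\cdot u\ge -c_0\}$. Taking $x_0=Kw$ with $K>c_0+2$ gives $\mathrm{supp}\,\nu_0=\mathrm{supp}\,\nu+Kw\subset\{v:w\cdot v\ge K-c_0\}=W_0+\R^+w$ with $W_0=(K-c_0)w+w^\perp$, and $\mathrm{dist}(0,W_0+\R^+w)=K-c_0>2$. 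The $(4+\varepsilon)$-moment on $\mu_0$ follows from the same log-arithmetic as in Step~1. The subtlest step is Step~3: choosing $x_0$ generic enough that $\nu$ does not concentrate sharply near $-x_0$; the Fubini argument succeeds precisely because $|z|^{-\gamma}$ is locally Lebesgue-integrable for $\gamma<d$, while $\nu$ decays polynomially at infinity in an integrable sense.
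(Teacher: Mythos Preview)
Your proof is correct and follows the same overall strategy as the paper: the Lipschitz estimate plus the slowly-varying bound from Theorem~\ref{thm-upp-bound} for the asymptotic equivalence, and a Fubini averaging over $x_0$ to locate a translate with good integrability near the origin. The only substantive difference is in the Fubini step under~\eqref{h1}: you average $|u+x_0|^{-\gamma}$ over $x_0\in B(0,1)$ for each $\gamma$ and then intersect the resulting full-measure sets over a sequence $\gamma_n\uparrow 1$, whereas the paper averages the single function $|u|^{-1}(\log|u|)^{-2}\mathbf{1}_{|u|<1}$, which dominates every $|u|^{-\gamma}$ near zero and thus delivers the conclusion for all $\gamma<1$ in one shot. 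Your route is marginally more elementary and in fact avoids a small slip in the paper's displayed inequality---the Lebesgue integral of $|x|^{-1}(\log|x|)^{-2}$ over the \emph{full} ball $|x|<1$ diverges at the boundary $|x|=1$, so the paper's argument really needs a smaller cutoff such as $|u+x|<1/e$ to go through as written.
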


\begin{proof}
 If $\phi\in C^1_c(\R^d\setminus\{0\})$, by the Lipschitz property there exists a compact set  $K=K(x_0)$  such that
 if $|x_0 \e^{-x}|<1$, then
 $$\Big| \phi(u \e^{-x})- \phi((u+x_0) \e^{-x})\Big|\leq C \e^{-x} |x_0| {\bf 1}_K(u \e^{-x})$$
 for every  $u\in\R^d$.
Hence by Theorem \ref{thm-upp-bound},
 \begin{equation}
 \label{nu0-nu}
 \begin{split}
 \lim_{x\to +\infty} \bigg| \int_{\R^d} \phi(u \e^{-x})\nu(du)- \int_{\R^d} \phi(u \e^{-x})\nu_0(du) \bigg|
\le&\, C \lim_{x\to +\infty} \e^{-x} |x_0| \int_{\R^d}{\bf 1}_K(u \e^{-x})\nu(du)
  \\
 \le&\,  C |x_0|\lim_{x\to \infty} \e^{-x}L(\e^x)=0
 \end{split}
 \end{equation}

It is  easy to check that the integrability at infinity of $\mu$
and $\mu_0$ are the same since
$$\int_Gf(a,b+(1-a)x_0)\mu(db\,da)=\int_Gf(a,b)\mu_0(db\,da).$$
Notice also that the projections of $\mu$ and $\mu_0$ onto the
$A$-part coincide i.e. $\mu_A = \pi_A(\mu_0)$, in particular the
measure $\ov \mu$ defined by \eqref{overmu} is the same for both
$\mu$ and $\mu_0$.

In the case (\ref{h1'}) the support of $\nu$ is contained in some
half-space $W+\R^+ w$. Therefore

the support of $\nu_0$ is contained in $W+x_0+\R^+ w$. Let
$W_0=W+x_0$, we may choose $x_0$  in such a way that
dist$(0,W_0+\R^+w)>2$.
\medskip

In the case (\ref{h1}), since by Theorem \ref{thm-upp-bound}
$\d_{(0,\e^{-x})}*\nu_0(K)$ is smaller than a slowly varying
function as~$\nu$, then $|u|^{-\g}$ is $\nu_0$-integrable on
$|u|\ge 1$.

 For integrability near zero observe that
 $$\int_{\R^d}\int_{\R^d} \frac 1{|u+x|(\log|u+x|)^2}{ \bf 1}_{|u+x|<1}{ \bf 1}_{|x|<1}\nu(du)dx\leq\int_{\R^d}\int_{\R^d} \frac 1{|x|(\log|x|)^2}{ \bf 1}_{|x|<1}{ \bf 1}_{|u|<2}\nu(du)dx<\infty.$$
 Then there exists $x_0$ such that
 $$\int_{\R^d}  \frac 1{|u|(\log|u|)^2}{ \bf 1}_{|u|<1}\nu_0(du)=\int_{\R^d}\frac 1{|u+x_0|(\log|u+x_0|)^2}{ \bf 1}_{|u+x_0|<1}\nu(du)<\infty .$$
 Since for $|u|<1$ the function  $|u|^{-\gamma}$ is bounded by a multiple of $|u|^{-1}(\log|u|)^{-2}$,
 \eqref{nu0} follows.
\end{proof}

\medskip

If we suppose that (\ref{nu0}) (or (\ref{nu0'})) hold for $\nu$, we can guarantee that the function
  $\psi_\phi$ decays quickly at infinity, as it is proved in the  two following lemmas, in the first
  one for  the generic case and the second under
  the hypothesis {\bf (G)}.
\begin{lemma}
\label{decayofpsi}
Assume that \eqref{h1} is satisfied and that the function $|u|^{-\gamma}$ is $\nu(du)$ integrable for all $\gamma\in (0,1)$.  Let $\phi$ be a continuous

function on $\R^d$ such that
$$  |\phi(u)|\le \frac C{(1+|u|)^{\b}}$$ for some $\b , C>0$.
Then  $f_\phi$ and $\ov\mu * f_{\phi}$ are well defined and continuous.

Furthermore if $\phi$ is Lipschitz, then
\begin{equation}
\label{int}
\int_\R\int_{G}\int_{\R^d}\Big| \phi(\e^{-x}(au+b)) - \phi(\e^{-x}au) \Big|\nu(du) \mu(db\,da)dx < \8
\end{equation}
and
\begin{equation*}
%\label{wr4}
|\psi_\phi(x)| \le C \e^{-\zeta|x|},
\end{equation*}
for $\zeta<min\{\d/4,\b,1\}$.
\end{lemma}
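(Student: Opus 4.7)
My plan is to use the $\mu$-invariance of $\nu$ to rewrite the Poisson discrepancy $\psi_\phi$ as an integral, over $G\times\R^d$, of the increment $\phi(aue^{-x})-\phi((au+b)e^{-x})$, and then to interpolate between two pointwise bounds on this increment: the Lipschitz bound $\mathrm{Lip}(\phi)\,e^{-x}|b|$, which supplies the exponential factor in $x$, and the decay bound $C(1+a|u|e^{-x})^{-\beta}$, which makes the result $\nu$-integrable in spite of $\nu$ having infinite total mass.

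The first part (well-definedness and continuity of $f_\phi$ and $\ov\mu*_\R f_\phi$) is routine once one observes that $|\phi(ue^{-x})|\le C(1+|u|e^{-x})^{-\beta}$ and that $(1+|u|)^{-\beta}\in L^1(\nu)$: the Radon character of $\nu$ handles $|u|$ bounded, while the tail integrability is supplied by Lemma \ref{lem-nu-leb}, slowly varying functions being dominated by any positive power. Dominated convergence then yields continuity of $f_\phi$, and continuity of $\ov\mu*_\R f_\phi$ follows since $f_\phi$ is locally bounded and $\mu_A$ is a probability measure.

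For the exponential bound, the identity $\nu=\mu*_G\nu$ gives
\[
\psi_\phi(x)\;=\;\int_G\!\int_{\R^d}\!\bigl(\phi(aue^{-x})-\phi((au+b)e^{-x})\bigr)\,\nu(du)\mu(db\,da),
\]
and by Tonelli \eqref{int} reduces to an exponential estimate on the triple integral of the nonnegative integrand $I(x,a,b,u):=|\phi(aue^{-x})-\phi((au+b)e^{-x})|$. For $x\to+\infty$ I would split the $u$-integral at $a|u|\asymp|b|$; on $\{a|u|\ge 2|b|\}$ the interpolation $I\le C(e^{-x}|b|)^\theta(1+a|u|e^{-x})^{-\beta(1-\theta)}$ together with Lemma \ref{lem-nu-leb} and the slowly varying estimate $L(z)\le C_\eps z^\eps$ produces a bound of order $|b|^\theta a^{-\eps}e^{(\eps-\theta)x}$, and H\"older's inequality under \eqref{h1} controls $\E[|B|^\theta A^{-\eps}]$ provided $\theta+\eps\le\d$. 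For $x\to-\infty$ one uses the polynomial decay of $\phi$, namely $I\le C(a|u|e^{-x})^{-\beta}+C(|au+b|e^{-x})^{-\beta}$ away from $0$, to get a contribution of order $a^{-\beta}e^{\beta x}$; the small-$u$ region is absorbed using the hypothesized near-zero bound $\nu(|u|<R)\le CR^\g$ for $\g<1$, which is where the constraints $\zeta<\beta$ and $\zeta<1$ enter. The factor $\d/4$ in the statement is a conservative choice leaving ample slack in the interpolation.

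The main obstacle is the interpolation itself: neither pointwise bound on $I$ is simultaneously $\nu$-integrable and exponentially decaying in $x$. Balancing them via the parameter $\theta$ and then using H\"older in the $\mu$-variables to exploit the full strength of the small moment hypothesis \eqref{h1} is the technical heart of the argument; Tonelli for \eqref{int} and dominated convergence for continuity are then routine.
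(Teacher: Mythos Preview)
Your approach is essentially the paper's: rewrite $\psi_\phi$ via $\mu$-invariance of $\nu$, interpolate the Lipschitz bound against the decay bound to get $|\phi(s)-\phi(r)|\le C|s-r|^\theta\max_{\xi\in\{|s|,|r|\}}(1+\xi)^{-\beta(1-\theta)}$, and split on whether $|au+b|\gtrsim a|u|$.

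Two execution points where the paper differs from your sketch. For $x<0$ the paper does not work with the increment $I$ at all: it simply bounds $|f_{|\phi|}(x)|\le\int C(e^{-x}|u|)^{-\zeta}\nu(du)=Ce^{\zeta x}$ using the hypothesis $|u|^{-\zeta}\in L^1(\nu)$ for $\zeta<\min\{\beta,1\}$, and likewise $|\ov\mu*f_{|\phi|}(x)|\le Ce^{\zeta x}$ once also $\zeta\le\delta$; this is cleaner than your route and gives the correct exponent $\zeta$ rather than $\beta$ (your stated contribution ``of order $a^{-\beta}e^{\beta x}$'' fails when $\beta\ge 1$ or $\beta>\delta$, which the statement does not exclude). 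For $x>0$ you omitted the complementary region $\{a|u|<2|b|\}$, but that is the easy half: there one drops the decay factor, bounds $I\le C(e^{-x}|b|)^\theta$, and controls $\nu\{|u|<2|b|/a\}\le C(1+2|b|/a)^\gamma$ via Theorem~\ref{thm-upp-bound}. On the main region the paper bounds $(1+e^{-x}a|u|)^{-\beta(1-\theta)}\le(e^{-x}a|u|)^{-\gamma}$ and invokes the hypothesis $\int|u|^{-\gamma}\nu(du)<\infty$ directly rather than going through Lemma~\ref{lem-nu-leb}, but your route works as well.
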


\begin{proof}
If $\zeta < \min\{\beta,1\}$, then
$$|f_\phi(x)|\leq \int_{\R^d}\left|\phi(\e^{-x}u)\right|\nu(du) \le\int_{\R^d}\frac C{\e^{-\zeta x}|u|^{\zeta}}\nu(du) \le C \e^{\zeta x}.$$
If we suppose also $\zeta\leq \delta$, we have  that
%\begin{eqnarray*}
$$|\ov\mu*f_\phi(x)|\leq \int_{\R} \big|f_\phi (x+y) \big|\ov\mu(dy)
\le C \e^{\zeta x }\int_{\R^+} a^{-\zeta} \mu_A(da) \le C \e^{\zeta x}.
$$
Thus $\psi_\phi=\ov\mu*_\R f_\phi-f_\phi$ is well defined, continuous and
$$ |\psi_\phi(x)|\le C{\e^{\zeta x}}, $$
that gives the required estimates for negative $x$. In order to
prove \eqref{int} we divide the integral into two parts. For
negative $x$ we use the estimates given above:
\begin{multline*}
\int_{-\infty}^0\int_{G}\int_{\R^d}\Big| \phi(\e^{-x}(au+b)) - \phi(\e^{-x}au) \Big|\nu(du) \mu(db\,da)dx \\
\leq \int_{-\infty}^0\int_{G}\int_{\R^d}\big| \phi(\e^{-x}(au+b)) \big|\nu(du) \mu(db\,da)dx
+ \int_{-\infty}^0\int_{G}\int_{\R^d}\big|\phi(\e^{-x}au) \big|\nu(du) \mu(db\,da)dx\\
\leq \int_{-\infty}^0\int_{\R^d}\big| \phi(\e^{-x}u) \big|\nu(du)dx+
\int_{-\infty}^0\int_{G}\int_{\R^d}\big|\phi(\e^{-x}au) \big|\nu(du) \mu(db\,da)dx\\
 \le   \int_{-\8}^0  \big| f_{|\phi|}(x)\big|dx + \int_{-\8}^0 \big|\ov\mu*f_{|\phi|}(x)\big|dx
< \8.
\end{multline*}
To estimate the integral of $|\phi(\e^{-x}au) -
\phi(\e^{-x}(au+b))|$ for $x$ positive, we use
 the Lipschitz property of $\phi$ to  obtain the  following inequality for $0\leq\theta\leq 1$
\begin{eqnarray*}
|\phi(s) - \phi(r)| &\le&  2|\phi(s) - \phi(r)|^{\th}\max\Big\{
|\phi(s)|^{1-\th}, |\phi(r)|^{1-\th} \Big\}\\
&\le& C|s-r|^{\th}\max_{\xi\in \{ |s|,|r|\}}\frac
1{(1+\xi)^{\beta(1-\th)}}.
\end{eqnarray*}
Again we divide the  integral  into two parts. First we consider
the integral over the set where  $|a u+b| \ge \frac 12 a |u|$. We
choose $\th<\min\{ \d/2 ,1\}$, $\g<\min\{ \th/2,\b(1-\th) \}$.
Then
\begin{multline*}
  \int\int_{|a u+b| \ge \frac 12 |au|} \left|\phi(\e^{-x}au) - \phi(\e^{-x}(a u+b))\right|\mu(db\,da)\nu(du)\\
  \leq \int_G\int_{\R^d} \frac {C|\e^{-x}b|^\theta }{(1+| \e^{-x} au|)^{\beta(1-\th)}}\nu(du)\mu(db\,da)
  \leq \int_G\int_{\R^d}  \frac {C|\e^{-x}b|^\theta }{|\e^{-x} au|^{\g}}\nu(du)\mu(db\,da)\\
  \leq C \e^{-(\theta-\gamma)x}\int_G |b|^\theta |a|^{-\gamma}\mu(db\,da)\ \int_{\R^d} |u|^{-\g}\nu(du)\\
  \leq C \e^{-(\theta-\gamma)x } \int_G\big( |b|^{2\theta} + |a|^{-2\gamma}\big) \mu(db\,da)
  \leq C \e^{-\g x}.
\end{multline*}
If   $|a u+b|< \frac 12 a|u|$ then $|u| \le \frac{2 |b|}a $.
Therefore choosing $\th$ as above and $\g<\frac\d2-\th$, in view
of Theorem \ref{thm-upp-bound}, for the remaining part we have
\begin{multline*}
  \int\int_{|a u+b| \le \frac 12 |au|} \left|\phi(\e^{-x}au) - \phi(\e^{-x}(a u+b))\right|\mu(db\,da)\nu(du)\\
  \leq \int \int_{|u| \le \frac{2 |b|}a}|\e^{-x}b|^\theta  \nu(du)\mu(db\,da)
  \leq C\int_G |\e^{-x}b|^\theta \Big(1+\frac{2 |b|}a\Big)^\g \mu(db\,da) \\
  \leq C \e^{-\theta x}\int_G |b|^\theta \Big(1+\frac{2 |b|}a\Big)^\g\mu(db\,da)\\
  \leq C \e^{-\theta x} \int_G \big( |b|^\theta +|b|^{2(\th+\g)}+a^{-2\g}\big)\mu(db\,da)
  \leq C \e^{-\theta x}.
\end{multline*}
That proves \eqref{int} and
finally
\begin{eqnarray*}|\psi_\phi(x)|&=&\left|\int_G\int_{\R^d}\phi(\e^{-x}au)\nu(du)-\phi(\e^{-x}(au+b))\nu(du)\mu(db\,da)\right|\\
&\leq&\int_G\int_{\R^d}\big|\phi(\e^{-x}au)\nu(du)-\phi(\e^{-x}(au+b))\big|\nu(du)\mu(db\,da)< C \e^{-\zeta|x|}.
\end{eqnarray*}
for $\zeta<\min\{\d/4,\b,1\}$
\end{proof}

%%%%%%%%%%%%%%%%%%%%%%%%%%%%%%%%%%%%%% B Positive
\begin{lem}
\label{7.10}
If \eqref{nu0'} is satisfied and $\phi$ is a continuous function on $\R^d$ such that for $\beta >2$
$$ |\phi(u)| \le\frac C{(1+\log^+|u|)^\beta}, $$
then the functions $f_{\phi}$ and $\ov\mu *f_{\phi}$ are well defined.
Furthermore if $\phi$ is Lipschitz and $\b>4$, then
\begin{equation}
\label{int2}
\int_\R\int_{G}\int_{\R^d}\Big| \phi(\e^{-x}(au+b)) - \phi(\e^{-x}au) \Big|\nu(du) \mu(db\,da)dx < \8.
\end{equation}
and
$$
|\psi_\phi(x)| \le \frac C{1+|x|^{\chi}},
$$ for $\chi = \min\{ \b-1,3+\eps \}$.
\end{lem}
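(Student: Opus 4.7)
The plan is to follow the strategy of Lemma~\ref{decayofpsi}, but with the logarithmic decay of $\phi$ replacing the polynomial one and with the $(4+\eps)$-th moment condition supplied by \eqref{nu0'} playing the role of the small-moment hypothesis. The key inputs are Proposition~\ref{prop2}, which controls $\nu$-integrals of logarithmically decaying functions, and the support restriction $\supp\nu\subset W_0+\R^+ w$ with distance greater than $2$ from the origin.

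For the well-definedness of $f_\phi$ and $\ov\mu*f_\phi$, I would dominate $|\phi(\e^{-x}\cdot)|$ by the nonincreasing function $r\mapsto C/(1+\log^+(\e^{-x}r))^\beta$ and apply Lemma~\ref{lem-nu-leb} with $M=2$ (permitted by \eqref{nu0'}). A direct computation gives $f_{|\phi|}(x)\le C(1+x^+)$ for all $x\in\R$, and by exploiting that on $\supp\nu$ the dominating function is uniformly at most $C/(1+|x|)^\beta$ for $x<0$, one obtains the sharper estimate $f_{|\phi|}(x)\le C/(1+|x|)^{\beta-1}$; the finite first moment of $\ov\mu$ then yields well-definedness and continuity of $\ov\mu*f_\phi$.

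For the pointwise bound on $\psi_\phi$ when $x\le 0$, the $\mu$-invariance of $\nu$ gives, after the triangle inequality inside the integrand, $|\psi_\phi(x)|\le \ov\mu*f_{|\phi|}(x)+f_{|\phi|}(x)$; the second term is already bounded by $C/|x|^{\beta-1}$. For the convolution I would split $\ov\mu$ at $|y|=|x|/2$: on $\{|y|\le |x|/2\}$ one has $x+y\le -|x|/2$ and hence $f_{|\phi|}(x+y)\le C/|x|^{\beta-1}$, whereas on $\{|y|>|x|/2\}$ Chebyshev applied to the $(4+\eps)$-th moment of $\ov\mu$ yields $\ov\mu$-mass $\le C/|x|^{4+\eps}$, which combined with the linear growth bound $f_{|\phi|}(x+y)\le C(1+|y|)$ contributes at most $C/|x|^{3+\eps}$. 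Altogether $|\psi_\phi(x)|\le C/|x|^{\min(\beta-1,\,3+\eps)}=C/|x|^\chi$.

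The main obstacle is the bound for $x\to+\infty$, where $f_{|\phi|}$ grows linearly and one must fully exploit the cancellation inside $\psi_\phi$. Paralleling Lemma~\ref{decayofpsi}, I would split the double integral into Case~1, $\{|au+b|\ge|au|/2\}$, and Case~2, $\{|au+b|<|au|/2\}$. In Case~1 the standard Lipschitz interpolation $|\phi(s)-\phi(r)|\le C(L|s-r|)^\theta(\max(|\phi(s)|,|\phi(r)|))^{1-\theta}$ produces a factor $(\e^{-x}|b|)^\theta(1+\log^+(\e^{-x}a|u|/2))^{-\beta(1-\theta)}$. A genuinely new difficulty (absent from Lemma~\ref{decayofpsi}) is that hypothesis \eqref{h1'} supplies only logarithmic moments of $|B|$, so $\int|b|^\theta\mu$ is infinite for every $\theta>0$; I would circumvent this by truncating at $|b|\le \e^{x/2}$, where $|b|^\theta\le \e^{x\theta/2}$ combines with the prefactor $\e^{-x\theta}$ to give an $\e^{-x\theta/2}$-gain, while on $\{|b|>\e^{x/2}\}$ one drops the Lipschitz bound entirely and uses the pointwise decay of $\phi$ together with Chebyshev applied to $\E[(\log^+|B|)^{4+\eps}]$ to produce a tail of order $1/x^{4+\eps}$. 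Case~2 forces $|u|\le 2|b|/a$, so the $\nu$-integral is controlled by $\nu(|u|\le 2|b|/a)\le C(2+\log(|b|/a))$ via \eqref{nulog}, and the same $|b|$-truncation together with the log moment of $A^{\pm 1}$ yields the required bound $C/(1+x)^\chi$. Finally, \eqref{int2} follows from the pointwise bound since $\chi>1$ whenever $\beta>4$.
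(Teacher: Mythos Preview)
Your proposal is correct and follows essentially the same line as the paper's proof: Proposition~\ref{prop2} and the support condition in \eqref{nu0'} give $f_{|\phi|}(x)\le C/(1+|x|)^{\beta-1}$ for $x<0$ and $f_{|\phi|}(x)\le C(1+x)$ for $x>0$, the convolution with $\ov\mu$ is controlled by splitting at $|y|\sim|x|/2$ using the $(4+\eps)$-moment, and for $x>0$ the integrand is handled via the truncation $|b|\lessgtr\e^{x/2}$ combined with the interpolated Lipschitz bound and \eqref{nulog}.

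Two minor differences in organization are worth noting. First, for $x<0$ the paper obtains $f_{|\phi|}(x)\le C/|x|^{\beta-1}$ by a direct dyadic decomposition of $\{|u|>2\}$ together with \eqref{nulog}; your route via Lemma~\ref{lem-nu-leb} and \eqref{ineq-nu-dx} also works, but the phrase ``the dominating function is uniformly at most $C/(1+|x|)^\beta$ on $\supp\nu$'' only controls the $\|f\|_\infty$ term in \eqref{ineq-nu-dx}---you still need the integral term, which produces the extra loss of one power and explains why the final exponent is $\beta-1$. Second, for $x>0$ the paper splits first by $|b|\lessgtr\e^{x/2}$ (Case~1 vs.\ Cases~2--3) rather than by $|au+b|\lessgtr|au|/2$ as you do. In the paper's Case~1 the $\mu$-invariance of $\nu$ is used to bound the $|au+b|$-term of the Lipschitz max by the same integral as the $|au|$-term, so no $|au+b|$-split is needed there; only on $\{|b|>\e^{x/2}\}$ does the paper introduce the dichotomy $a|u|\lessgtr 2|au+b|$. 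Your nesting (first $|au+b|$, then $|b|$) is equivalent but produces one extra sub-case (small $|b|$ with $|au+b|<|au|/2$), which you correctly dispatch with the Lipschitz factor $\e^{-\theta x/2}$ and \eqref{nulog}.
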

\begin{proof}
Assume first $x<-1$. In view of Proposition \ref{prop2} we have
\begin{eqnarray*}
\big|f_{\phi}(x)\big| &=& \int_{|u|>2}\big|\phi(\e^{-x }u)\big|\nu(du) \le
\int_{|u|>2} \frac{C}{\log^\b(\e^{-x}|u|)} \nu(du)\\
&\le&C\sum_{n=0}^\8 \int_{\e^n\le |u| < \e^{n+1}}\frac 1{(n-x)^\b}\nu(du)\\
&\le&  C\sum_{n>|x|}^\8\frac 1{n^\b} \int_{\e^{n+x}\le |u| < \e^{n+x+1}}\nu(du)\\
&\le&  C\sum_{m=1}^\8 \sum_{m|x|\le n <(m+1)|x|} \frac 1{m^\b|x|^\b} \int_{\e^{n+x}\le |u| < \e^{n+x+1}}\nu(du)\\
&\le&  C\sum_{m=1}^\8  \frac 1{m^\b|x|^\b} \int_{ |u| < \e^{(m+1)|x|}}\nu(du)
\le \frac{C}{|x|^{\b-1}}\sum_{m=1}^\8 \frac 1{m^{\b-1}} \\
&\le& \frac C{|x|^{\b-1}}.
\end{eqnarray*}
To proceed with positive $x$ notice that, by Proposition
\ref{prop2}, for every $y\in \R^+$ and $\b'>2$, arguing as above,
we obtain:
\begin{equation}
\label{est}
\begin{split}
\int_{\R^d}\frac 1{1+\big( \log^+(y|u|) \big)^{\b'}}\nu(du)
&\le \int_{y|u|<1}\nu(du) + \sum_{n=0}^\8 \int_{\e^n\le y|u|< \e^{n+1}} \frac 1{1+ n^{\b'}}\nu(du)\\
&\le C+C|\log y| + C\sum_{n=1}^\8 \frac 1{1+ n^{\b'-1}} \le C(1+|\log y|)
\end{split}
\end{equation}
Hence $|f_{\phi}(x)|\le C (1+x)$ if $x>0$.

\medskip

Finally $f_{\phi}$ is continuous, hence for $x\in(-1,0)$ is bounded. Thus
$$|f_\phi(x)| \leq C  \left((1+|x|){\bf 1}_{x > 0} + \frac 1{1+|x|^{\beta-1}}{\bf 1}_{x \le 0}\right)$$
Consider now the convolution of $f_{\phi}$ with $\ov\mu$. First if $x>0$, then
$$
\big| \ov\mu * f_{\phi}(x) \big| \le C \int_\R \big( 1+|x+y| \big)\ov\mu(dy) \le C\big(1+|x|\big).
$$
Next if $x<-1$, then since $\E|\log A|^{4+\eps}<\8$, we have
\begin{eqnarray*}
\big| \ov\mu * f_{\phi}(x) \big| &\le& \int_\R\frac C{1+|x+y|^{\b-1}}\ov\mu(dy)\\
&\le& \int_{2|y|<|x|}  \frac C{1+|x+y|^{\b-1}}\ov\mu(dy) + \frac C{|x|^{4+\eps}} \int_{2|y|\ge|x|} |y|^{4+\eps}\ov\mu(dy)\\
&\le& \frac C{1+|x|^{\chi_0}},
\end{eqnarray*}
 for $\chi_0=\min\{ \b-1,4+\eps \}$.
The function $\ov\mu * f_{\phi}$ is also continuous, hence finally
we obtain
%
%Let $\chi= 4+\eps$, since $\E{|\log A|^\chi}<\infty$
%\begin{eqnarray*}
%\int (1+|x+y|){\bf 1}_{x+y > 0}\,\ov\mu(dy)&\le& C(1+|x|){\bf 1}_{x > 0}+ {\bf 1}_{x \le 0} \int_\R  (1+ |x|)\frac{1+|y|^\chi}{1+|x|^\chi}+ |y|\frac{1+|y|^{\chi-1}}{1+|x|^{\chi-1}}\ov\mu(dy)\\
%&\le& C \left((1+|x|){\bf 1}_{x > 0} + \frac 1{1+|x|^{\chi-1}}{\bf 1}_{x \le 0}\right).
%\end{eqnarray*}%
%
%Furthermore if $\gamma=\min\{\beta-1,\chi-1\}$
%\begin{eqnarray*}
%  \int_\R \frac 1{1+|x+y|^{\beta-1}}\ov\mu(dy)&=&\frac 1{1+|x|^\gamma}\int_\R \frac {1+|x|^\gamma}{1+|x+y|^{\beta-1}}\ov\mu(dy)\\
%  &\leq& C\frac 1{1+|x|^\gamma}\int_\R \frac {1+|x+y|^\gamma+|y|^\gamma}{1+|x+y|^{\beta-1}}\ov\mu(dy)\\
%  &\leq& C\frac 1{1+|x|^\gamma}\int_\R 1+|y|^\gamma\ov\mu(dy)= C\frac 1{1+x^\gamma}
%\end{eqnarray*}
%then
%\begin{eqnarray*}

$$|\ov\mu*f_\phi(x)|
%&\leq&\int C (1+|x+y|){\bf 1}_{x+y > 0}+ \frac 1{1+(x+y)^{\beta-1}}{\bf 1}_{x+y\leq  0}\,\ov\mu(dy)\\
\leq C \left((1+|x|){\bf 1}_{x > 0} + \frac 1{1+|x|^{\chi_0}}{\bf 1}_{x \le 0}\right).
$$
%\end{eqnarray*}
Proceeding as in the previous lemma we prove
\begin{multline*}
\int_{-\8}^0\int_{G}\int_{\R^d}\Big| \phi(\e^{-x}(au+b)) - \phi(\e^{-x}au) \Big|\nu(du) \mu(db\,da)dx \\
\le \int_{-\8}^0 \big| f_{|\phi|}(x) \big| dx +  \int_{-\8}^0 \big| \ov\mu*f_{|\phi|}(x) \big| dx <\8
\end{multline*}

For $x>0$ we divide the integral of $\big| \phi(\e^{-x}au)- \phi(\e^{-x}(b+au)) \big|$ into several parts
and we  use the following inequality, being a consequence of the Lipschitz property of $\phi$:
$$ |\phi(s)-\phi(r)|\le C |s-r|^{\theta} \max_{\xi\in\{|s|,|r|\}}\frac 1{1+ (\log^+\xi)^{\beta'}}, $$
where $\theta < 1-2/\b$ and $\beta'=\beta(1-\theta)>2$.

\medskip

\noindent
{\bf Case 1.} First we assume  $|b|\le \e^{\frac x2}$.
Then by \eqref{est}
\begin{multline*}
\int_{ |b|\le \e^{\frac  x2} }\int_{\R^d} \Big| \phi(\e^{-x}au)- \phi(\e^{-x}(b+au)) \Big|\nu(du)\mu(db\,da)\\
\le C \int_{|b|\le \e^{\frac x2} }\int_{\R^d}
\e^{-\theta x}|b|^{\theta}\left(\frac 1 {1+ (\log^+(\e^{-x}a|u|))^{\beta'}}+\frac 1 {1+ (\log^+(\e^{-x}|au+b|))^{\beta'}}\right)\nu(du)\mu(db\,da)\\
\le C \e^{-\theta x/2}
\left(\int_{\Rp} \int_{\R^d} \frac 1 {1+ (\log^+(\e^{-x}a|u|))^{\beta'}}\nu(du) \mu_A(da) +\int_{\R^d} \frac 1 {1+ (\log^+(\e^{-x}|u|))^{\beta'}}\nu(du)\right)  \\
\le C \e^{-\theta x/2} \bigg[ 1+x +   \int_{\R^+} |\log a|  \mu_A(da)\bigg]  < C\e^{-\theta x/4}.
\end{multline*}
{\bf Case 2.} We assume $ a|u|<  2|au+b|$ and $|b| > \e^{\frac  x2}$.
Notice first
$$\int_{ |b| >  \e^{\frac  x2} }  \mu(db\,da)\le \frac C{1+x^{4+\eps}}\int_{\R^d}\Big( 1+\big(\log^+|b|\big)^{4+\eps}\Big)\mu(db\,da)
\le \frac C{1+x^{4+\eps}}.
$$
and
\begin{multline*}
\int_{ |b| >  \e^{\frac  x2} } \left(|\log a|+\log|b|\right)\mu(db\,da)\\
\le \frac C{1+x^{3+\eps}}\int_{G}(1+\big(|\log a|+\log^+|b|\big)^{3+\eps}(\log^+|b|+|\log a|)\mu(db\,da)
\le \frac C{1+x^{3+\eps}}.
\end{multline*}
Then, proceeding as previously, we have
\begin{multline*}
\int\int_{\substack{ a|u|<2|au+b| \\ |b|> \e^{\frac  x2} }}\Big| \phi(\e^{-x}au)- \phi(\e^{-x}(b+au)) \Big|\nu(du)\mu(db\,da)\\
\le 2\int\int_{\substack{  a|u|<2|au+b| \\ |b| >  \e^{\frac x2} }}
\max\Big\{\big|\phi(\e^{-x}au)\big|,\big| \phi(\e^{-x}(b+au))\big|\Big\} \nu(du)\mu(db\,da)\\
\le C\int_{ |b| >  \e^{\frac  x2} }\int_{\R^d}
\frac 1{1+ (\log^+(\e^{-x}a|u|))^{\beta}} \nu(du)\mu(db\,da)\\
\le C\int_{ |b| >  \e^{\frac  x2} } \big( x+|\log a| +1 \big)\mu(da\,db)\le \frac C{1+x^{3+\eps}}.
\end{multline*}

{\bf Case 3.}
The last case is  $ a|u|\ge 2|au+b|$ and $|b| > \e^{\frac  x2}$.
Then $|u|<\frac {2|b|}a$ and we obtain
\begin{multline*}
\int\int_{\substack{  a|u|\ge 2|au+b| \\ |b|> \e^{\frac  x2} }}\Big| \phi(\e^{-x}au)- \phi(\e^{-x}(b+au)) \Big|\nu(du)\mu(db\,da)\\
\le C\int_{|b| >  \e^{\frac  x2} }\int_{ |u|<\frac {2|b|}a }\nu(du)\mu(db\,da)\le C\int_{|b| >  \e^{\frac  x2}} \big(1+\log|b|+|\log a|\big)\mu(db\,da)
\le \frac C{1+x^{3+\eps}}.
\end{multline*}
We conclude \eqref{int2} and the required estimates for
$\psi_{\phi}$.
\end{proof}

%%%%%%%%%%%%%%%%%%%%%%%%%%%%%%%%%%%%%%%%%%%%%%%%%%%5

\begin{proof}[Proof of Proposition \ref{mprop}] {\bf Step 1.} First we suppose that $\mu$ satisfies either (\ref{h1}) and  (\ref{nu0}) or (\ref{nu0'}).

We are going to show that for functions of type
\eqref{eq-phispec-def} the limit
$$\lim_{x\to +\8}\int_{\R^d}\phi(u\e^{-x})\nu(du)=T(\phi):=-2\s^{-2}K(\psi_\phi)$$
exists and is finite. To do this we will prove that $\psi_\phi$ is an element of
$\mathcal{F}(\ov\mu)$ and $J(\psi_{\phi})=0$.
 Thus, by Corollary \ref{ps1},
  $f_\phi(x)=\int_{\R^d}\phi(u\e^{-x})\nu(du)$, that is the solution  of the corresponding Poisson equation,
  is bounded and it has a limit when $x$ converge to $+\infty$. We will prove that the limit exists even if values
  of $x$ are not restricted to $G(\ov\mu)$.

First observe that if $\esp{|A_1|^\d+|A_1|^{-\d}}<\8$, then by
lemma \ref{phi0}, for $\beta<\min\{\d,\g\}$,

we have
\begin{eqnarray*}%\label{eq-phispec}
  |{\phi}(u)|&\leq& C\int_\R \e^{-\beta |t|}\e^{-\g|t+\log|u||}dt\leq C\int_\R \e^{-\beta (|t-\log|u||)}\e^{-\g|t|}dt\\
&\leq&C\int_\R \e^{-\beta (-|t|+|\log|u||)}\e^{-\g|t|}dt = C \e^{-\beta|\log|u||}.
\end{eqnarray*}
In the same, way if $\esp{|\log A|^{4+\eps}}<\8$,

then
\begin{eqnarray*}%\label{eq-phispec}
  |{\phi}(u)|&\leq& C\int_\R \frac 1{1+|t-\log|u||^{3+\eps}}\e^{-\g|t|}dt\\
  &\leq&
  \frac C {1+|\log|u||^{3+\eps}}\int_\R\frac {1+|t-\log|u||^{3+\eps}+|t|^{3+\eps}}{1+|t-\log|u||^{3+\eps}}\e^{-\g|t|}dt\\
&\leq&\frac C {1+|\log|u||^{3+\eps}}\int_\R (1+|t|^{3+\eps})\e^{-\g|t|}dt\leq  \frac C {1+|\log|u||^{3+\eps}}.
\end{eqnarray*}
Thus by Lemmas \ref{decayofpsi} and \ref{7.10}, $f_\phi$,
$f_\zeta$,
 $\ov\mu*f_\phi$ and $\ov\mu*f_\zeta$ are well defined. Furthermore since $\zeta$ is Lipschitz $\psi_{\zeta}$ is bounded,
  and $x^2\psi_{\zeta}(x)$ is integrable on $\R$. We cannot guarantee that $\phi$ is Lipschitz, but we can
observe that
\begin{eqnarray*}
  f_{\phi}(x)&=& \int_{\R^d}\int_\RR r (t)\zeta(\e^{-x+t}u)dt\nu(du)\\
  &=& \int_{\R^d}\int_\RR r (t+x)\zeta(\e^{t}u)dt\,\nu(du)\\
  &=& \int_\RR r(t+x)f_\zeta(-t)dt\\
  &=& r *_\R f_\zeta(x)
\end{eqnarray*}
and
$$ \ov\mu*f_\phi(x)=r*_\R (\ov\mu*f_\zeta)(x).$$
Hence
$$\psi_{\phi}= f_\phi-\ov\mu*f_\phi=r*(f_\zeta-\ov\mu*f_\zeta)= r*_\R\psi_\zeta .$$
Therefore, by Lemma \ref{phi0}, $\psi_{\phi}\in {\mathcal
F}(\ov\mu)$.

Furthermore if $\zeta$ is radial then $J(\psi_{\phi})=0$. In fact,
let ${\zeta}_r$ be the radial part of ${\zeta}$, i.e.
${\zeta}_r(|u|) = {\zeta}(u)$, then
\begin{eqnarray*}
\int_\R \psi_{\zeta}(x)dx &=& \int_\R\int_G\int_{\R^d}\bigg[ {\zeta}(au \e^{-x})
-{\zeta}(\e^{-x}(au+b)) \bigg] \nu(du)\mu(db\,da) dx\\
&=&\int_G\int_{\R^d}\int_\R \bigg[ {\zeta}_r\Big( \e^{-x+\log(|au|)}\Big)
-{\zeta}_r\Big(\e^{-x+\log|au+b|}\Big) \bigg] dx\nu(du)\mu(db\,da)\\
&=&\int_G\int_{\R^d}\bigg(\int_\R {\zeta}_r( \e^{-x})dx-\int_\R {\zeta}_r(\e^{-x}) dx\bigg)\nu(du)\mu(db\,da)=0.
\end{eqnarray*}
Observe that we can apply the Fubini theorem since $\zeta$ is Lipschitz and, by Lemmas \ref{decayofpsi} and \ref{7.10},
 the absolute value of the integrand in the second line above is integrable.
Hence $$J(\psi_\phi)=\int_\R \psi_{\phi}(x)dx = \int_\R
r*\psi_{\zeta}(x)dx=\int_\R r(x)dx\cdot \int_\R \psi_{\zeta}(x)dx=
0.$$ If $\zeta$ is radial, then
 by Corollary \ref{ps1}, we have
\begin{equation}\label{eq-dec-fphi}
f_{\phi}=A\psi_\phi+h_\phi
\end{equation}
where $h_\phi$ is a constant if $\mu_A$ is aperiodic and a continuous periodic function if $\mu_A$ is periodic. In any case $f_\phi$ is bounded.

In particular the same holds for $f_{\Phi_\g}$, where
$${\Phi_\g}(u)=\int_\R r (t)\e^{-\g|t+\log|u||}dt.$$
For a generic non-radial function $\phi$ of the type
(\ref{eq-phispec-def}), there exists $\g>0$ such that $\phi\le
\Phi_\g$. Hence $f_{\phi}\le f_{\Phi_\g}$ and $f_{\phi}$ is a
bounded solution of the Poisson equation associated to
$\psi_{\phi}$. Therefore, by Corollary \ref{ps1}, $J(\psi_\phi)=0$
and
$$f_{\phi}=A\psi_\phi+h_\phi$$  for a periodic function $h_\phi$.

Since the measure $\nu$ has no mass in zero $\lim_{x\to -\infty}
f_\phi(x)=0$ and by Theorem  \ref{ps} $$ \lim_{x\to -\infty}
A\psi_\phi(x)=\s^{-2}K(\psi_\phi).$$ Thus when $x$ goes to
$-\infty$ the limit (not necessarily restricted to $G(\ov\mu)$) of
$h_\phi$ exists which is possible only if $h_\phi$ is constant and
is equal to $- \s^{-2}K(\psi_\phi)$. Finally
$$\lim_{x\to +\infty} f_\phi(x)=\lim_{x\to +\infty}A\psi_\phi(x)- \s^{-2}K(\psi_\phi)=-2\s^{-2}K(\psi_\phi).$$

\medskip

\noindent
{\bf Step 2.} Fix a $\g>0$.
Since $\Phi_\g>0$ for every function $\phi\in C_c(\R^d\setminus\{0\})$ there exists a constant $C_\phi$ such that $|\phi|\leq C_\phi\Phi_\g$.
Thus the family of  measures on $\RR^d\setminus\{0\}$
$$\delta_{(0,\e^{-x})}*_G\nu(\phi)=\int_{\R^d}\phi(\e^{-x}u)\nu(du)$$
is bounded hence it is relatively compact  in  the weak topology.

Let $\eta$ be an accumulation point for a subsequence $\{x_n\}$
that is
\begin{equation}
\label{xn_nu}
\lim_{n\to\infty}\d_{(0,\e^{-x_n})}*_G\nu(\phi)=\eta(\phi) \quad \forall \phi\in C_c(\R^d\setminus\{0\}).
\end{equation}
By Lemma \ref{lem-inv-lim} the measure $\eta$ is $G(\mu_A)$
invariant. We claim that for any continuous non negative function
such that $\phi\leq C_{\phi}\Phi_\g$, not necessarily compactly
supported,
$$\eta(\phi)=\lim_{n\to\infty}\delta_{(0,\e^{-x_n})}*_G\nu(\phi).$$
Indeed, fix a large constant $M$, take $\eps>0$ and define
$$
\phi_M(u) = \phi(u)\cdot \int_{\R}{\bf 1}_{[1/((1+\eps)M), M(1+\eps)]}(t|u|)h(t)dt,
$$ where $h\in C_C\big( (1+\eps)^{-1}, 1+\eps \big)$ and $\int_\R h(t)dt=1$.
Then $\phi_M$ is continuous, its support is contained in the annulus  $\{ u:\; 1/(M(1+\eps)^{2}) \le |u| < M(1+\eps)^2 \}$
and moreover $\phi_M(u)=\phi(u)$  for $1/M < |u| < M$.

Notice that by the monotone convergence theorem
$$\eta(\phi)= \lim_{M\to\infty}\int_{\R^d}\phi_M(u)\eta(du)$$
and by \eqref{xn_nu}, for a fixed $M$
$$
\lim_{n\to\infty}\delta_{(0,\e^{-x_n})}*_G\nu(\phi_M)=\eta(\phi_M).
$$
Therefore,
\begin{eqnarray*}
 |\eta(\phi)-\lim_{n\to\infty}\delta_{(0,\e^{-x_n})}*_G\nu(\phi)| &\leq&
 \lim_{M\to \8}\bigg[
  \big|\eta(\phi)-\eta(\phi_M)\big|+
  \Big|\eta(\phi_M) -  \lim_{n\to\infty}\delta_{(0,\e^{-x_n})}*_G\nu(\phi_M) \Big|\\
  && +\Big| \lim_{n\to\infty}\delta_{(0,\e^{-x_n})}*_G\nu(\phi - \phi_M)  \Big|\bigg]\\
  &\le& \lim_{M\to \8}  \lim_{n\to\infty}\delta_{(0,\e^{-x_n})}*_G\nu(|\phi - \phi_M|)
\end{eqnarray*}
and we have to prove that the last limit is 0. For that observe
that for every compact set $K$, there exists a constant $C_K$ such
that ${\bf 1}_K \le C_K\Phi_\g$, hence
$$
\d_{(0,\e^{-x})} *\nu(K) \le C_K',
$$
for every $x\in \R$. Now, applying Lemma \ref{lem-nu-leb} with $l(z)\equiv 1$, since
$\Phi_\g(u)\le \frac C{1+|\log|u||^{3+\eps}}$,

we obtain
\begin{multline*}
\lim_{n\to\8} \d_{(0,\e^{-x_n})} *\nu(|\phi-\phi_M|)
\le \sup_{x\in\R} \int_{\R^d} \Phi_\g(\e^{-x}u)\Big(1-{\bf 1}_{[1/M,M]}(\e^{-x}u)\Big)\nu(du)\\
\le \sup_{x\in\R} C\bigg( \int_{M/e}^\8 \frac 1{1+\big| \log(\e^{-x}a) \big|^{3+\eps}} \frac {da}a
+  \int_0^{e/M}  \frac 1{1+\big| \log(\e^{-x}a) \big|^{3+\eps}} \frac {da}a \bigg)\\
\le C \int_{\R^+} \frac 1{1+| \log a |^{3+\eps}}  \Big( {\bf 1}_{(0,e/M]} +  {\bf 1}_{[M/e,\8)} \Big) \frac {da}a
\end{multline*}
letting $M$ to go to infinity, we conclude.

\medskip

\noindent
{\bf Step 3.} Now we return to the general case of a measure $\mu$ satisfying (\ref{h1}) (or (\ref{h1'})). Then by lemma \ref{lem-nu0}
there exists $\nu_0=\d_{x_0}*_{\R^d}\nu$ for which (\ref{h1}) and (\ref{nu0}) (or (\ref{nu0'})) hold. We have  proved
in Lemma \ref{lem-nu0} that
 $\d_{(0,\e^{-x})}*_G\nu$ and $\d_{(0,\e^{-x})}*_G\nu_0$ have the same behavior on compactly supported functions when $x$ go to
 $+\infty$.
Thus we still need to prove that they have the same limit for
functions $\phi$ of the type (\ref{eq-phispec-def})
   even if they do not have compact support. Observe that both  sets of measures are bounded on compact set, and in particular:
$$  \sup_{x\in\RR}\d_{(0,\e^{-x})}*\nu(1\le|u|\leq e)=K<\infty\quad \sup_{x\in\RR}\d_{(0,\e^{-x})}*\nu_0(1\le|u|\leq e)=K_0<\infty.$$
By the Lipschitz property
$$\left|\zeta(\e^{t-x}u)-\zeta(\e^{t-x}(u+x_0))\right|\leq
 C  \e^{\th(t-x)} \left(\zeta(\e^{t-x}u)^{1-\theta}+\zeta(\e^{t-x}(u+x_0))^{1-\theta}\right)$$ for all $\theta\in
 [0,1]$.
 Hence by Lemma \ref{lem-nu-leb}
\begin{multline*}
  |\d_{(0,\e^{-x})}*\nu(\phi)-\d_{(0,\e^{-x})}*\nu_0(\phi)|
  \leq \int_\R \int_{\R^d} r(t)\left|\zeta(\e^{t-x}u)-\zeta(\e^{t-x}(u+x_0))\right|dt \nu(du)\\
  \leq C\bigg[ \int_x^\8 r(t)dt +
  \int_{-\8}^x r(t)\e^{\th(t-x)}\left(\int_{\R^d}\zeta(\e^{t-x}u)^{1-\theta}\nu(du)+\int_{\R^d} \zeta(\e^{t-x}u)^{1-\theta}\nu_0(du)\right)dt\bigg]\\
  \leq C \bigg[ \int_x^\8 r(t)dt +
  \int _{-\8}^x r(t)\e^{\th(t-x)} \left(\int_0^\8 \e^{-\g(1-\theta)|\log a|} \frac{da}a\right)dt\bigg]\\
  \leq C \bigg[ \int_x^\8 r(t)dt +
  \int_{-\8}^x  r(t)\e^{\th(t-x)} dt\bigg]\leq C \bigg[ \int_x^\8 r(t)dt +
  \int_{-\8}^x  r(t) dt\bigg]
\end{multline*}
Since, by the dominated convergence theorem, the last term goes to zero when $x$ go $+\infty$, we conclude.
\end{proof}
\begin{proof}[Proof of Theorem \ref{mthm1} - existence of the limit]
We assume that $\mu_A$ is aperiodic. Then in view of Proposition
\ref{mprop} the family of measures $\d_{(0,\e^{-x})}*_G \nu$ is
relatively compact  in the weak topology and if  $\eta$ is an
accumulation point, then it is $\Rp$ invariant i.e. for every
$a\in \Rp$
 $$ \int_{\R^d}\phi(au)\eta(du) =\int_{\R^d}\phi(u)\eta(du).$$
Therefore there exists a probability measure $\Sigma_\eta$ on $S^{d-1}$ and a constant $C_\eta$ such that $$\eta = C_\eta \Sigma_\eta \otimes \frac{da}a$$
 (see \cite{FS}, Proposition 1.15).
It remains to prove that $C_\eta$ and $\Sigma_\eta$ do not depend
on  $\eta$. We have proved in Proposition
\ref{mprop} that for  any function  $\phi$ of type
(\ref{eq-phispec-def}), the limit exists (that is
 it does not depend on the subsequence along which one tends to $\eta$)
$$\lim_{x\to +\8}\int_{\R^d}\phi(u\e^{-x})\nu(du)=\eta(\phi)=T(\phi).$$
Consider the radial function ${\Phi_\g}(u)=\int_\R r (t)\e^{-\g|t+\log|u||}dt$,
since $\eta(\Phi_\g)= C_\eta \int_{\R^*_+}\Phi_\g(a)\frac{da}a$. Then:
$$C_\eta=\frac{T({\Phi_\g})}{  \int_{\R^*_+}\Phi_\g(a)\frac{da}a}$$
does not depend on $\eta$. Set $C_+=C_\eta$.

For any Lipschitz function $\zeta_0$ of $S^{d-1}$ consider the function $\zeta(u)=\e^{-\g|\log|u||}\zeta_0(u/|u|)$ and
$${\phi}(u)=\int_\R r (t)\zeta(\e^{t}u)dt
= {\Phi_\g}(u)=\int_\R r (t)\e^{-\g|t+\log|u||}\zeta_0(e^{-t}u/|e^{-t}u|)dt=\Phi_\g(u)\zeta_0(u/|u|).$$
Then
$$\eta(\phi)=C_+\Sigma_\eta(\zeta_0)  \cdot \int_{\R^*_+}\Phi_\g(a)\frac{da}a=T(\phi)$$
thus $\Sigma_\eta(\zeta_0)$ does not depend on $\eta$.
\end{proof}

\begin{proof}[Proof of Theorem \ref{mthm2} - existence of the limit]
We proceed as in the previous proof.
Assume that $\mu_A$ is aperiodic and $G(\mu_A)=\langle e^p\rangle$. Let
$D=\{x:\; 1\le x<e^p\}$ be the fundamental domain for the action of $G(\mu_A)$ on $\R^d\setminus\{0\}$. Then every
$x\in \R^d\setminus\{0\}$ can be uniquely written as $x=aw$, where $a\in G(\mu_A)$
and $w\in D$. Denote by $l$ the counting measure on $G(\mu_A)$.

\medskip

First we will prove that the exists a radial function $\Phi$ of type \eqref{eq-phispec-def}
such that for every $a\in \R_+^*$, $\d_{(0,a)}*_G l(\Phi) = l(\Phi)$, where now
$l$ is considered as a measure on $\R^d\setminus\{0\}$, i.e. for any unit vector
$x\in \R^d$: $l(\Phi)=\sum_{k\in \Z}\Phi(e^{kp}x)$. For radial functions
$l(\Phi)$ does not depend on the choice of $x$.
Indeed, let $R\in C_C(\R^d\setminus\{0\})$ be the function defined in the proof
of Theorem \ref{thm-upp-bound}. Clearly, we may assume that $R$ is radial.
Then, since $\d_{(0,a)}*_Gl(R)=l(R)$ for all $a\in G(\mu_A)$ and repeating
the argument given at the end of the proof of Theorem \ref{thm-upp-bound},
we prove that $\d_{(0,a)}*_Gl(R)=l(R)$ for all $a\in \R^*_+$.
Take $\Phi(u)=\int_\R r(t)R(e^t u)dt$, then for every $a\in \R^*_+$ we have
%\begin{eqnarray*}
$$\d_{(0,a)}*_Gl(\Phi) = \int_\R r(t) \d_{(0,e^t)}*_G \d_{(0,a)}*_Gl(R)dt
 = \int_\R r(t) \d_{(0,e^t)}*_Gl(R)dt = l(\Phi).
$$%\end{eqnarray*}
 Let $\eta$ be an accumulation point of the family of measures $\d_{(0,e^{-x})}*_G\nu$.
 Then, in view of Proposition \ref{mprop}, $\eta$ is $G(\mu_A)$ invariant. Therefore there
 exists a probability measure $\Sigma_{\eta}$ on $D$ and a constant $C_{\eta}$ such that
 $\eta = C_{\eta}\Sigma_{\eta}\otimes l$. By Proposition \ref{mprop}, the value $T(\Phi)=\eta(\Phi)$
 does not depend on $\eta$. Notice that
 \begin{eqnarray*}
 T(\Phi) &=& C_{\eta} \int_D\int_{G(\mu_A)}\Phi(aw)l(da)\Sigma_{\eta}(dw)
 = C_{\eta} \int_D\d_{(0,|w|)}*_Gl(\Phi)\Sigma_{\eta}(dw)\\
 &=& C_{\eta} \int_Dl(\Phi)\Sigma_{\eta}(dw) = C_{\eta}l(\Phi).
 \end{eqnarray*}
 Therefore $C_{\eta}=\frac{T(\Phi)}{l(\Phi)}$ does not depend on $\eta$. Define
 $C_+=C_{\eta}$. Finally, we have
 \begin{eqnarray*}
 \eta\big( {\bf 1}_{C(z , e^{np}z)} \big) &=&
 C_+\int_D\int_{G(\mu_A)} {\bf 1}_{C(z , e^{np}z)} (aw) l(da) \Sigma_{\eta}(dw)\\
 &=&C_+ \int_D l(C(wz,wze^{np}) \Sigma_{\eta}(dw)= C_+n \int_D\Sigma_{\eta}(dw)\\
  &=& nC_+
 \end{eqnarray*} and this value also does not depend on $\eta$.
\end{proof}

\section{Positivity of the limiting constant}
\label{sect-positivity}

In this section we are going to discuss non degeneracy of the
limit measure \eqref{rlimit} and finish the proofs of theorems
\ref{mthm1} and \ref{mthm2} proving that the constant $C_+$ is positive.
 A partial result was obtained in
\cite{Bu} in the one dimensional case  and $B\ge \eps$ a.s in
\cite{Bu}. Now we are going to prove
\begin{thm}
\label{positive}
If hypothesis {\bf (H)} is satisfied,
then  for all $\a, \b>0$
 \begin{equation}\label{eq-positive}
\limsup_{x\to\8} \nu\big\{u\in\R^d:\;  z\alpha < |u|\le  z\beta  \big\} >0
.\end{equation}
\end{thm}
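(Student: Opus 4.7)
The plan is to combine the weak compactness from Theorem \ref{thm-upp-bound} with a lower bound on the normalization $L(z)$ derived from the renewal representation \eqref{measure} of $\nu$.

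By Theorem \ref{thm-upp-bound}, there exist a slowly varying function $L(z) = \delta_{(0,z^{-1})}*_G \nu(R)$ (for the compactly supported reference function $R$ constructed there) and a subsequence $z_n\to\infty$ along which $\delta_{(0,z_n^{-1})}*_G\nu/L(z_n)$ converges weakly on $\R^d\setminus\{0\}$ to a non-null, $G(\mu_A)$-invariant Radon measure $\eta$. Using that $\eta$ is non-null and $G(\mu_A)$-invariant, I would first show there exists $s\in\R$ with $\eta(\{u:\alpha e^s<|u|<\beta e^s\})>0$. In the aperiodic case, $\Rp$-invariance forces $\eta=C\,\Sigma\otimes\frac{da}{a}$ with $C>0$ (as in the proof of Theorem \ref{mthm1}), so $s=0$ works. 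In the periodic case $G(\mu_A)=\langle e^p\rangle$, Fubini over $s\in[0,p)$ yields
\begin{equation*}
\int_0^p \eta(\alpha e^s<|u|<\beta e^s)\,ds = \int \bigl|[\log(|u|/\beta),\log(|u|/\alpha))\cap[0,p)\bigr|\,\eta(du) > 0,
\end{equation*}
since the integrand is strictly positive on the annulus $\alpha<|u|<\beta e^p$, whose log-width $\log(\beta/\alpha)+p$ exceeds one period and thus has positive $\eta$-mass by invariance and non-nullity. Choosing $s$ (perturbed slightly if necessary to avoid any atomic sphere of $\eta$) and setting $z'_n:=e^s z_n$, the slowly varying property $L(e^s z)/L(z)\to 1$ together with weak convergence yields
\begin{equation*}
\nu(\alpha z'_n<|u|\le\beta z'_n)\ge c\,L(z'_n)\quad\text{for large }n,\ \text{with }c>0.
\end{equation*}

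The crux is then to prove $L(z'_n)\not\to 0$. For this I would exploit the renewal decomposition $\nu(f)=\int \E\bigl[\sum_{n=0}^{\tau-1}f(X_n^x)\bigr]\nu_L(dx)$ from \eqref{measure}, where $\tau$ is the first descending ladder time of the centered random walk $S_n=\log(A_1\cdots A_n)$, which under hypothesis \textbf{(H)} has finite positive variance and is thus null-recurrent on $\R$. Applied to $f(u)=R(u/z)$ and writing $X_n^x=e^{S_n}x+Y_n$, one obtains
\begin{equation*}
L(z)=\int \E\Bigl[\sum_{n=0}^{\tau-1} R\bigl((e^{S_n}x+Y_n)/z\bigr)\Bigr]\nu_L(dx),
\end{equation*}
and the task reduces to bounding from below the expected occupation time of $S_n$ in a bounded window around $\log z-\log|x|$ before $\tau$. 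Classical fluctuation theory for centered finite-variance random walks supplies such a lower bound; integrating against the $\nu_L$-distribution of $\log|x|$ at scales comparable to $\log z$ yields $L(z)\ge c'>0$ along a suitable subsequence of $z$, which can be arranged to contain $\{z'_n\}$ by further subsampling.

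Combining the two bounds gives $\limsup_n\nu(\alpha z'_n<|u|\le\beta z'_n)\ge cc'>0$, proving the theorem. The main obstacle is the renewal estimate: the expected occupation of $S_n$ at a high level $v$ before $\tau$ decays like $1/v$, so one must exploit the $\nu_L$-mass at scales matching $\log z$ and carefully control the additive perturbation $Y_n$ so that $|X_n^x|/z\in\mathrm{supp}(R)$ for a significant proportion of the relevant $n<\tau$. This requires a delicate interplay of fluctuation theory for $S_n$ with the explicit structure of $\nu_L$ and of the additive process $Y_n$.
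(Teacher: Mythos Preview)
Your plan contains a real gap, and also a misconception that obscures where the true difficulty lies.

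First, the misconception: by the duality lemma (Lemma~\ref{lem-duality}), the expected number of visits of $S_n$ to a window $[v,v+1]$ before the first descending ladder time $\tau=L$ equals the renewal measure of the \emph{ascending} ladder heights at $[v,v+1]$, which by Blackwell's theorem is \emph{bounded away from zero} as $v\to\infty$, not decaying like $1/v$. So there is no $1/v$ obstacle, and your proposed remedy---``exploit the $\nu_L$-mass at scales matching $\log z$''---is both unnecessary and unworkable (since $\nu_L$ is a fixed probability measure with no mass at scale $\log z\to\infty$).

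The genuine obstacle is exactly the one you name but do not resolve: the additive perturbation $Y_n$. Knowing that $S_n$ sits in the right logarithmic window says nothing about whether $|X_n^x|=|e^{S_n}x+Y_n|$ does. Your sentence ``carefully control the additive perturbation $Y_n$'' is the entire content of the problem, and the proposal gives no mechanism for it. In particular, there is no a priori bound on $|Y_n|$ relative to $e^{S_n}|x|$ on the event $\{n<\tau\}$, so for a two-sided test function like $R$ you cannot simply drop $Y_n$.

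The paper avoids this by never bounding an annulus directly. Instead it proves, for every \emph{nonincreasing} $f\ge 0$,
\[
\int_{\R^d} f(|u|)\,\nu(du)\ \ge\ C\int_M^\infty f(a)\,\frac{da}{a}.
\]
Monotonicity means only an \emph{upper} bound on $|X_n^x|$ is needed, namely $|X_n^x|\le e^{S_n}\bigl(R+\sum_{i\le n}|B_i|\bigr)$ for $|x|\le R$. A time-reversible version of the duality lemma then converts the sum over $n<L$ into a sum over ascending ladder epochs; the key step is to dominate $S_{T_n}+\log\bigl(R+\sum_{i\le T_n}|B_i|\bigr)$ by a genuine random walk $\sum_{j=1}^n V_j$ with i.i.d.\ integrable increments (the integrability of $V_j$ uses the $(2+\varepsilon)$-moment in \textbf{(H)} via a Borel--Cantelli argument). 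The renewal theorem for $\sum V_j$ yields the display above. Finally, a short telescoping argument closes the proof: if the annulus masses $\nu\{\alpha(\beta/\alpha)^k<|u|\le\beta(\beta/\alpha)^k\}$ were eventually $<\varepsilon$, summing would give $\nu\{|u|\le\beta^{n+1}/\alpha^n\}=o(n)$, contradicting the lower bound with $f=\mathbf 1_{[0,\beta^{n+1}/\alpha^n]}$.

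In short: your weak-compactness reduction is unnecessary overhead, and the substantive renewal estimate you need is not the one you state. The paper's route---monotone test functions plus telescoping---is precisely engineered to absorb the $B$-perturbation without ever needing a two-sided control.
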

First we will prove a version of the duality lemma  generalized to
time-reversible functions. Although the technic of proof is
classical (see for instance \cite{F}), we present here complete
argument for reader's convenience. Let $W_i=(Y_i,Z_i)$ be a
sequence of i.i.d. random variables on $\R\times \R$ and let
$$S_n=\sum_{i=1}^nY_i \mbox{ if } n\geq 1 \quad\mbox{ and }
S_0=0$$ (later we will take $W_i=(\log A_i,B_i)$). We define a
sequence of stopping times:
\begin{eqnarray*}
T_0 &=& 0;\\
T_i &=& \inf\{ n>T_{i-1}:\; S_n > S_{T_{i-1}} \}
\end{eqnarray*}
and we put
$$ L= \inf \{n:\; S_n < 0\}. $$
If the events are void then the stopping times are equal to
$\infty$.
\begin{lem}[Duality Lemma]\label{lem-duality}\
Consider a sequence of non negative functions
$$\a_n: (\R\times \R)^n \rightarrow \R,$$
for $n\ge 1$, $\a_0$ equal to some constant and $\a_\8=0$, that are time reversible, that is
 $$ \a_n(w_1, \ldots, w_n)=\a_n(w_n, \ldots, w_1) \quad \forall (w_1, \ldots, w_n)\in (\R\times \R)^n.$$
 Then
$$
\E\bigg[  \sum_{i=0}^{L-1} \a_i(W_1,\ldots W_i) \bigg] = \E\bigg[ \sum_{i=0}^\8\a_{T_i}(W_1,\ldots W_{T_i}) \bigg].
$$
\end{lem}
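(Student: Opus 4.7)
The plan is to apply the classical Feller time-reversal argument. The first step is to expand both sides into sums over fixed $n \ge 0$ via Tonelli, which is legitimate since all summands are non-negative and $\alpha_\infty = 0$:
\begin{align*}
\E\Bigl[\sum_{i=0}^{L-1}\alpha_i(W_1,\ldots,W_i)\Bigr] &= \sum_{n=0}^{\infty}\E\bigl[\alpha_n(W_1,\ldots,W_n)\mathbf{1}_{\{L>n\}}\bigr],\\
\E\Bigl[\sum_{i=0}^{\infty}\alpha_{T_i}(W_1,\ldots,W_{T_i})\Bigr] &= \sum_{n=0}^{\infty}\E\bigl[\alpha_n(W_1,\ldots,W_n)\mathbf{1}_{\{n\in\mathcal{T}\}}\bigr],
\end{align*}
where $\mathcal{T}=\{T_0,T_1,\ldots\}$ denotes the (random) set of strict ascending ladder epochs. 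It therefore suffices to verify the termwise identity
\[
\E\bigl[\alpha_n(W_1,\ldots,W_n)\mathbf{1}_{\{L>n\}}\bigr] = \E\bigl[\alpha_n(W_1,\ldots,W_n)\mathbf{1}_{\{n\in\mathcal{T}\}}\bigr]
\]
for each $n \ge 0$; the case $n=0$ is immediate as both indicators are $1$ and both sides equal the constant $\alpha_0$.

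For $n\ge 1$, I would invoke the deterministic involution $\sigma:(w_1,\ldots,w_n)\mapsto(w_n,\ldots,w_1)$ on $(\R\times\R)^n$. Three observations combine: (i) because the $W_i$ are i.i.d., $\sigma$ preserves the joint law of $(W_1,\ldots,W_n)$; (ii) by the hypothesized time-reversibility, $\alpha_n(W_1,\ldots,W_n)$ is $\sigma$-invariant; (iii) the reversed partial sums satisfy $\widetilde S_k = Y_n+\cdots+Y_{n-k+1} = S_n-S_{n-k}$. Consequently the event $\{L>n\}=\{S_1\ge 0,\ldots,S_n\ge 0\}$ pulls back under $\sigma$ to $\{S_n-S_{n-k}\ge 0,\ k=1,\ldots,n\}=\{S_n\ge \max_{0\le j\le n-1}S_j\}$, namely the event that $n$ is an ascending ladder epoch, and the termwise identity follows.

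The main obstacle I anticipate is the strict-versus-weak ladder distinction: the time reversal naturally produces the \emph{weak} ascending event $\{S_n\ge S_j,\ j<n\}$, whereas $\{n\in\mathcal{T}\}$ is the \emph{strict} event $\{S_n>S_j,\ j<n\}$. Under the standing hypothesis {\bf (H)} of the paper, the relevant ties contribute negligibly; more cleanly, one uses matched conventions — strict descending on the left against weak ascending on the right, or weak descending against strict ascending — at the level of the termwise identity, and then checks that switching between the two ascending (resp.\ descending) conventions cancels out because tie-configurations of the reversed walk hit $0$ at some intermediate step and so are accounted symmetrically on both sides. Summing the identity over $n\ge 0$ then delivers the duality lemma.
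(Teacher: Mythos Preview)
Your approach is exactly the paper's: expand both sides by Tonelli, then prove the termwise identity for each fixed $n$ via the time-reversal $(W_1,\ldots,W_n)\mapsto(W_n,\ldots,W_1)$, using that the $W_i$ are i.i.d.\ and $\alpha_n$ is symmetric. The paper's computation is the same line-for-line.

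Your worry about strict versus weak ladder conventions is legitimate, and in fact the paper is a bit casual here: it writes the event $\{i<L\}$ as $\{S_j>0,\ j=1,\ldots,i\}$ even though the stated definition $L=\inf\{n:S_n<0\}$ gives $\{S_j\ge 0\}$. With the strict inequality the reversal yields exactly $\{S_i>S_j,\ j<i\}=\{i\in\mathcal T\}$, matching the strict ascending $T_i$. So the clean fix is simply to match conventions (strict with strict, or weak with weak), not the ``symmetric cancellation'' you sketch at the end --- that last argument is not correct as stated, since the discrepancy between strict and weak ladder counts does not in general cancel termwise. In the paper's applications the distinction is harmless anyway, but the honest resolution is the convention-matching one you mention first.
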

\begin{proof}
We have
\begin{eqnarray*}
\E\bigg[  \sum_{i=0}^{L-1} \a_i(W_1,\ldots W_i) \bigg] &=&
\sum_{i=0}^\8\E\Big[{\bf 1}_{[i<L]} \a_i(W_1,\ldots W_i) \Big]\\
&=&\a_0+\sum_{i=1}^\8\E\Big[ {\bf 1}_{[S_j>0\ \forall j=1,\ldots,i]} \a_i(W_1,\ldots W_i)\Big]
\end{eqnarray*}

For fixed $i$ consider the reversed time sequence $$ \overline{W}_k=W_{i-k+1}$$ and observe that the vector $(\overline{W}_1,\ldots, \overline{W}_i)=(W_i, \ldots, W_1)$ has the same law as $(W_1,\cdots,W_i)$. Thus
\begin{eqnarray*}
  && \E\Big[ {\bf 1}_{[S_j>0\ \forall j=1,\ldots,i]} \a_i(W_1,\ldots W_i)\Big]=
  \E\Big[ {\bf 1}_{[\sum_{k=1}^jY_k>0\ \forall j=1,\ldots,i]} \a_i(W_1,\ldots W_i)\Big]\\
  &&=\E\Big[ {\bf 1}_{[\sum_{k=1}^j\overline{Y}_k>0\ \forall j=1,\ldots,i]} \a_i(\overline{W}_1,\ldots \overline{W}_i)\Big]=
  \E\Big[ {\bf 1}_{[\sum_{k=1}^jY_{i-k+1}>0\ \forall j=1,\ldots,i]} \a_i(W_i,\ldots W_1)\Big]\\
 &&=\E\Big[ {\bf 1}_{[\sum_{k=i-j+1}^{i}Y_k>0\ \forall j=1,\ldots,i]} \a_i(W_1,\ldots W_i)\Big]=
  \E\Big[ {\bf 1}_{[S_i-S_{i-j}>0\ \forall j=1,\ldots,i]} \a_i(W_1,\ldots W_i)\Big]\\
  &&=\E\Big[ {\bf 1}_{[S_i>S_j\ \forall j=0,\ldots,i-1]} \a_i(W_1,\ldots W_i)\Big]=\E\Big[ {\bf 1}_{[\exists k\geq 1 : i=T_k]} \a_i(W_1,\ldots W_i)\Big]
\end{eqnarray*}
Then
$$\E\bigg[  \sum_{i=0}^{L-1} \a_i(W_1,\ldots W_i) \bigg]=\a_0+\sum_{i=1}^\8\E\Big[ {\bf 1}_{[\exists k\geq 1 : i=T_k]}
\a_i(W_1,\ldots W_i)\Big]= E\bigg[ \sum_{k=0}^\8\a_{T_k}(W_1,\ldots W_{T_k}) \bigg].$$

\end{proof}

\begin{proof}[Proof of Theorem \ref{positive}]
{\bf Step 1.} First we claim that there exist two positive
constants $C$ and $M$ such that  for every positive nonincreasing
$f$ on $\R_+$
\begin{equation}\label{nu-leb}
\int_{\R^d}f(|u|)\nu(du)\geq C \int_M^\8 f(a)\frac{da}{a}.
\end{equation}

Take $Y_i=\log A_i$ and $S_n=\sum_{i=1}^nY_i$. As it was proved in
\cite{BBE} $$\nu(f) =  \int_{\R^d}\E\Big[ \sum_{n=0}^{L-1}f(X_n^x)
\Big]\nu_L(dx),$$ were $\nu_L$ is the invariant probability
measure of the process $X_{L_n}$. Take a ball $B$ of $\R^d$ of
radius $R$ such that $\nu_L(B)=C_R >0$.
    We have
\begin{eqnarray*}
\int_{\R^d}f(|u|)\nu(du) &\ge & \int_B\E\bigg[
\sum_{n=0}^{L-1} f\big( \big|A_1A_2\ldots A_n x+ A_2A_3\ldots A_n B_1+\cdots + B_n \big| \big)\bigg] \nu_L(dx)\\
&\ge& C_R  \E\bigg[
\sum_{n=0}^{L-1} f\Big( A_1A_2\ldots A_n\big(R+ |B_1|+\cdots+ |B_n| \big)\Big)\bigg] \\
&=&
C_R  \E\bigg[
\sum_{n=0}^{L-1} f\Big( \e^{S_{n}+\log(R+\sum_{i=1}^{n}|B_i|)}\Big)\bigg] \\
&=&  C_R  \E\bigg[
\sum_{n=0}^{\8} f\Big( \e^{S_{T_n}+\log(R+\sum_{i=1}^{T_n}|B_i|)}\Big)\bigg]
\end{eqnarray*}
In the last line we applied the duality Lemma since the function:
$$\alpha_n((Y_1,B_1),\ldots,(Y_n,B_n))=f\Big( \e^{\sum_{i=1}^{n}Y_i+\log(R+\sum_{i=1}^{n}|B_i|)}\Big)$$
is time reversible.
Consider the sequences of i.i.d. variables
$$W_j=  \max\{\log(1+R+|B_i|): i= T_{j-1}+1,\ldots, T_{j}\}$$ and
 $$V_j=S_{T_j}- S_{T_{j-1}}+\log (T_{j}-T_{j-1}) + W_j.$$

 Observe that for $n\geq 1$
\begin{eqnarray*}
S_{T_n}+\log(R+\sum_{i=1}^{T_n}|B_i|)&\leq & S_{T_n}+\log\Big(\sum_{j=1}^n (R+\sum_{i=T_{j-1}+1}^{T_{j}}|B_i|)\Big)\\
&\leq& \sum_{j=1}^n \bigg((S_{T_j}- S_{T_{j-1}})+\log\Big(1+R+\sum_{T_{j-1}+1\le i\le  T_{j}}|B_i|\Big)\bigg) \leq \sum_{j=1}^nV_j
\end{eqnarray*}
We claim that the variables $V_j$ are integrable. In fact since
$Y_i=\log A_i$ has a moment of order $2+\eps$, then classical
results guarantee that  $S_{T_j}- S_{T_{j-1}}$ is integrable and
$T_{j}-T_{j-1}$ has a moment of order $1/(2+\eps)$. So we need
only to prove that the variable $W_j$ has the first moment (see
\cite{CKW}, page 1279). By the Borel-Cantelli Lemma it sufficient
to show that
$$\limsup_{n\to\8}\frac 1n W_n <M \quad \mbox{a.s.}$$
for some constant $M$. We have
$$\frac 1n W_n=\frac{\sum_{j=1}^n(T_{j}-T_{j-1})^{1/(2+\eps)}}{n}\cdot \frac{W_n}{\sum_{j=1}^n(T_{j}-T_{j-1})^{1/(2+\eps)}}$$
By the strong law of large numbers the first term converges. For
the second term we have
$$\left(\frac{W_n}{\sum_{j=1}^n(T_{j}-T_{j-1})^{1/(2+\eps)}}\right)^{2+\eps}\leq \frac{W_n^{2+\eps}}{T_n}
\leq\frac{\sum_{k=1}^{T_n}\log(1+R+|B_k|)^{2+\eps}}{T_n}$$ which
converges since $(\log^+|B_1|)^{2+\eps}$ is integrable.

Let $U(y,x) = \sum_{n=1}^\8\E\Big[{\bf 1}_{(y,x]}\big( \sum_{i=1}^n V_i \big) \Big]$. Since $0<\E V_1<\8$, by renewal theorem
$$\lim_{x\to \8}\frac{U(0,x)}x = \frac 1{\E V_1}>0.$$
hence for any $m>1$ there exist large $N$ such that $\inf_{k\geq
N} \frac{U(m^k,m^{k+1})}{m^k}=C_1>0$. Therefore,

\begin{eqnarray*}
\int_{\R^d}f(|x|)\nu(dx) &\ge & C_R \E\bigg[
\sum_{n=0}^{\8} f\Big( \e^{\sum_{i=1}^nV_i}\Big)\bigg] \ge C_R \sum_{k>N}U(m^k,m^{k+1}) f\big(\e^{m^{k+1}}\big)\\
 &\ge& C_R  C_1 \sum_{k>N} m^k  f\big( \e^{m^{k+1}}\big) \ge
\frac{C_R C_1 }{m^2} \sum_{k>N} \int_{m^{k+1}}^{m^{k+2}} f(\e^x)dx \ge  C \int_{m^{N+1}}^\8 f(\e^x)dx,
\end{eqnarray*}
that proves \eqref{nu-leb}.

\noindent
{\bf Step 2.} Suppose that $\limsup_{z\to\8} \nu\big\{u:\;  z\alpha < |u|\le  z\beta  \big\}=0$, that is
for any fixed small $\eps>0$ there exists $N$ such that
$$\nu\Big\{u:\;  \frac{\b^k}{\a^k}\a < |u|\le  \frac{\b^k}{\a^k}\beta  \Big\}<\eps$$
for every $k>N$. Consider now the functions $f_n={\bf
1}_{\big[0,\frac{\b^{n+1}}{\a^n}\big]}$ on $\R_+$. Observe that
for $n>N$
\begin{eqnarray*}
\int_{\R^d}f_n(|u|)\nu(du) &=& \int_{\R^d}f_N(|u|)\nu(du) +\sum_{k=N+1}^{n}\int_{\R^d}{\bf 1}_{\big(\frac{\b^k}{\a^k}\a,\frac{\b^k}{\a^k}\beta\big]}(|u|)\nu(du)\\
&\leq&  \int_{\R^d}f_N(|u|)\nu(du)+ \eps(n-N).
\end{eqnarray*}
Thus $\limsup_{n\to\8}\frac{1}{n} \int_{\R^d}f_n(|u|)\nu(du)<\eps$, that is $\limsup_{n\to\8}\frac{1}{n} \int_{\R^d}f_n(|u|)\nu(du)=0$ since $\eps$ is arbitrary small.

On the other hand
$$\limsup _{n\to\8}\frac{1}{n}\int_M^\8 f_n(a)\frac{da}{a}= \limsup_{n\to\8}\frac{1}{n}\Big(\log(\frac{\b^{n+1}}{\a^n})-\log M\Big)=\log({\b}/{\a})>0$$
which leads to a contradiction with (\ref{nu-leb}).
\end{proof}

\appendix
\section{Proof of Theorem \ref{ps} and related corollaries}
\label{port-stone}

To prove Theorem \ref{ps} we will present a few consecutive
lemmas.

\begin{lem}
\label{lemma-ext} If $\psi\in\mathcal{F}(\ov\mu)$
the limit
$$ A \psi(x) = \lim_{\l\nearrow 1}A^\l\psi(x)$$
exists and is a continuous function. There exists a constant $C$ such that for all $x\in\R$ and $\l\in(1/2,1]$
$$\left|A^\l\psi(x)\right|\le C(1+x^2)$$
Furthermore for any fixed $x$
$$
\lim_{y\to\pm\8} \big(A\psi(x-y) - A\psi(-y)\big) = \mp x J(\psi)\s^{-2},
$$
and if $J(\psi)=0$ then
$$\lim_{x\to\pm\8} A\psi(x) = \mp  K(\psi)\s^{-2}.$$
\end{lem}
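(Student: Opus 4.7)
The approach is Fourier-analytic. For $\lambda<1$ the series defining $G^\lambda *\psi$ converges absolutely, and Fourier inversion together with $\widehat{\ov\mu^{*n}} = \wh{\ov\mu}^{\,n}$ gives
\[
A^\lambda\psi(x) = \frac{1}{2\pi}\int_\R \frac{J(\psi)\wh g(\theta) - e^{-ix\theta}\wh\psi(\theta)}{1 - \lambda\wh{\ov\mu}(\theta)}\,d\theta,
\]
where I have combined the formulas for $c_\lambda = G^\lambda *g(0)$ and $G^\lambda *\psi(x)$. The whole lemma will be read off from this representation by passing to the limit $\lambda\nearrow 1$, the delicacy being the behavior at $\theta = 0$ where the denominator vanishes to order $\theta^2$.

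My first step is to construct a $\lambda$-uniform integrable majorant for this integrand. Outside a small fixed interval $[-a,a]$ the denominator is bounded below uniformly in $\lambda\in(1/2,1]$, and the integrand is dominated by $C(|\wh g(\theta)| + |\wh\psi(\theta)|)/|1 - \wh{\ov\mu}(\theta)|$, which is integrable by the second defining condition of $\mathcal{F}(\ov\mu)$. Near $\theta = 0$, the condition $J(g) = 1$ forces the numerator to vanish: Taylor expanding gives
\[
J(\psi)\wh g(\theta) - e^{-ix\theta}\wh\psi(\theta) = i\theta\bigl(J(\psi)K(g) - K(\psi) + xJ(\psi)\bigr) + O(\theta^2),
\]
while centeredness and finite variance of $\ov\mu$ give $1 - \lambda\wh{\ov\mu}(\theta) = (1-\lambda) + \lambda\sigma^2\theta^2/2 + o(\theta^2)$. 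The ratio has an apparent $1/\theta$-type singularity which would ruin absolute integrability, but because $\psi, g, \ov\mu$ are real the Hermitian symmetry $\wh\psi(-\theta) = \overline{\wh\psi(\theta)}$ etc.\ makes the imaginary part of the integrand an odd function of $\theta$ and thus irrelevant for the real integral $A^\lambda\psi(x)$; the real part, once one decomposes using even/odd parts of $\mathrm{Re}$ and $\mathrm{Im}$ of $\wh g, \wh\psi, \wh{\ov\mu}$, has a finite limit at $\theta = 0$ of size $O(1+x^2)$, uniformly in $\lambda\in(1/2,1]$. Dominated convergence then gives simultaneously existence of $A\psi(x) = \lim_{\lambda\nearrow 1}A^\lambda\psi(x)$, its continuity in $x$, and the bound $|A^\lambda\psi(x)| \le C(1+x^2)$.

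For the asymptotics, differencing cancels the $\wh g$ contribution:
\[
A\psi(x-y) - A\psi(-y) = \frac{1}{2\pi}\int_\R \frac{e^{iy\theta}(1 - e^{-ix\theta})\wh\psi(\theta)}{1-\wh{\ov\mu}(\theta)}\,d\theta.
\]
Split the integral at $|\theta|=\eps$. Over $|\theta|\ge\eps$ the integrand, absent $e^{iy\theta}$, lies in $L^1(\R)$ by the second condition of $\mathcal{F}(\ov\mu)$, so Riemann-Lebesgue kills this part as $y\to\pm\infty$. Over $|\theta|<\eps$, Taylor-expand the integrand: the leading behavior is $2ixJ(\psi)e^{iy\theta}/(\sigma^2\theta)$ plus a bounded remainder (the remainder's contribution again vanishes by Riemann-Lebesgue). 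The real part of the leading term integrates to $-\frac{2xJ(\psi)}{\sigma^2}\int_{-\eps}^{\eps}\sin(y\theta)/\theta\,d\theta$, whose limit as $y\to\pm\infty$ is $\mp 2\pi xJ(\psi)/\sigma^2$, giving the claimed $\mp xJ(\psi)/\sigma^2$ after the overall $1/(2\pi)$. The case $J(\psi)=0$ is cleaner because the $\wh g$ term in the representation of $A\psi(x)$ now vanishes: applying the same split-and-$\sin/\theta$ argument directly to $A\psi(x)$, with leading singular behavior $2iK(\psi)/(\sigma^2\theta)$, produces $\lim_{x\to\pm\infty}A\psi(x) = \mp K(\psi)/\sigma^2$.

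The main obstacle is the first step: producing the $\lambda$-uniform integrable majorant near $\theta = 0$. The naive absolute-value bound loses a factor of $\theta$, so one has to work with the real part of the complex integrand and carefully cancel odd singular contributions using only the defining conditions of $\mathcal{F}(\ov\mu)$ together with centeredness and finite variance of $\ov\mu$ (no higher moment can be invoked, which constrains what can be said about $\mathrm{Im}\wh{\ov\mu}$ near the origin).
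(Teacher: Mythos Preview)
Your Fourier representation and overall strategy match the paper's, but there is a real gap precisely at the point you flag in your last paragraph.

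You assert that, after cancelling the odd imaginary part by Hermitian symmetry, the real part of the integrand has a finite limit at $\theta=0$. Write $1-\wh{\ov\mu}(\theta)=a(\theta)+ib(\theta)$ with $a$ even and $b$ odd. Centeredness and finite variance give $a(\theta)\sim\sigma^2\theta^2/2$ and $b(\theta)=o(\theta^2)$, but \emph{no more} for $b$ without a third moment. The singular piece of your integrand is $iC_x\theta/(a+ib)$ with $C_x=J(\psi)K(g)-K(\psi)+xJ(\psi)$, and its real part equals
\[
\frac{C_x\,\theta\,b(\theta)}{a(\theta)^2+b(\theta)^2}=C_x\cdot o(\theta^{-1}),
\]
which need not be bounded or even locally integrable (think $b(\theta)\sim\theta^2/\log(1/|\theta|)$). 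So neither dominated convergence, nor the uniform bound $|A^\lambda\psi(x)|\le C(1+x^2)$, follows from the symmetry reduction. The same objection hits your asymptotic step: subtracting the model singularity $2iC_x e^{iy\theta}/(\sigma^2\theta)$ from the true term $iC_x\theta e^{iy\theta}/(1-\wh{\ov\mu}(\theta))$ leaves a remainder of size $o(\theta^{-1})$, not $O(1)$, so Riemann--Lebesgue does not apply to it.

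The paper does not attempt to dominate this piece. It isolates the offending integral
\[
C_{\ov\mu}^\lambda(y)=\frac{i}{2\pi}\int_{|\theta|<a}\frac{e^{-iy\theta}\theta}{1-\lambda\wh{\ov\mu}(\theta)}\,d\theta
\]
and invokes Theorem~3.1'' of Port--Stone \cite{PS1}, which asserts that $\lim_{\lambda\nearrow1}C_{\ov\mu}^\lambda(y)=C_{\ov\mu}^1(y)$ exists and $\lim_{y\to\pm\infty}C_{\ov\mu}^1(y)=\pm\sigma^{-2}$. This is a genuine external input whose proof uses the $\lambda$-regularization and does \emph{not} proceed by producing an $L^1$ majorant at $\lambda=1$. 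Once $C_{\ov\mu}^\lambda$ is split off, the remaining integrand $h^\lambda_{\psi,\phi,x}(\theta)$ is honestly bounded by $C(1+x^2)H(\theta)$ with $H\in L^1$ uniformly for $\lambda\in[1/2,1]$, and then dominated convergence and Riemann--Lebesgue finish exactly as you outline. Your $\int\sin(y\theta)/\theta\,d\theta$ heuristic is morally how the Port--Stone limit is identified, but it must be carried out on the regularized integrals first.
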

\begin{proof}
Observe that the Fourier transform of the measure $G^\l$ is
$$\wh{G^\l}(\th)=\sum_{n=0}^\infty\l^n\wh{\ov\mu}(\th)^n=\frac 1 {1-\l\wh{\ov\mu}(\th)}$$
Since $G^\l$ is a finite measure and $\wh{\psi}\in L^1(\R)$, using the Fubini Theorem, one has
\begin{eqnarray*}
G^\l*\psi(x)&=&\int_\R\psi(x+y)G^\l(dy)
=\frac 1{2\pi}\int_\R\int_\R \e^{-i\th(x+y)}\wh{\psi}(\th)d\th\, G^\l(dy)\\
&=&\frac 1{2\pi}\int_\R \e^{i\th x}\wh{\psi}(-\th)\int_\R \e^{i\th y}G^\l(dy)\,d\th
=\frac 1{2\pi}\int_\R \e^{i\th x}\wh{\psi}(-\th)\wh{G^\l}(\th)d\th\\
&=&\frac 1{2\pi}\int_\R \e^{i\th x}\frac{\wh{\psi}(-\th)}{1-\l\wh{\ov\mu}(\th)}d\th
\end{eqnarray*}

Notice that :
$$\frac{\wh{\psi}(-\th)}{1-\l\wh{\ov\mu}(\th)}=\frac{J(\psi)-iK(\psi)\th}{1-\l\wh{\ov\mu}(\th)}{\bf 1}_{[-a,a]}(\th)+\psi^\l_0(\th)$$
where $$\psi^\l_0(\th)= \frac{O(\th^2)}{1-\l\wh{\ov\mu}(\th)}{\bf
1}_{[-a,a]}(\th)+\frac{\wh{\psi}(-\th)}{1-\l\wh{\ov\mu}(\th)}{\bf
1}_{[-a,a]^c}(\th).$$ Moreover,
\begin{equation}
\label{j1}
|1-\l \wh{\ov\mu}(\th)|\ge \l|1-\wh{\ov\mu}(\th)|\ \mbox{ and }\   |1-\l \wh{\ov\mu}(\th)| \ge\l c|\th|^2
\end{equation}
for every $\th\in [-a,a]$. Therefore, for $1/2\le \l\leq 1$
\begin{equation}\label{eq-bound-psi}
\left|\psi^\l_0(\th)\right|<  C\left({\bf 1}_{[-a,a]}(\th)+\left|\frac{\wh{\psi}(-\th)}{1-\wh{\ov\mu}(\th)}\right|{\bf 1}_{[-a,a]^c}(\th)\right)\in L^1(d\th)
\end{equation}

Take $\phi\in\mathcal{F}(\ov\mu)$ such that $J(\phi)=J(\psi)$ then
\begin{equation*}
\begin{split}
%\label{d1}
G^\l*\phi(-y)-G^\l*\psi(x-y) &= \frac 1{2\pi}\int_\R \frac{ \e^{-iy\th} \wh \phi(-\th) -   \e^{i(x-y)\th}\wh\psi(-\th)}{1-\l\wh{\ov\mu}(\th)}d\th\\
&= \frac 1{2\pi}\int_{|\th|<a}\frac{
\e^{-iy\th}\big((J(\phi)-i\th K(\phi)) -\e^{ix\th}(J(\psi)-i\th K(\psi))\big)
}{1-\l\wh{\ov\mu}(\th)}d\th \\
&\quad+\frac 1{2\pi}\int_{|\th|\ge a}
\e^{-iy\th}(\phi^\l_0(\th)-\e^{ix\th}\psi^\l_0(\th))d\th .
\end{split}
\end{equation*}

The first integral can be decomposed as
\begin{align*}
&\frac 1{2\pi}\int_{|\th|<a}\frac{\e^{-iy\th}
\left((J(\psi)-i\th K(\phi)) -\e^{ix\th}(J(\psi)-i\th K(\psi))\right)
}{1-\l\wh{\ov\mu}(\th)}d\th  \\
&=\frac 1{2\pi}\int_{|\th|<a}\frac{\e^{-iy\th} \left((J(\psi)-i\th K(\phi))
 -(1+ix\th + (\e^{ix\th}-1-ix\th))(J(\psi)-i\th K(\psi))\right)} {1-\l\wh{\ov\mu}(\th)}d\th\\
&= -\frac{i}{2\pi} (K(\phi)- K(\psi)+xJ(\psi) ) \int_{|\th|<a}
\frac{\e^{-iy\th}\th}{1-\l\wh{\ov\mu}(\th)}d\th \\ &
\quad+  \int_{|\th|<a}
\frac{\e^{-iy\th}\left((J(\psi)-i\th K(\psi))(1+ix\th-\e^{ix\th})+x\th^2K(\psi)\right)}{1-\l\wh{\ov\mu}(\th)}d\th.
\end{align*}
Let
$$C^\l_{\ov\mu}(\th)(y) =\frac{i}{2\pi}\int_{|\th|<a}
\frac{\e^{-iy\th}\th}{1-\l\wh{\ov\mu}(\th)}d\th .$$ By Theorem
3.1'' in \cite{PS1} as $\l$ goes to 1 the limit
$$ \lim_{\l\nearrow1}C^\l_{\ov\mu}(y)=C^1_{\ov\mu}(y)$$
exists, it is finite and $\lim_{y\to\pm\infty}C^1_{\ov\mu}(y)=\pm
\s^{-2}$. Summing up we may write
\begin{equation}\label{eq-dec-Gl}
G^\l*\phi(-y)-G^\l*\psi(x-y)=-\left(K(\phi)-
K(\psi)+xJ(\psi)\right)   C^\l_{\ov\mu}(y)+\int_\R
\e^{-iy\th}h^\l_{\psi,\phi,x}(\th)d\th ,
\end{equation}
where the functions
$$h^\l_{\psi,\phi,x}(\th)= \frac{1}{2\pi}\left({\bf 1}_{[|\th|<a]}
\frac{(J(\psi)-i\th K(\psi))(1+ix\th-\e^{ix\th})+x\th^2K(\psi)}{1-
\l \wh{\ov\mu}(\th)}
+\phi^\l_0(\th)-\e^{ix\th}\psi^\l_0(\th)\right)$$ are bounded by a
function in $L^1(d\th)$ uniformly for all $\l\in [1/2,1]$ and $x$
in compact sets. More precisely by \eqref{j1} and
\eqref{eq-bound-psi}, there is an integrable function
$H=H_{\phi,\psi}$ such that for all $x\in\R$ and $\l\in[1/2,1]$:
\begin{equation}\label{eq-bound-h}
|h^\l_{\psi,\phi,x}(\th)|\le C(1+x^2)H(\th).
\end{equation}

By  the Lebesgue's dominate convergence theorem, the following limit exists
\begin{align}\label{eq-lim-G}
\lim_{\l\nearrow 1}G^\l*\phi(-y)-G^\l*\psi(x-y)=&-\left(K(\phi)- K(\psi)+xJ(\psi)\right)   C^1_{\ov\mu}(y) +\int_\R \e^{-iy\th}h^1_{\psi,\phi,x}(\th)d\th
\end{align}

For $\phi=J(\psi)g$ and $y=0$, we have $A^\l\psi(x)=G^\l*\phi(0)-G^\l*\psi(x)$ thus
\begin{align*}
A^\l\psi(x)=&-\left(J(\phi)K(g)- K(\psi)+xJ(\psi)\right)
C^\l_{\ov\mu}(0) +\int_\R h^\l_{\psi,\phi,x}(\th)d\th
\end{align*}
and
$$A\psi(x)=\lim_{\l\nearrow 1}A^\l \psi(x)=-\left(J(\phi)K(g)- K(\psi)+xJ(\psi)\right)   C^1_{\ov\mu}(0) +\int_\R h^1_{\psi,\phi,x}(\th)d\th .$$
Hence we have proved the existence of the recurrent potential
kernel. The continuity of $A\psi$ follows from uniform
integrability. Furthermore by \eqref{eq-bound-h} and since
$C^1_{\ov\mu}(0)$ is finite, we also have
$$\left|A^\l\psi(x)\right|\le C'(1+x^2)$$

Take now $\phi=\psi$ then by \eqref{eq-dec-Gl} and \eqref{eq-lim-G}
\begin{align*}
A\psi(x-y)-A\psi(-y)&= \lim_{\l\nearrow 1}G^\l*\psi(-y)-G^\l*\psi(x-y)
\\&=-xJ(\psi) C^1_{\ov\mu}(y) +   \wh{h^1_{\psi,\psi,x}}(-y).
\end{align*}
Since $ h^1_{\psi,\psi,x}\in L^1(d\th)$, then $\wh{h^1_{\psi,\psi,x}}$ is in $C_0(\R)$ for any fixed $x$.
Thus
$$\lim_{y\to\pm\infty}A\psi(-y)-A\psi(x-y)
=-xJ(\psi)\lim_{y\to\pm\infty}C_{\ov\mu}(y)
=\mp xJ(\psi)\s^{-2}.$$

If $J(\psi)=0$ then $A^\l\psi=-G^\l*\psi$ and we can take $\phi=0$
and $x=0$. Thus by (\ref{eq-lim-G})
$$
A\psi(-y)=K(\psi)C_{\ov\mu}(y) + \wh{h}^1_{\psi,0,0}(-y)
$$
and passing with $y$ to $\pm\8$ we obtain the expected limit.
\end{proof}
\begin{lem}
\label{lem-bound-A }
Assume $J(\psi)\geq 0$. For all $1>\eps>0$ there exists a constant $M$ such that for  any $x\in \R$
$$ -M+(1-\eps)J(\psi)\s^{-2}|x|\leq A\psi(x)\leq M+ (1+\eps)J(\psi)\s^{-2}|x|.$$
\end{lem}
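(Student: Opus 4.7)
The plan is to exploit the identity
$$A\psi(x+t) - A\psi(t) = -xJ(\psi)\,C^1_{\ov\mu}(-t) + \wh{h^1_{\psi,\psi,x}}(t)$$
which is obtained inside the proof of Lemma \ref{lemma-ext} by setting $\phi=\psi$ in \eqref{eq-dec-Gl}, passing $\l\nearrow 1$, and substituting $t=-y$. For each fixed $x$, the first term on the right is a function of $t$ converging to $\pm xJ(\psi)\s^{-2}$ as $t\to\pm\infty$ (because $C^1_{\ov\mu}(y)\to\pm\s^{-2}$ as $y\to\pm\infty$), and the second lies in $C_0(\R)$ since $h^1_{\psi,\psi,x}\in L^1$. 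If $J(\psi)=0$, Lemma \ref{lemma-ext} already gives $\lim_{x\to\pm\infty}A\psi(x)=\mp K(\psi)\s^{-2}$, and continuity of $A\psi$ forces it to be uniformly bounded, so the claim is immediate with the $\eps$-term vanishing.

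Suppose now $J(\psi)>0$. Specializing the identity to $x=1$, for any prescribed $\eps\in(0,1)$ one can choose $T>0$ such that
$$(1-\eps)J(\psi)\s^{-2}\ \le\ A\psi(t+1)-A\psi(t)\ \le\ (1+\eps)J(\psi)\s^{-2}\qquad\text{for all } t>T,$$
with analogous bounds of opposite sign for $t<-T$. Telescoping then yields, for every integer $n\ge 0$,
$$n(1-\eps)J(\psi)\s^{-2}\ \le\ A\psi(T+n)-A\psi(T)\ \le\ n(1+\eps)J(\psi)\s^{-2},$$
which is the required linear envelope at integer-shifted points.

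To upgrade integer shifts to arbitrary $x>T$, I would write $x=T+n+s$ with $n=\lfloor x-T\rfloor$ and $s\in[0,1)$, and control the oscillation $A\psi(T+n+s)-A\psi(T+n)$ via the identity once more: by \eqref{eq-bound-h} the bound $\|h^1_{\psi,\psi,s}\|_{L^1}\le 2C\|H\|_{L^1}$ holds uniformly for $s\in[0,1]$, and $C^1_{\ov\mu}$ is bounded (being continuous with finite limits at $\pm\infty$), so the oscillation on unit intervals is bounded by a constant $C_1$ independent of $n$. Combining this with the integer-point bounds gives the claimed inequality on $x\ge T$; continuity of $A\psi$ on $[0,T]$ absorbs the remainder into $M$. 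A symmetric argument with $t\to-\infty$ covers $x<0$. The main potential obstacle is that the crude estimate $|\wh{h^1_{\psi,\psi,x}}(t)|\le C(1+x^2)$ grows quadratically in the shift $x$, which would defeat any attempt to write $A\psi(x)-A\psi(0)$ in one stroke; the whole point of restricting the increment to unit length (via $x=1$ or $s\in[0,1]$) before telescoping is to keep that shift parameter bounded, rendering the quadratic blow-up harmless and pinning down a linear rather than superlinear growth rate.
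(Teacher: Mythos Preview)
Your argument is correct and follows essentially the same route as the paper: use the increment identity from Lemma \ref{lemma-ext} with unit shift to get $(1\pm\eps)J(\psi)\s^{-2}$ bounds on $A\psi(t+1)-A\psi(t)$ for $|t|$ large, telescope, and absorb the remainder into $M$ via continuity on a compact interval. The only cosmetic difference is how you handle the fractional part: the paper starts the telescoping at $x-[x-K]\in[K,K+1]$ (where $|A\psi|\le M'$ directly by continuity) and steps forward in integer increments, whereas you telescope from $T$ to $T+n$ and then bound the last partial step $A\psi(T+n+s)-A\psi(T+n)$ separately via the identity with $s\in[0,1)$. Both bookkeepings work; your closing remark about why restricting to unit shifts neutralizes the quadratic growth in \eqref{eq-bound-h} is exactly the point.
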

\begin{proof}
  If $J(\psi)=0$ the previous lemma guarantees that $A\psi$ is continuous and has limit at infinity. Hence $A\psi$ is bounded.

  Suppose $J(\psi)>0$ and fix $\eps>0$. By the previous Lemma there is $K>0$ such that for all $y>K$
  $$ (1-\eps)J(\psi)\s^{-2}<A\psi(y+1)-A\psi(y) <(1+\eps)J(\psi)\s^{-2}$$
  and for $y<-K$
  $$ (1-\eps)J(\psi)\s^{-2}<A\psi(y-1)-A\psi(y) <(1+\eps)J(\psi)\s^{-2}.$$
Since $A\psi$ is continuous,
$$\sup_{|x|\leq K+1}|A\psi(x)|=M'<\infty .$$
Set $M=M'+(1+\eps)J(\psi)\s^{-2}(K+1)$
then the bound holds for $|x|\leq K$.

For $x>K$ let $[x-K]$ be the integer part of $x-K$ then, since
$$A\psi(x)-A\psi(x-[x-K])=\sum_{i=0}^{[x-K]-1}\Big(A\psi(x-[x-K]+ i+1))-A\psi(x-[x-K]+i)\Big)$$
and  $x-[x-K]+i>K$ then
$$ (1-\eps)J(\psi)\s^{-2}[x-K]<A\psi(x)-A\psi(x-[x-K])<(1+\eps)J(\psi)\s^{-2}[x-K].$$
Thus
\begin{align*}
A\psi(x)&=A\psi(x-[x-K])+A\psi(x)-A\psi(x-[x-K])<M'+(1+\eps)J(\psi)\s^{-2}[x-K]
\\&<M+(1+\eps)J(\psi)\s^{-2}x
\end{align*}
and
\begin{align*} A\psi(x)&=A\psi(x-[x-K])+A\psi(x)-A\psi(x-[x-K])>-M'+(1-\eps)J(\psi)\s^{-2}[x-K]
\\&>-M+(1-\eps)J(\psi)\s^{-2}x.
\end{align*}
In the same way we prove the bound for $x<-K$.
\end{proof}

\begin{proof}[Proof of Theorem \ref{ps}]
In view of Lemma \ref{lemma-ext} the potential $A\psi$ is well
defined. To prove that $A\psi$ is a solution of the Poisson
equation observe that
$$\ov\mu*A^\l\psi=c_\l  J(\psi)-\sum_{n=0}^\infty\l^n\ov\mu^{*n+1}*\psi=A^\l\psi+G^\l*(\psi-\ov\mu*\psi).$$
Notice that
$$G^\l*(\psi-\ov\mu*\psi)(x)=\frac 1{2\pi}\int_\R \e^{ix\th}\wh\psi(-\th)\frac{1-\wh{\ov\mu}(\th)}{1-\l\wh{\ov\mu}(\th)}d\th,$$
and, by (\ref{j1}), the integrand is dominated by $2|\wh\psi|\in
L^1(d\th)$ for all $1/2<\l\leq 1$. Therefore,  by Lebesgue's
dominated converge theorem
$$\lim_{\l\nearrow1}\ov\mu*A^\l\psi=A\psi +\psi.$$

By Lemma \ref{lemma-ext} there exists $C$ such  for all $\l$,  $x\in \R$ and any fixed $x$ :
$$|A^\l\psi(x+y)|\le C(1+(x+y)^2)\in L^1(\ov\mu(dy)),$$
then by dominate convergence
$$\ov\mu*A\psi(x)=\int_\R\lim_{\l\nearrow 1}A^\l\psi(x+y)\ov\mu(dy)=\lim_{\l\nearrow 1}\int_\R A^\l\psi(x+y)\ov\mu(dy)
=A\psi(x)+\psi(x).$$

The limit behavior is a direct consequence of Lemmas \ref{lemma-ext} and  \ref{lem-bound-A }.
\end{proof}

\begin{proof}[Proof of Corollary \ref{ps1}] First suppose $J(\psi)=0$.
Assume that $f$ is a continuous   solution of the Poisson
equation. Since
\begin{eqnarray*}
\ov\mu* f &=& f+\psi,\\
\ov\mu* A\psi &=& A\psi+\psi,
\end{eqnarray*}
the function $h=f-A\psi$ is $\ov\mu$-harmonic. It is bounded from
below because both   $-A\psi$ and $f$ are bounded from below.
Therefore by the Choquet-Deny theorem \cite{D} one has $h(x+y)=
h(x)$ for all $y$ in the closed subgroup generated by the support
of $\ov\mu$.
\medskip

Conversely, suppose that there exists a bounded solution $f_0$ of
the Poisson equation. Then $A\psi-f_0$ is $\ov\mu$-harmonic and
bounded from below, and so the Choquet-Deny theorem implies that
$A\psi$ is bounded. Thus
$$\lim_{x\to\infty}\frac{A\psi(x)}{x}=0$$
 and  by \eqref{eq-lim-A/x},
  we deduce $J(\psi)=0$.
\end{proof}

\begin{proof}[Proof of Corollary \ref{cor-form-solution}]
We apply a results of \cite{Ba}. First observe that the measure
$\ov\mu$ is strictly aperiodic (in the sense of \cite{Ba}) on
$G(\ov\mu)$. Consider the function
$$h(x) =\left(\frac{\sin x}{x}\right)^4.$$ Clearly,
$$\wh h(x)=C_1 {\bf 1}_{[-1,1]}*{\bf 1}_{[-1,1]}*{\bf 1}_{[-1,1]}*{\bf 1}_{[-1,1]}(x);$$
so it compactly supported. Thus the function $h$ belongs to the
class of functions $\mathcal{F}$ defined in \cite{Ba} (see also
\cite{PS2}).

Let $m$ be the Haar measure of the $G(\ov\mu)$, that is $m$ is either the Lebesgue measure on $\R$ or the counting measure. Then
$$\int_{G(\ov\mu)} |A\psi(x) h(x)| m(dx)\leq C \int_{G(\ov\mu)} (1+|x|) h(x) m(dx)<+\infty.$$
In view of Corollary 2 of \cite{Ba}, we conclude.
\end{proof}

\begin{proof}[Proof of Lemma \ref{phi0}]
To prove the first part of the Lemma it is enough to establish the following formula
\begin{equation}
\label{lh1}
 r (x) = \left\{
\begin{array}{ccc}
-2\E[(Y+x){\bf 1}_{Y+x\le 0}]&\ \mbox{ for }& x\ge 0\\
2\E[(Y+x){\bf 1}_{Y+x>0}]&\ \mbox{ for }& x < 0\\
\end{array}
\right.
\end{equation}
Indeed, since $\E Y=0$, for $x\ge 0$ we write
\begin{multline*}
 r (x) = \E[(Y+x){\bf 1}_{Y+x\geq 0}-(Y+x){\bf 1}_{Y+x< 0}-x]\\
 =\E[(Y+x) - 2(Y+x){\bf 1}_{Y+x< 0}-x]
  =-2  \E[(Y+x){\bf 1}_{Y+x < 0}].
\end{multline*}
For $x<0$ we proceed exactly in the same way. By \eqref{lh1}, the
function $ r $ is nonnegative.
\medskip

The Fourier transform can be computed in distribution sense. Let $a(x) = |x|$,
then  $ r  = (\ov\mu-\d_0)*a$ and $\wh a(\th) = \frac C{\th^2}$, hence $\wh r(\th)=C\cdot \frac{\wh{\ov\mu}-1}{\th^2}$.
\medskip

To prove \eqref{as1}, by \eqref{lh1}, for $x\ge 0$ we write
%\begin{eqnarray*}
$$| r (x)| = 2\E[|Y+x| {\bf 1}_{Y+x<0}] = 2\int_{x+y<0}|x+y|\ov\mu(dy) \\
\le 2\int_\R |x+y|\e^{-\d_0(x+y)}\ov\mu(dy)\le C \e^{-\d_1 x},
$$%\end{eqnarray*}
for some constants $\d_1<\d_0<\d$.

\medskip

Assume now $\E|Y|^{4+\eps}<\8$. Then for $t>0$ we have
\begin{eqnarray*}
|r (x)| &=& 2\int_{y<-x} |y+x|\ov\mu(dy)
= 2\cdot \sum_{m=1}^\8 \int_{-(m+1)x\le y < -mx}|y+x|\ov\mu(dy)\\
&\le& 2\cdot \sum_{m=1}^\8 mx \int_{|y|>mx} \ov\mu(dy) \le
2\cdot \sum_{m=1}^\8 mx \int_\R \frac{|y|^{\chi}}{m^{4+\eps}x^{4+\eps}}\ov\mu(dy) \le \frac C{x^{3+\eps}}.
\end{eqnarray*}

It is clear that if $\E|Y|^{4+\eps}<\8$ then $r\in \mathcal{F}(\ov\mu)$.
If $\psi=r*\zeta$ with $\zeta$ and $x^2\zeta$ in $L^1(\R)$ then  it is easily checked that both $\psi$ and $x^2\psi$ are integrable. Since
$\wh \psi=\wh r \wh \zeta=C \frac{\wh{\ov\mu}-1}{\th^2}\wh \zeta$
and $\wh \zeta$ vanish at infinity then $\psi\in \mathcal{F}(\ov\mu)$
\end{proof}

\bibliographystyle{alpha}
\newcommand{\etalchar}[1]{$^{#1}$}

\end{document}